\DeclareFontFamily{U}{shuffle}{}
\DeclareFontShape{U}{shuffle}{m}{n}{ <-8>shuffle7 <8->shuffle10}{}
\newcommand{\si}{\sigma}
\newcommand\Res{{\rm Res}}
\newcommand{\bfk}{{\boldsymbol{\sl{k}}}}
\newcommand{\bfx}{{\boldsymbol{\sl{x}}}}
\def\int{\displaystyle\!int}
\def\lim{\displaystyle\!lim}
\def\sum{\displaystyle\!sum}
\def\sup{\displaystyle\!sup}
\def\inf{\displaystyle\!inf}
\def\cap{\displaystyle\!cap}
\def\max{\displaystyle\!max}
\def\min{\displaystyle\!min}
\def\frac{\displaystyle\!frac}
\let\oldsection\section
\renewcommand\section{\setcounter{equation}{0}\oldsection}
\DeclareMathOperator{\Li}{Li}
\DeclareMathOperator{\ti}{ti}
\DeclareMathOperator{\Ti}{Ti}
\DeclareMathOperator{\Si}{Si}
\DeclareMathOperator{\Ri}{Ri}
\def\N{\mathbb{N}}
\def\Z{\mathbb{Z}}
\def\Q{\mathbb{Q}}
\def\S{\widetilde{S}}
\theoremstyle{plain}
\newtheorem{thm}{Theorem}[section]
\newtheorem{lem}[thm]{Lemma}
\newtheorem{cor}[thm]{Corollary}
\newtheorem{con}[thm]{Conjecture}
\theoremstyle{definition}
\newtheorem{re}[thm]{Remark}
\newtheorem{exa}[thm]{Example}
\begin{document}
\title{\bf The Parity of Two Types of Cyclotomic Euler Sums via Contour Integrals}
\author{
{Ce Xu\thanks{Email: cexu2020@ahnu.edu.cn}}\\[1mm]
\small School of Mathematics and Statistics, Anhui Normal University,\\ \small Wuhu 241002, P.R. China
}
\date{}
\maketitle
\noindent{\bf Abstract.} In this paper, we employ methods of contour integration and residue calculus to investigate the parity of two classes of cyclotomic Euler-type sums. One class involves products of cyclotomic harmonic numbers, while the other involves products of cyclotomic odd harmonic numbers. We derive explicit formulas for the parity of linear and quadratic cases of these cyclotomic Euler-type sums and provide several illustrative examples. The results for the linear and quadratic cases ensure that we can provide explicit formulas for the parity of cyclotomic multiple $T$-values and cyclotomic multiple $S$-values up to depth three, both of which are even-odd variants of cyclotomic multiple zeta values. Furthermore, we present declarative theorems concerning the parity of these two types of cyclotomic Euler-type sums to arbitrary orders. Additionally, using contour integration techniques, we explore explicit linear and quadratic formulas for these cyclotomic Euler-type sums under more general conditions.

\medskip

\noindent{\bf Keywords}: Cyclotomic Euler-type sums; Contour integrals; Residue calculus; Parity; Cyclotomic multiple zeta values; Cyclotomic multiple $S/T$-values; Even-odd variants.
\medskip

\noindent{\bf AMS Subject Classifications (2020):} 11M32, 11M99.

\section{Introduction}
As is well known, the method of contour integration and residue computation in the complex plane is a highly effective approach for studying closed-form expressions of infinite series. Many classical problems involving infinite series can be proven using residue calculations. For example, the rational expansion of the classical inverse tangent function $\cot(z)$ can be demonstrated by considering the integral of the function $\cot(z) - 1/z$ along a square contour centered at the origin with side length $(n + 1/2)\pi\ (n\in\N)$, computing its residues, and taking the limit as $n$ approaches infinity. This yields:
\begin{align*}
\cot(z)=\frac1{z}+\sum_{n=1}^\infty \frac{2z}{z^2-n^2\pi^2}\quad (n\notin \Z\pi).
\end{align*}
Furthermore, according to \emph{Euler's formula} $e^{iz}=\cos(z)+i\sin(z)\ (i^2=-1,z\in \mathbb{C})$ and the definition of \emph{Bernoulli numbers} $\frac{x}{e^x-1}=\sum_{n=0}^{\infty}{\frac{B_n}{n!}x^n}$, the power series expansion of the inverse tangent function can be derived as follows (\cite{A2000}):
\begin{align*}
\frac{z}{2}\cot\frac{z}{2}=\frac{iz}{2}\frac{e^{iz/2}+e^{-iz/2}}{e^{iz/2}-e^{-iz/2}}=1-\sum_{n=1}^\infty (-1)^{n-1}\frac{B_{2n}}{(2n)!}z^{2n}.
\end{align*}
By combining the two expansions mentioned above, the rational fraction expansion of the inverse tangent function is first transformed into the form of a power series. Then, based on the uniqueness of power series, the closed-form expression for even zeta values (first discovered by Euler) is derived by equating the coefficients of any power of the variable $z$ in the two power series (see \cite[Thm. 1.2.4]{A2000}):
\begin{align}\label{Riemannevenzetaformula}
\zeta(2k)=\frac{(-1)^{k-1}B_{2k}(2\pi)^{2k}}{2(2k)!}\quad (k\in\N),
\end{align}
where $\zeta(s)$ denotes the \emph{Riemann zeta function} defined by
$$\zeta(s):=\sum_{n=1}^{\infty}{\frac{1}{n^s}}\quad(\Re(s)>1).$$
After discovering the formula for even zeta values, Euler, in his correspondence with Goldbach between 1742 and 1743, also investigated the relationship between the following double series
$S_{p;q}$ (now known as \emph{linear Euler sums} or \emph{double zeta star values}) and Riemann zeta values (see \cite{Euler1776} ). The series $S_{p;q}$ is defined as:
\begin{align}\label{defnlinearEulerSums}
S_{p;q}:=\sum_{n=1}^\infty \frac{H_n^{(p)}}{n^q}\quad (p,q\in \N\ \text{and}\ q>1),
\end{align}
where $H_n^{(p)}$ stands the {\emph{generalized harmonic number}} of order $p$ defined by
\[H_n^{(p)}:=\sum_{k=1}^n \frac{1}{k^p}.\]
Euler elaborated a method to show that the double series $S_{p;q}$ can be evaluated in terms of zeta values in the following cases: $p=1$, $p=q$, $(p,q)=(2,4),(4,2)$ and $p+q\leq 13$ odd. Euler even conjectured that all $S_{p;q}$ when $p+q$ is odd can be expressed in terms of a $\mathbb{Q}$-coefficient combination of zeta values, a result that was proven by Nielsen \cite{Nielsen1906} in 1906. Following Nielsen's work, numerous experts and scholars have provided different proofs of this conjecture. For instance, in 1998, Flajolet and Salvy \cite[Thm. 3.1]{Flajolet-Salvy} also demonstrated the conjecture using the method of contour integration and derived an explicit formula. Flajolet and Salvy not only considered the linear Euler sums $S_{p;q}$, but also systematically investigated the parity of \emph{generalized Euler sums} of the following form:
\begin{align}\label{classicalESdefn}
{S_{{p_1p_2\cdots p_k};q}} := \sum\limits_{n = 1}^\infty  {\frac{{H_n^{\left( {{p_1}} \right)}H_n^{\left( {{p_2}} \right)} \cdots H_n^{\left( {{p_k}} \right)}}}
{{{n^{q}}}}},
\end{align}
where $p_j\in \N\ (j=1,2,\ldots,k)$ and $q\geq 2$ with $1\leq p_1\leq p_2\leq \cdots \leq p_k$. The quantity $p_1+\cdots+p_k+q$ is called the ``weight" of the sum, and the quantity $k$ is called the ``degree (or order)". When $k\geq 2$, it is referred to as a \emph{nonlinear Euler sum}. They proved the parity theorem for generalized Euler sums by considering the residue computation of the contour integral $\oint_{(\infty)} f(s) ds$ of an integrand composed of trigonometric functions, the digamma function and its derivatives, and rational functions. Additionally, they provided general explicit formulae for the parity of linear and quadratic Euler sums, as well as some explicit formulas for cubic and quartic Euler sums. The parity theorem they established is stated as follows (see \cite[Thm. 5.3]{Flajolet-Salvy}): \emph{When the weight $p_1+\cdots+p_k+q$ and the depth $k$ are of the same parity, ${S_{{p_1p_2\cdots p_k};q}}$ is a $\Q$-linear combination of Euler sums of order at most $k-1$.} Here
\[\oint_{(\infty)} f(s) ds:=\lim_{R\rightarrow \infty}\oint_{|s|=R} r(s)\xi(s) ds =0,\]
where $|s|=R$ denote a circular contour with radius $R$. The $\xi(s)$ is a type of kernel function composed of trigonometric functions, the digamma function and its derivatives, while $r(s)$ is a rational function, typically taken to be $r(s)=1/s^q\ (q\in \N)$. The kernel function $\xi(s)$ is defined by the following two conditions: 1). \emph{$\xi(s)$ is meromorphic in the whole complex plane.} 2). \emph{$\xi(s)$ satisfies $\xi(s)=o(s)$ over an infinite collection of circles $\left| s \right| = {\rho _k}$ with ${\rho _k} \to \infty $.} In a series of papers \cite{Xu-Wang2020,Xu-Wang2022,Xu-Wang2023} by the authors of this article and Wang, also building upon the theory developed by Flajolet and Salvy, we investigated the parity of two other types of Euler sums, denoted as $T_{p_1,p_2,\ldots,p_k,q}^{\si_1,\si_2,\ldots,\si_k,\si}$ (called \emph{alternating Euler $T$-sums}) and $\S_{p_1,p_2,\ldots,p_k,q}^{\si_1,\si_2,\ldots,\si_k,\si}$ (called \emph{alternating Euler $\tilde{S}$-sums}). These two types of Euler sums are defined as:
\begin{align}
&T_{p_1,p_2,\ldots,p_k,q}^{\si_1,\si_2,\ldots,\si_k,\si}
    =\sum_{n=1}^\infty\si^{n-1}\frac{h_{n-1}^{(p_1)}(\si_1)h_{n-1}^{(p_2)}(\si_2)\cdots
        h_{n-1}^{(p_k)}(\si_k)}{(n-1/2)^q}\,,\label{Tsum.Unify}\\
&\S_{p_1,p_2,\ldots,p_k,q}^{\si_1,\si_2,\ldots,\si_k,\si}
    =\sum_{n=1}^\infty\si^{n-1}\frac{h_n^{(p_1)}(\si_1)h_n^{(p_2)}(\si_2)\cdots
        h_n^{(p_k)}(\si_k)}{n^q}\,,\label{Stsum.Unify}
\end{align}
where $(p_1,p_2,\ldots,p_k,q)\in\mathbb{N}^{k+1}$ and $(\si_1,\si_2,\ldots,\si_k,\si)\in\{\pm 1\}^{k+1}$ with $(q,\si)\neq (1,1)$. The $h_n^{(p)}(\si)$ denotes the \emph{(alternating) odd harmonic number} defined by
\begin{align*}
h_n^{(p)}(\si):=\sum_{k=1}^n \frac{\si^k}{(k-1/2)^p}.
\end{align*}
Research on classical Euler sums and other related sums has also been extensive. Some relevant findings in this area can be found in \cite{AC2017,AC2020,ChenEie2006,DilBoyadzhiev2015,MoZhou2025,SiXin2021,Sofo2018,WangLyu2018,ZhengYang2025,ZhengYang2025(1)}.

In two very recent papers \cite{Rui-Xu2025,Wang-Xu2025}, the authors of this paper, together with Rui and Wang, investigated the parity of cyclotomic Euler sums and cyclotomic Euler $T$-sums by considering contour integrals of integrands composed of generalized digamma functions, extended trigonometric functions, and rational functions. Here, the \emph{cyclotomic Euler sums} and \emph{cyclotomic Euler $T$-sums} are defined as:
\begin{align}
&S_{p_1,\ldots, p_k;q}(x_1,\ldots,x_k;x):=\sum_{n=1}^\infty \frac{\zeta_{n}(p_1;x_1)\zeta_{n}(p_2;x_2)\cdots \zeta_{n}(p_k;x_k)}{n^q}x^n,\\
&T_{p_1,\ldots, p_k;q}(x_1,\ldots,x_k;x):=\sum_{n=1}^\infty \frac{t_{n}(p_1;x_1)t_{n}(p_2;x_2)\cdots t_{n}(p_k;x_k)}{(n-1/2)^q}x^n,
\end{align}
where $p_1,\ldots,p_k,q\in \N$ and $x_1,\ldots,x_k,x$ are all roots of unity with $(q,x)\neq (1,1)$. Similar to classical Euler sums, we refer to the quantity $p_1+\cdots+p_k+q$ as the ``weight" of the sum, and the quantity $k$ as the ``order". If $k=0$, we denote $S_{\emptyset;q}(\emptyset;x):=\Li_q(x)$ and $T_{\emptyset;q}(\emptyset;x):=\ti_q(x)$. Here $\zeta_n(p;x)$ and $t_n(p;x)$ denote the finite sum of polylogarithm function and $t$-polylogarithm function, respectively, defined by
\begin{align}
\zeta_n(p;x):=\sum_{k=1}^n \frac{x^k}{k^p}\quad\text{and}\quad t_n(p;x):=\sum_{k=1}^n \frac{x^k}{(k-1/2)^p} \quad (p\in \N),
\end{align}
and the \emph{polylogarithm function} $\Li_p(x)$ and \emph{$t$-polylogarithm function} $\ti_p(x)$ are defined by
\begin{align}
&\Li_{p}(x):=\lim_{n\rightarrow \infty}\zeta_n(p;x)= \sum_{n=1}^\infty \frac{x^n}{n^p}\quad (|x|\leq 1,\ (p,x)\neq (1,1),\ p\in \N),\\
&\ti_{p}(x):=\lim_{n\rightarrow \infty}t_n(p;x)= \sum_{n=1}^\infty \frac{x^n}{(n-1/2)^p}\quad (|x|\leq 1,\ (p,x)\neq (1,1),\ p\in \N).
\end{align}
Moreover, due to the close connections between cyclotomic Euler sums, cyclotomic Euler $T$-sums, cyclotomic multiple zeta values, and cyclotomic multiple $t$-values, we further obtained some parity results and conjectures regarding cyclotomic multiple zeta values and cyclotomic multiple $t$-values in \cite{Rui-Xu2025,Wang-Xu2025}. For $\bfk=(k_1,\ldots,k_r)\in \N^r$ and $\bfx=(x_1,\ldots,x_r)$ (all $x_j$ are roots of unity) with $(k_r,x_r)\neq (1,1)$, the \emph{cyclotomic multiple zeta value} (\cite{YuanZh2014a,Zhao2007d,Zhao2010}) and \emph{cyclotomic multiple $t$-value} are defined by
\begin{align}
&\Li_{\bfk}(\bfx):=\sum_{0<n_1<\cdots<n_r} \frac{x_1^{n_1}\dotsm x_r^{n_r}}{n_1^{k_1}\dotsm n_r^{k_r}},\\
&\ti_{\bfk}(\bfx):=\sum_{0<n_1<\cdots<n_r} \frac{x_1^{n_1}\cdots x_r^{n_r}}{(n_1-1/2)^{k_1}\cdots (n_r-1/2)^{k_r}}.
\end{align}
If the terms $x_1,\ldots,x_r$ in the two expressions above are all $N$-th roots of unity, they are referred to as level $N$ cyclotomic multiple zeta values and level $N$ cyclotomic multiple $t$-values, respectively.
Cyclotomic multiple zeta values and cyclotomic multiple $t$-values are direct generalizations of the \emph{classical multiple zeta values}, which are defined as:
\begin{align*}
\zeta(\bfk)\equiv \zeta(k_1,\ldots,k_r):=\sum_{0<n_1<\cdots<n_r} \frac{1}{n_1^{k_1}\cdots n_r^{k_r}},
\end{align*}
where $k_1,\ldots,k_r$ are positive integers and $k_r\geq 2$. The $r$ and $k_1+\cdots+k_r$ are called the \emph{depth} and \emph{weight}, respectively. Multiple zeta values were independently introduced by Hoffman \cite{H1992} and Zagier \cite{DZ1994} in the 1990s. Over the past three decades, research on various properties of multiple zeta values has been extensive. For a detailed background and research results on multiple zeta values, refer to Zhao's monograph \cite{Z2016}. The classical multiple $t$-values were defined by Hoffman in 2018, see \cite{H2019}. For some recent research work on multiple $t$-values, refer to \cite{KomatsuLuca2025,LiSong2023,LiYan2023,LiYan2025,Zhao2015,Z2024} and references therein.
It is easily known from the stuffle relations that cyclotomic Euler sums can be expressed as $\Z$-linear combinations of cyclotomic multiple zeta values, while cyclotomic multiple $T$-values can be expressed as $\Z$-linear combinations of cyclotomic multiple $t$-values.

In this paper, we define the following two classes of Hurwitz-type cyclotomic Euler sums (respectively referred to as \emph{cyclotomic Euler $R$-sums} and \emph{cyclotomic Euler $\tilde{S}$-sums}):
\begin{align}
&R_{p_1,p_2,\ldots,p_k;q}(x_1,x_2,\ldots,x_k;x):=\sum_{n=0}^\infty \frac{\zeta_{n}(p_1;x_1)\zeta_{n}(p_2;x_2)\cdots \zeta_{n}(p_k;x_k)}{(n+1/2)^q}x^n,\label{defnS-ES1}\\
&\tilde{S}_{p_1,\ldots, p_k;q}(x_1,\ldots,x_k;x):=\sum_{n=1}^\infty \frac{t_{n}(p_1;x_1)t_{n}(p_2;x_2)\cdots t_{n}(p_k;x_k)}{n^q}x^n,\label{defnS-ES2}
\end{align}
where $p_1,\ldots,p_k,q\in \N$ and $x_1,\ldots,x_k,x$ are all roots of unity with $(q,x)\neq (1,1)$. In particular, if $k=0$ in \eqref{defnS-ES1} and \eqref{defnS-ES2}, then we let $R_{\emptyset;q}(\emptyset;x)=x^{-1}\ti_q(x)$ and $\tilde{S}_{\emptyset;q}(\emptyset;x):=\Li_q(x)$. The primary objective of this paper is to investigate the parity properties of cyclotomic Euler $R$-sums and cyclotomic Euler $\tilde{S}$-sums using the method of contour integration. Explicit formulas for the parity of linear and quadratic cyclotomic Euler $R$-sums and $S$-sums are derived. Based on the connections between linear and quadratic cyclotomic Euler $R$-sums, $\tilde{S}$-sums, and cyclotomic double and triple $S$-values and $T$-values, parity results for cyclotomic multiple $S$-values and $T$-values up to depth three are obtained. Finally, the method of contour integration is employed to provide several residue formulas for linear and quadratic cyclotomic Euler $R$-sums and $S$-sums in general forms. These cyclotomic multiple $S$-values and cyclotomic multiple $T$-values can be regarded as even-odd variants of cyclotomic multiple zeta values, defined respectively as:
\begin{align}
&\Si_{k_1,\ldots,k_r}(x_1,\ldots,x_r):=2^r\sum_{0<n_1<n_2<\cdots<n_r} \frac{x_1^{n_1}x_2^{n_2}\cdots x_r^{n_r}}{(2n_1)^{k_1}(2n_2-1)^{k_2}\cdots (2n_r-r+1)^{k_r}},\label{defn-CMSV}\\
&\Ti_{k_1,\ldots,k_r}(x_1,\ldots,x_r):=2^r\sum_{0<n_1<n_2<\cdots<n_r} \frac{x_1^{n_1}x_2^{n_2}\cdots x_r^{n_r}}{(2n_1-1)^{k_1}(2n_2-2)^{k_2}\cdots (2n_r-r)^{k_r}},\label{defn-CMTV}
\end{align}
where $k_1,\ldots,k_r\in \N$ and $x_1,\ldots,x_r$ are all roots of unity with $(k_r,x_r)\neq (1,1)$. The classical multiple $T$-values were defined by Kaneko-Tsumura in 2020, see \cite{KanekoTs2019}. The main results of this paper are the following two theorems (also see Theorems \ref{thm-parityc-CERS} and \ref{thm-parityc-CESS}):
\begin{thm}\label{thm-parityc-CERSmain} Let $x,x_1,\ldots,x_r$ be roots of unity, and $p_1,\ldots,p_r,q\geq 1$ with $(p_j,x_j)\neq (1,1)\ (j=1,2,\ldots,r)$ and $ (q,x)\neq (1,1)$. The
\begin{align*}
&xR_{p_1,\ldots,p_r;q}\Big(x_1,\ldots,x_r;x\Big)+(-1)^{p_1+\cdots+p_r+q+r}R_{p_1,\ldots,p_r;q}\Big(x_1^{-1},\ldots,x_r^{-1};x^{-1}\Big)
\end{align*}
can be expressed in terms of a rational combination of the products of cyclotomic Euler sums and cyclotomic Euler $R$-sums with order $<r$.
\end{thm}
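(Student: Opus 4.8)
The plan is to prove the parity theorem by the contour-integration/residue method that Flajolet--Salvy pioneered and that the paper's earlier cited works (\cite{Rui-Xu2025,Wang-Xu2025}) adapt to the cyclotomic setting. First I would set up the kernel. Since the $R$-sum \eqref{defnS-ES1} has denominators $(n+1/2)^q$ and the inner factors $\zeta_n(p_j;x_j)$ built from $x_j^k/k^{p_j}$, I would take the rational factor $r(s)=1/(s+1/2)^q$ (or equivalently center the half-integer shift into a suitable cotangent-type kernel) and multiply by a product of kernel functions $\xi_j(s)$ whose residues at the integers $s=n$ reproduce $x_j^n\zeta_n(p_j;x_j)$ and whose residues at negative integers reproduce the ``reflected'' sum with $x_j\mapsto x_j^{-1}$. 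Concretely, each $\xi_j$ is built from a generalized (Hurwitz/Lerch) digamma function $\psi(s;x_j)$ and its derivatives together with an extended trigonometric kernel that supplies the weight $x^n$ from the outer variable; the key design requirement is that $\xi(s)=o(s)$ along a sequence of expanding circles $|s|=\rho_k$, exactly the second defining condition of the kernel stated in the Introduction, so that $\oint_{(\infty)} r(s)\xi(s)\,ds=0$.

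Next, granting $\oint_{(\infty)}=0$, I would organize the finite residue sum into three groups according to the location of the poles: (i) residues at the nonnegative integers $s=n\ge 0$, which by the pole structure of $\psi(s;x_j)$ and of $r(s)$ produce exactly $x\,R_{p_1,\ldots,p_r;q}(x_1,\ldots,x_r;x)$; (ii) residues at the negative integers $s=-n$, which reflect each $x_j\mapsto x_j^{-1}$ and $x\mapsto x^{-1}$ and, after collecting the sign coming from $r(-s)=(-1)^q/(s-1/2)^q$ together with the $r$ sign changes from the $\xi_j$ and the overall kernel reflection, produce $(-1)^{p_1+\cdots+p_r+q+r}R_{p_1,\ldots,p_r;q}(x_1^{-1},\ldots,x_r^{-1};x^{-1})$ --- this is precisely the combination appearing in the theorem, and the verification that the total sign is $(-1)^{p_1+\cdots+p_r+q+r}$ is the bookkeeping that the parity hypothesis is encoded in; (iii) a finite collection of ``extra'' residues at the half-integer poles $s=-1/2$ (of order $q$, coming from $r(s)$) and at the finite poles of the kernel, together with the poles where two or more of the $\xi_j$ collide. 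The vanishing of $\oint_{(\infty)}$ then gives the identity
\begin{align*}
xR_{p_1,\ldots,p_r;q}\Big(x_1,\ldots,x_r;x\Big)+(-1)^{p_1+\cdots+p_r+q+r}R_{p_1,\ldots,p_r;q}\Big(x_1^{-1},\ldots,x_r^{-1};x^{-1}\Big)=-\sum(\text{group (iii) residues}).
\end{align*}

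The main obstacle, and the step I would spend the most care on, is establishing that the group (iii) residues reduce to a rational combination of products of cyclotomic Euler sums and cyclotomic Euler $R$-sums of order strictly less than $r$. The high-order residue at $s=-1/2$ expands $r(s)=1/(s+1/2)^q$ against the Taylor coefficients of the product $\prod_j\psi(s;x_j)$-type kernel at $s=-1/2$; those Taylor coefficients are finite values $\ti_{p}(x_j)$ and their higher derivatives, i.e.\ polylogarithmic constants of weight $\le p_j$, so each such term is a genuine constant times a lower-order object. The genuinely delicate part is the collision poles: when the kernel factors $\xi_{j_1},\ldots,\xi_{j_m}$ share a pole at the same integer, the residue of the product is a derivative-convolution that, upon resummation over that integer, yields a product in which at least one $\zeta_n(\cdot)$-factor has been absorbed into the rational/constant part, thereby dropping the order by at least one. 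I would handle this by the standard Leibniz expansion of the residue of a product of meromorphic functions and then recognize each resulting sum as a lower-order cyclotomic Euler sum or $R$-sum, appealing to the defining expansions \eqref{defnS-ES1} and the finite-sum identities for $\zeta_n(p;x)$; the parity hypothesis $(q,x)\neq(1,1)$ and $(p_j,x_j)\neq(1,1)$ guarantees convergence of every intermediate series and the absence of the excluded divergent corner, so no regularization is needed.
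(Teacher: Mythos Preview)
Your approach is essentially the same as the paper's: it proves the theorem via the contour integral
\[
\oint_{(\infty)}\frac{\Phi(s;x)\,\phi^{(p_1-1)}(s;x_1)\cdots\phi^{(p_r-1)}(s;x_r)}{(p_1-1)!\cdots(p_r-1)!\,(s-1/2)^q}\,(-1)^{p_1+\cdots+p_r-r}\,ds=0,
\]
sorting the residues into those at positive integers (simple poles of $\Phi$), at non-positive integers (poles of order $p_1+\cdots+p_r+1$), and at $s=1/2$ (order $q$), and then observing that the first two groups produce the two $R$-sums plus lower-order debris while the third is purely lower order.

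Two small corrections to your write-up. First, the rational factor should be $1/(s-1/2)^q$, not $1/(s+1/2)^q$: with your choice the positive-integer residues involve $\zeta_{n-1}(p_j;x_j)/(n+1/2)^q$, which is off by one from the definition \eqref{defnS-ES1} and does not telescope into an $R$-sum without an extra shift. Second, there are no separate ``collision poles'': every $\phi^{(p_j-1)}(s;x_j)$ has its poles precisely at the non-positive integers, so these are already your group (ii). The lower-order terms do not come from a distinct set of poles but rather from (a) expanding the product $\prod_j\bigl(\Li_{p_j}(x_j)-\zeta_{n-1}(p_j;x_j)\bigr)$ at positive $n$ and (b) the subleading Laurent coefficients in the Leibniz expansion at $s=-n\le 0$; your groups (i) and (ii) therefore do \emph{not} produce ``exactly'' the two $R$-sums, but the $R$-sums plus lower-order pieces. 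Once you track this correctly the argument goes through as you describe.
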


\begin{thm}\label{thm-parityc-CE-SS} Let $x,x_1,\ldots,x_r$ be roots of unity, and $p_1,\ldots,p_r,q\geq 1$ with $(p_j,x_j)\neq (1,1)\ (j=1,2,\ldots,r)$ and $ (q,x)\neq (1,1)$. The
\begin{align*}
&(x_1\cdots x_r)^{-1}\tilde{S}_{p_1,\ldots,p_r;q}\Big(x_1,x_2,\ldots,x_r;x\Big)+(-1)^{p_1+\cdots+p_r+q+r}\tilde{S}_{p_1,\ldots,p_r;q}\Big(x_1^{-1},x_2^{-1},\ldots,x_r^{-1};x^{-1}\Big)
\end{align*}
reduces to a combination of cyclotomic Euler $\tilde{S}$-sums and cyclotomic Euler $R$-sums of lower orders.
\end{thm}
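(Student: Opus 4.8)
The plan is to follow the contour-integration scheme of Flajolet--Salvy as adapted to the cyclotomic setting in \cite{Rui-Xu2025,Wang-Xu2025}, realizing both terms on the left-hand side as the residues of a single meromorphic integrand at the positive and negative integers, and identifying every remaining residue as a lower-order object. The starting point is the elementary identity
\[
t_n(p;x)=\ti_p(x)-x^n\beta_p(n;x),\qquad \beta_p(s;x):=\sum_{k=1}^\infty\frac{x^k}{(k-1/2+s)^p},
\]
which exhibits the partial sum $t_n(p;x)$ as the value at $s=n$ of the meromorphic function $\lambda_p(s;x):=\ti_p(x)-x^s\beta_p(s;x)$, whose only poles lie at the negative half-integers $s=-(k-1/2)$, $k\ge 1$, each of order $p$. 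I would then form the kernel
\[
F(s)=\pi\cot(\pi s)\,\frac{x^s}{s^q}\prod_{j=1}^r\lambda_{p_j}(s;x_j)
\]
(more precisely, the level-$N$ extended trigonometric kernel of \cite{Rui-Xu2025,Wang-Xu2025} having the same residues at the integers but the decay required below) and invoke $\oint_{(\infty)}F(s)\,ds=0$, so that the sum of all residues of $F$ vanishes.

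The two main terms come from the integer poles. At $s=n\ge1$ the factor $\pi\cot(\pi s)$ contributes residue $1$, $\lambda_{p_j}(n;x_j)=t_n(p_j;x_j)$, and $x^s/s^q$ contributes $x^n/n^q$; summing over $n\ge1$ gives $\tilde S_{p_1,\dots,p_r;q}(x_1,\dots,x_r;x)$. For the negative integers one reindexes $\beta_p(-m;x)$ to obtain the reflection identity
\[
\lambda_p(-m;x)=(-1)^{p+1}x\,t_m(p;x^{-1})\qquad(m\ge1).
\]
Consequently the residue at $s=-m$ equals $(-1)^{p_1+\cdots+p_r+q+r}(x_1\cdots x_r)\,(x^{-1})^m m^{-q}\prod_j t_m(p_j;x_j^{-1})$, and summation over $m\ge1$ yields $(-1)^{p_1+\cdots+p_r+q+r}(x_1\cdots x_r)\,\tilde S_{p_1,\dots,p_r;q}(x_1^{-1},\dots,x_r^{-1};x^{-1})$. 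Dividing the vanishing residue identity by $x_1\cdots x_r$ therefore isolates exactly the combination appearing in the statement.

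It remains to show that every other residue has order $<r$. The pole at $s=0$ has order $q+1$, and its residue is the coefficient of $s^{-1}$ in $s\,\pi\cot(\pi s)\cdot x^s s^{-q}\prod_j\lambda_{p_j}(s;x_j)$; expanding the $\lambda_{p_j}$ in Taylor series about $s=0$ writes it as a rational combination of products of $\ti$- and $\Li$-values with $\tilde S$-sums of order $<r$. The poles at the negative half-integers $s=-(k-1/2)$ are the source of the $R$-sums: there every $\lambda_{p_j}$ is singular of order $p_j$ while $\pi\cot(\pi s)$ has a simple zero, so $F$ has a pole of order $\sum_j p_j-1$. Extracting the residue forces at least one factor to contribute its principal part (otherwise the product is regular), so at most $r-1$ factors may contribute regular values; since the regular value of $\lambda_p$ at $s=-(k-1/2)$ is, after reindexing, a rational multiple of $\ti_p(x)$ plus a term $x^k\zeta_{k-1}(p;x^{-1})$, while $1/s^q$ supplies the half-integer weight $(k-1/2)^{-q}$, the summation over $k\ge1$ reassembles these into cyclotomic Euler $R$-sums of order $\le r-1$ together with further lower-order $\tilde S$-sums. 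Collecting all contributions converts $\sum\Res F=0$ into the asserted reduction.

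The main obstacle is twofold. First is the growth control: because $x_j=e^{2\pi i\theta_j}$ makes $x_j^{s}$ and $\beta_{p_j}(s;x_j)$ grow exponentially in certain directions, the naive $\pi\cot(\pi s)$ kernel fails the condition $\xi(s)=o(s)$ on an expanding family of circles, and one must replace it by the extended trigonometric kernel of \cite{Rui-Xu2025,Wang-Xu2025}, whose exponential factors cancel this growth while preserving the integer residues; the boundary case $q=1$ needs separate regularization. Second is the residue bookkeeping at the high-order poles -- order $q+1$ at the origin and order $\sum_j p_j-1$ at each half-integer -- which amounts to multiplying Laurent expansions and then recognizing the resulting double series as $\tilde S$- and $R$-sums of strictly smaller order with root-of-unity coefficients. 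Verifying that no order-$r$ term survives outside the two main sums, which rests on the observation that a half-integer pole always consumes at least one $\lambda$-factor in its principal part, is the crux of the argument.
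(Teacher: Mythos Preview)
Your overall strategy---a single contour integral whose residues at the positive and at the negative integers produce the two order-$r$ $\tilde S$-terms, with every other residue of lower order---is precisely the paper's. The gap is in the specific integrand. Your interpolant $\lambda_p(s;x)=\ti_p(x)-x^s\beta_p(s;x)$, chosen so that $\lambda_p(n;x)=t_n(p;x)$, carries the factor $x^s$; for a non-real root of unity $x=e^{2\pi i\theta}$ one has $|x^s|=e^{-2\pi\theta\,\Im s}$, which blows up in one vertical direction. Replacing $\pi\cot(\pi s)\,x^s$ by the extended kernel $\Phi$ does \emph{not} cure this: $\Phi(s;x)=\phi(s;x)-\phi(-s;x^{-1})-1/s$ is only $O(\log|s|)$ on large circles and carries no exponential damping, and in any case the offending $x_j^{\,s}$ live inside your $\lambda_{p_j}$, not in the kernel. (Your aside that $\beta_{p_j}$ itself grows exponentially is also off: since $\beta_p(s;x)=x\,(-1)^{p-1}\phi^{(p-1)}(s+1/2;x)/(p-1)!$, it is $o(1)$ away from its poles.) So $\oint_{(\infty)}F(s)\,ds=0$ cannot be justified once any $x_j\notin\{\pm 1\}$.

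The paper's fix is structural rather than a matter of damping: it discards the $x^s$ factors entirely and takes as integrand
\[
\frac{\Phi(s;x')}{s^{q}}\prod_{j=1}^{r}\frac{(-1)^{p_j-1}\phi^{(p_j-1)}(s+1/2;x_j)}{(p_j-1)!}.
\]
Here the root-of-unity weight is supplied by $\Phi$, whose residue at each integer $n$ is $(x')^{-n}$, while Lemma~\ref{lem-extend-rui-xu-one} gives the $j$-th factor at $s=n$ as $x_j^{-n-1}\bigl(\ti_{p_j}(x_j)-t_n(p_j;x_j)\bigr)$ and at $s=-n$ as $x_j^{\,n}\bigl(x_j^{-1}\ti_{p_j}(x_j)+(-1)^{p_j}t_n(p_j;x_j^{-1})\bigr)$. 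Expanding these products, the term $\prod_j(-t_n(p_j;x_j))$ on the positive side and $\prod_j(-1)^{p_j}t_n(p_j;x_j^{-1})$ on the negative side yield exactly the two order-$r$ $\tilde S$-sums; every cross term already has order $<r$. The residues at $s=0$ and at the half-integers $-(n+1/2)$ are then handled by Lemmas~\ref{lem-rui-xu-two}--\ref{lem-extend-rui-xu-two} and produce, respectively, a finite polynomial in single $\ti$/$\Li$-values and the lower-order $R$-sums. Two small corrections to your sketch: the residue at the isolated point $s=0$ involves only depth-one values, not any $\tilde S$-sums; and since each $\lambda_{p_j}$ actually vanishes at $s=0$ (because $t_0=0$), the pole there has order at most $q+1-r$, not $q+1$.
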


We may, in a similar manner, define another even-odd variant of cyclotomic multiple zeta values (called \emph{cyclotomic multiple $R$-values}) as follows:
\begin{align}
\Ri_{k_1,\ldots,k_r}(x_1,\ldots,x_r):=2^r\sum_{0<n_1<n_2<\cdots<n_r} \frac{x_1^{n_1}x_2^{n_2}\cdots x_r^{n_r}}{(2n_1)^{k_1}\cdots(2n_{r-1})^{k_2} (2n_r-1)^{k_r}},\label{defn-CMRV}
\end{align}
where $k_1,\ldots,k_r\in \N$ and $x_1,\ldots,x_k$ are all roots of unity with $(k_r,x_r)\neq (1,1)$. In particular, if $x_1,\ldots,x_r$ in \eqref{defn-CMRV} are all $N$-th roots of unity, they are referred to as \emph{level $N$ cyclotomic multiple $R$-values}. It is not difficult to observe, based on the stuffle relations, that a cyclotomic Euler $R$-sum of order $r$ can be expressed as a $\Z[i]$-linear combination of cyclotomic multiple $R$-values with depths not exceeding $r+1$. Based on our calculations and observations, we propose the following parity conjecture concerning cyclotomic multiple $R$-values:
\begin{con}\label{conjparitycmRv} Let $r>1$ and $x_1,\ldots,x_r$ be roots of unity, and $k_1,\ldots,k_r\geq 1$ with $(k_r,x_r)\neq (1,1)$. If $x_1,\ldots,x_r\in\{z\in \mathbb{C}: z^N=1\}$, then
\begin{align*}
&\Ri_{k_1,\ldots,k_r}(x_1,\ldots,x_r)=(-1)^{k_1+\cdots+k_r+r}x_r\Ri_{k_1,\ldots,k_r}\Big(x_1^{-1},\ldots,x_r^{-1}\Big) \quad(\text{mod  products}),
\end{align*}
where the ``mod products" means discarding all product terms of cyclotomic multiple zeta values and cyclotomic multiple $R $-values (which must appear) with depth less than $r$ and level less than or equal to $N$.
\end{con}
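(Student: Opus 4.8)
The plan is to deduce the conjecture from the already-established parity theorem for cyclotomic Euler $R$-sums, Theorem~\ref{thm-parityc-CERSmain}, by translating between depth-$r$ cyclotomic multiple $R$-values and order-$(r-1)$ cyclotomic Euler $R$-sums through the stuffle (quasi-shuffle) relations. First I would set up the dictionary. Writing $(2n_j)^{k_j}=2^{k_j}n_j^{k_j}$ and $(2n_r-1)^{k_r}=2^{k_r}(n_r-1/2)^{k_r}$, the value $\Ri_{k_1,\ldots,k_r}(x_1,\ldots,x_r)$ becomes, up to the explicit power $2^{\,r-(k_1+\cdots+k_r)}$, a nested sum with integer denominators $n_1^{k_1},\ldots,n_{r-1}^{k_{r-1}}$ and a single half-integer denominator $(n_r-1/2)^{k_r}$. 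On the other side, expanding the outer factor of the Euler $R$-sum via $(n+1/2)^{k_r}=2^{-k_r}(2n+1)^{k_r}$ and shifting the summation index $n\mapsto n_r=n+1$ turns $x^n/(n+1/2)^{k_r}$ into $2^{k_r}x^{-1}\,x^{n_r}/(2n_r-1)^{k_r}$. This index shift is precisely the source of the prefactor $x_r^{\pm1}$ that appears in the conjecture and in Theorem~\ref{thm-parityc-CERSmain}.

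Second, I would apply Theorem~\ref{thm-parityc-CERSmain} to $R_{k_1,\ldots,k_{r-1};k_r}(x_1,\ldots,x_{r-1};x_r)$, i.e.\ with order $r-1$, outer root $x=x_r$ and $q=k_r$. After rearranging and using $(-1)^{k_1+\cdots+k_r+r-1}=-(-1)^{k_1+\cdots+k_r+r}$, this reads
\begin{align*}
x_r R_{k_1,\ldots,k_{r-1};k_r}(x_1,\ldots,x_{r-1};x_r)\equiv(-1)^{k_1+\cdots+k_r+r}R_{k_1,\ldots,k_{r-1};k_r}(x_1^{-1},\ldots,x_{r-1}^{-1};x_r^{-1})
\end{align*}
modulo products of cyclotomic Euler sums and cyclotomic Euler $R$-sums of order at most $r-2$. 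Each such lower-order factor is itself, by the stuffle relations, a $\Z[i]$-combination of cyclotomic multiple zeta values and cyclotomic multiple $R$-values of depth at most $r-1$; hence the entire remainder lies in the span of products of objects of depth $<r$, which is exactly what ``mod products'' allows. Feeding the dictionary of the first step into both $R$-sums — the left-hand $x_rR(\cdots;x_r)$ matching $\Ri_{k_1,\ldots,k_r}(x_1,\ldots,x_r)$ and the right-hand $R(\cdots;x_r^{-1})$ matching $x_r\Ri_{k_1,\ldots,k_r}(x_1^{-1},\ldots,x_r^{-1})$ up to the same power of $2$ — yields the asserted parity relation.

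The main obstacle is that the stuffle expansion of $\prod_{j=1}^{r-1}\zeta_n(k_j;x_j)$ is a sum over all $(r-1)!$ relative orderings of the inner summation variables, so the order-$(r-1)$ Euler $R$-sum equals not a single depth-$r$ $R$-value but the symmetric sum $\sum_{\sigma\in S_{r-1}}\Ri_{k_{\sigma(1)},\ldots,k_{\sigma(r-1)},k_r}(x_{\sigma(1)},\ldots,x_{\sigma(r-1)},x_r)$ plus lower-depth products. For $r=2$ the inner product has a single factor, there is no permutation, and the argument above is complete; for $r\ge3$ Theorem~\ref{thm-parityc-CERSmain} only controls this permutation-symmetric combination, and disentangling it into the parity of each individual $R$-value is the genuinely hard step. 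I would attempt to resolve it by induction on the depth $r$: assuming the conjecture (mod products) for all depths below $r$, one obtains one symmetric-sum identity from each ordering of the indices and combines these with the stuffle relations expressing each $\Ri_\sigma$ in terms of the others. Verifying that this linear system has full rank modulo products — equivalently, that no nontrivial symmetric cancellation can hide a violation of parity in a single term — is the crux; the computational evidence supports this, but a rank argument valid for every level $N$ and every admissible index tuple is what a complete proof still requires.
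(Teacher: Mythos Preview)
The statement you are trying to prove is labeled a \emph{conjecture} in the paper, and the paper does not supply a proof of it; it only proves the weakened version in Theorem~\ref{thm-weakendedofCMRVparity}, where the remainder is merely asserted to lie in the $\Q$-span of cyclotomic multiple zeta values of level $\le 2N$, via Panzer's parity theorem for multiple polylogarithms. So there is no ``paper's own proof'' to compare against.

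Your plan --- pass from $\Ri$-values to Euler $R$-sums by stuffle, apply Theorem~\ref{thm-parityc-CERSmain}, and pass back --- is sound for producing parity relations among \emph{symmetrized} combinations of $\Ri$-values, and you are candid that the step from these symmetrized relations to the parity of an individual $\Ri_{k_1,\ldots,k_r}$ is where the argument stalls. That gap is genuine and is precisely why the statement remains a conjecture: Theorem~\ref{thm-parityc-CERSmain} controls $R_{p_1,\ldots,p_{r-1};q}(\cdots)$, whose stuffle expansion picks up every permutation of the inner indices together with lower-depth debris, and there is no a~priori reason the resulting linear system of symmetrized identities should separate the individual $\Ri$-values modulo products. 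Your proposed induction does not circumvent this, because at each depth you still only obtain symmetric-sum information from the contour method. In short, your write-up is an accurate diagnosis of why the conjecture is open rather than a proof of it; the paper's own contribution on this point is limited to Theorem~\ref{thm-weakendedofCMRVparity}, which sidesteps the symmetrization issue at the cost of raising the level to $2N$ and landing only in the span of cyclotomic MZVs rather than the finer ``mod products'' statement.
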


\section{Preliminaries}
This section mainly presents several important lemmas, which will play a crucial role in the subsequent consideration of residue computation for contour integrals.
\begin{lem}\emph{(cf.\ \cite{Flajolet-Salvy})}\label{lem-redisue-thm}
Let $\xi(s)$ be a kernel function and let $r(s)$ be a rational function which is $O(s^{-2})$ at infinity. Then
\begin{align}\label{residue-}
\sum\limits_{\alpha  \in O} {{\mathop{\rm Res}}{{\left( {r(s)\xi(s)},\alpha  \right)}}}  + \sum\limits_{\beta  \in S}  {{\mathop{\rm Res}}{{\left( {r(s)\xi(s)}, \beta  \right)}}}  = 0,
\end{align}
where $S$ is the set of poles of $r(s)$ and $O$ is the set of poles of $\xi(s)$ that are not poles $r(s)$. Here ${\mathop{\rm Re}\nolimits} s{\left( {r(s)},\alpha \right)} $ denotes the residue of $r(s)$ at $s= \alpha.$
\end{lem}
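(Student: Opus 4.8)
The plan is to apply the residue theorem on the family of expanding circles supplied by the kernel hypothesis and then let the radii tend to infinity. Since $\xi(s)$ is meromorphic on all of $\CC$ and $r(s)$ is rational, the product $r(s)\xi(s)$ is meromorphic, and its pole set is exactly $S\cup O$: every pole of $r(s)\xi(s)$ is either a pole of $r(s)$, which by definition belongs to $S$, or a pole of $\xi(s)$ that is not a pole of $r(s)$, which belongs to $O$. These two collections are disjoint and together exhaust all poles, so the combined sum over $S\cup O$ in \eqref{residue-} is precisely the sum of residues of $r(s)\xi(s)$ over its entire pole set (a finite sum over $S$, since a rational function has finitely many poles, together with a possibly infinite but convergent series over $O$). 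If cancellation renders some candidate point a removable singularity, its residue is simply zero and the identity is unaffected.

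First I would fix the sequence of circles $|s|=\rho_k$ from the definition of the kernel function, with $\rho_k\to\infty$, chosen so that none of the (finitely-many-per-disk) poles lie on any of these circles. For each $k$, the residue theorem gives
\begin{align*}
\oint_{|s|=\rho_k} r(s)\xi(s)\,ds = 2\pi i \sum_{|\gamma|<\rho_k} \Res\big(r(s)\xi(s),\gamma\big),
\end{align*}
where the sum runs over all poles $\gamma$ of $r(s)\xi(s)$ inside the disk $|s|<\rho_k$.

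The core estimate is a bound on the left-hand integral. On $|s|=\rho_k$ the hypothesis $r(s)=O(s^{-2})$ gives $|r(s)|\le C\rho_k^{-2}$ for some constant $C$ and all large $k$, while the kernel condition $\xi(s)=o(s)$ on these circles means $\sup_{|s|=\rho_k}|\xi(s)|/\rho_k\to 0$. Hence $|r(s)\xi(s)|=o(\rho_k^{-1})$ uniformly on the circle, and since the contour has length $2\pi\rho_k$ the standard $ML$-estimate yields
\begin{align*}
\left|\oint_{|s|=\rho_k} r(s)\xi(s)\,ds\right| \le 2\pi\rho_k\cdot o\big(\rho_k^{-1}\big)\longrightarrow 0 \quad (k\to\infty).
\end{align*}
Letting $k\to\infty$ in the residue identity, the left side tends to zero, so the partial sums of residues converge to zero; since $S\cup O$ is the full pole set, the total sum of residues vanishes, which is exactly \eqref{residue-}.

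The main obstacle, and the only place requiring genuine care, is the interplay between the two growth hypotheses: one must check that $O(s^{-2})$ decay for $r$ together with $o(s)$ growth for $\xi$ really does force the integrand to be $o(s^{-1})$, so that the factor $\rho_k$ coming from the arc length is defeated. This is precisely why the kernel is permitted to grow almost linearly while $r$ is required to decay strictly faster than $s^{-1}$: the product lands just inside the vanishing regime. A secondary point worth verifying is that the radii $\rho_k$ can always be chosen to avoid poles, which follows because a meromorphic function has only finitely many poles in each bounded disk, so the circles may be nudged slightly without affecting the limit.
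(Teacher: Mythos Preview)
Your argument is correct and is the standard residue-theorem-plus-$ML$-estimate proof one would expect. Note, however, that the paper does not actually supply its own proof of this lemma: it is stated with a citation to Flajolet--Salvy and taken as a known preliminary, with only the additional remark that the conclusion persists under the weaker hypothesis $r(s)\xi(s)=o(s^{-1})$. Your write-up essentially reconstructs the classical justification behind that citation, so there is nothing to contrast at the level of strategy.
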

Moreover, in \cite{Wang-Xu2025} we also mentioned that the lemma above holds true for $r(s)\xi(s)=o(s^{-1})$ as well.
In Flajolet and Salvy \cite{Flajolet-Salvy}, the parity of the classical Euler sum \eqref{classicalESdefn} was studied by considering the contour integral of the integrand formed by the kernel function
\begin{align*}
\xi(s)=\frac{\pi\cot(\pi s)\psi^{(p_1-1)}(-s)\psi^{(p_2-1)}(-s)\cdots \psi^{(p_k-1)}(-s)}{(p_1-1)!(p_2-1)!\cdots(p_k-1)!}
\end{align*}
and the rational function $r(s) = 1/s^q\ (p_j,q\in \N,\ j=1,2,\ldots,k)$. Here $\psi(s)$ denotes the the \emph{digamma function} defined by
\begin{align}\label{defn-classical-psi-funtion}
\psi(s)=-\gamma-\frac1{s}+\sum_{k=1}^\infty \left(\frac1{k}-\frac{1}{s+k}\right),
\end{align}
where $s\in\mathbb{C}\setminus \N_0^-\ (\N_0^-:=\N^-\cup\{0\}=\{0,-1,-2,-3,\ldots\})$ and $\gamma$ denotes the \emph{Euler-Mascheroni constant}.

In \cite{Rui-Xu2025,Wang-Xu2025}, the authors investigated the parity of cyclotomic Euler sums and cyclotomic Euler $T$-sums by examining contour integrals
\begin{align}\label{contourCES-iN}
\oint\limits_{\left( \infty  \right)} \frac{\Phi(s;x)\phi^{(p_1-1)}(s;x_1)\cdots\phi^{(p_r-1)}(s;x_r)}{(p_1-1)!\cdots (p_r-1)!s^q}(-1)^{p_1+\cdots+p_r-r} ds=0
\end{align}
and
\begin{align}\label{contourCEtS-iN}
\oint\limits_{\left( \infty  \right)} \frac{\Phi(s;x)\phi^{(p_1-1)}(s+1/2;x_1)\cdots\phi^{(p_r-1)}(s+1/2;x_r)}{(p_1-1)!\cdots (p_r-1)!(s+1/2)^q}(-1)^{p_1+\cdots+p_r-r} ds=0,
\end{align}
of functions composed of a kernel function (constructed from generalized digamma functions $\phi(s;x)$ or $\phi(s+1/2;x)$  and extended trigonometric function $\Phi(s;x)$) and rational functions $r(s)=1/s^q$ or $1/(s+1/2)^q$. Notably, any polynomial forms comprising $\Phi$ and $\phi^{(j)}$ serve as kernel function. Here the \emph{extended trigonometric function} $\phi(s;x)$ and \emph{generalized digamma function} $\Phi(s;x)$ are defined by
\begin{align}
\phi(s;x):=\sum_{k=0}^\infty \frac{x^k}{k+s}\quad (s\notin\N_0^-),
\end{align}
where $x$ is an arbitrary complex number with $|x|\leq 1$ and $x\neq 1$, and
\[\Phi(s;x):=\phi(s;x)-\phi\Big(-s;x^{-1}\Big)-\frac1{s},\]
where to ensure the convergence of the series above, $x$ can only be any root of unity. Studying the parity of cyclotomic Euler sums and cyclotomic Euler $T$-sums involves examining the residue computations of the contour integrals in \eqref{contourCES-iN} and \eqref{contourCEtS-iN}. The most critical step is to provide the Laurent series expansions or power series expansions of the $\phi$ and $\Phi$ functions at integer points and half-integer points. In \cite{Rui-Xu2025,Wang-Xu2025}, the authors correspondingly provided these expansions (see the following lemmas):
\begin{lem}(\cite[Eqs. (2.4) and (2.5)]{Rui-Xu2025})\label{lem-rui-xu-one} For $p\in \N$, if $|s+n|<1\ (n\geq 0)$, then
\begin{align}
\frac{\phi^{(p-1)}(s;x)}{(p-1)!} (-1)^{p-1}&=x^n\sum_{k=0}^\infty \binom{k+p-1}{p-1} \left((-1)^k \Li_{k+p}(x)+(-1)^p\zeta_n\Big(k+p;x^{-1}\Big)\right)(s+n)^k \nonumber\\&\quad+\frac{x^n}{(s+n)^p}\qquad (|s+n|<1,\ n\geq 0)\label{Lexp-phi--n-diffp-1},\\
\frac{\phi^{(p-1)}(s;x)}{(p-1)!} (-1)^{p-1}&=x^{-n}\sum_{k=0}^\infty \binom{k+p-1}{p-1} (-1)^k  \left( \Li_{k+p}(x)-\zeta_{n-1}\Big(k+p;x\Big)\right)(s-n)^k \nonumber\\&\qquad\qquad\qquad\qquad (|s-n|<1,\ n\geq 1).\label{Lexp-phi-n-diffp-1}
\end{align}
\end{lem}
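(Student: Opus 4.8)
The plan is to reduce both formulas to a single clean object and then develop it by the generalized binomial series. Since $\phi(s;x)=\sum_{k=0}^\infty x^k/(k+s)$ converges locally uniformly on $\CC\setminus\N_0^-$, differentiating $p-1$ times term by term gives $\phi^{(p-1)}(s;x)=(-1)^{p-1}(p-1)!\sum_{k=0}^\infty x^k/(k+s)^p$, so the quantity under study collapses to
\[
\frac{(-1)^{p-1}}{(p-1)!}\phi^{(p-1)}(s;x)=\sum_{k=0}^\infty \frac{x^k}{(k+s)^p}=:F_p(s;x).
\]
The two identities \eqref{Lexp-phi--n-diffp-1} and \eqref{Lexp-phi-n-diffp-1} then amount to expanding $F_p$ in powers of $s+n$ about the pole $s=-n$ ($n\ge 0$) and in powers of $s-n$ about the regular point $s=n$ ($n\ge 1$). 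The common engine is the expansion, valid for $m\ne 0$ and $|t|<|m|$,
\[
\frac{1}{(m+t)^p}=\sum_{j=0}^\infty (-1)^j\binom{j+p-1}{p-1}\frac{t^j}{m^{p+j}},
\]
which I would derive from the identity $\binom{-p}{j}=(-1)^j\binom{j+p-1}{p-1}$.

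For \eqref{Lexp-phi--n-diffp-1} I would put $t=s+n$ and write $F_p(s;x)=\sum_{k\ge 0}x^k/(k-n+t)^p$. The single term $k=n$ produces the stated polar part $x^n/(s+n)^p$; for all other $k$ I would apply the binomial expansion with $m=k-n$ (note $|k-n|\ge 1>|t|$) and collect the coefficient of $t^j$, which is $(-1)^j\binom{j+p-1}{p-1}\sum_{k\ne n}x^k/(k-n)^{p+j}$. Splitting this Dirichlet-type sum by $k>n$ and $k<n$ and reindexing ($k=n+l$, respectively $k=n-l$) turns the two ranges into $x^n\Li_{p+j}(x)$ and $x^n(-1)^{p+j}\zeta_n(p+j;x^{-1})$; the sign bookkeeping $(-1)^j\cdot(-1)^{p+j}=(-1)^p$ then reproduces exactly the two bracketed terms of \eqref{Lexp-phi--n-diffp-1} (with the convention that the $\zeta_n$-sum is empty when $n=0$). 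For \eqref{Lexp-phi-n-diffp-1} the point $s=n$ with $n\ge 1$ lies outside $\N_0^-$, so no polar term appears. Setting $t=s-n$ gives $F_p(s;x)=\sum_{k\ge 0}x^k/(k+n+t)^p$ with every $m=k+n\ge 1$, the coefficient of $t^j$ is $(-1)^j\binom{j+p-1}{p-1}\sum_{k\ge 0}x^k/(k+n)^{p+j}$, and reindexing $l=k+n$ rewrites the inner sum as $x^{-n}\big(\Li_{p+j}(x)-\zeta_{n-1}(p+j;x)\big)$, which is precisely \eqref{Lexp-phi-n-diffp-1}.

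The only genuinely delicate point is the legitimacy of interchanging the $k$- and $j$-summations. For $p\ge 2$ this is immediate from absolute convergence on compact subdisks of $|t|<1$, so Fubini applies directly. The hard part is $p=1$, where the defining series is only conditionally convergent (it converges by Dirichlet's test, since the partial sums of $x^k$ are bounded for a root of unity $x\ne 1$, but $\sum_k 1/|k+s|=\infty$). In that case I would not rearrange the double series naively; instead I would argue that $F_1=\phi(\,\cdot\,;x)$ is holomorphic on $\CC\setminus\N_0^-$ by locally uniform convergence via summation by parts, and then obtain the expansion coefficients by term-by-term differentiation. This is justified because every differentiated series (all $j\ge 1$ coefficients, involving $\Li_{p+j}$ with $p+j\ge 2$) converges absolutely and locally uniformly, while the sole $j=0$ coefficient is simply the well-defined value $\Li_p(x)$ of the conditionally convergent series. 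With this justification in place, everything reduces to the routine reindexing and the binomial identity recorded above.
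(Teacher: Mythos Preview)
Your proof is correct. The reduction to $F_p(s;x)=\sum_{k\ge 0}x^k/(k+s)^p$, the isolation of the polar term at $k=n$, the binomial expansion of each remaining $1/(m+t)^p$, and the reindexing into $\Li_{p+j}(x)$ and $\zeta_n(p+j;x^{-1})$ (respectively $\Li_{p+j}(x)-\zeta_{n-1}(p+j;x)$) are all carried out cleanly, and your sign bookkeeping $(-1)^j\cdot(-1)^{p+j}=(-1)^p$ matches the stated formula. Your treatment of the $p=1$ case---establishing holomorphy of $\phi(\,\cdot\,;x)$ on $\CC\setminus\N_0^-$ via summation by parts and then reading off Taylor coefficients by term-by-term differentiation, where the differentiated series are absolutely convergent---is the right way to handle the conditional convergence.

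As for comparison: the present paper does not prove this lemma at all; it simply imports equations (2.4) and (2.5) from \cite{Rui-Xu2025}. Your direct argument via the generalized binomial series is the natural route and is presumably what the cited reference does as well, so there is no meaningful methodological difference to discuss.
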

\begin{lem}(\cite[Eq. (2.6)]{Rui-Xu2025})\label{lem-rui-xu-two} For $n\in \Z$,
\begin{align}\label{LEPhi-function}
\Phi(s;x)=x^{-n} \left(\frac1{s-n}+\sum_{m=0}^\infty \Big((-1)^m\Li_{m+1}(x)-\Li_{m+1}\Big(x^{-1}\Big)\Big)(s-n)^m \right).
\end{align}
\end{lem}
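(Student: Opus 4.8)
The plan is to read off the Laurent data of $\Phi$ at an integer point directly from its definition $\Phi(s;x)=\phi(s;x)-\phi(-s;x^{-1})-\frac1s$, after recasting $\Phi$ as a single bilateral series of simple fractions. Writing $\phi(s;x)=\sum_{k=0}^\infty \frac{x^k}{s+k}$ and $-\phi(-s;x^{-1})=\sum_{k=0}^\infty\frac{x^{-k}}{s-k}$, I first note that the $k=0$ term of the second series is exactly $1/s$, which cancels the subtracted $1/s$. Reindexing the first series by its nonnegative index and the second by its negative index then merges them into the bilateral form
\[
\Phi(s;x)=\sum_{k\in\Z}\frac{x^k}{s+k},
\]
read as the sum of its two one-sided tails, each convergent by Dirichlet's test.

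From this form the singular behaviour at $s=n$ is transparent: among all terms $\frac{x^k}{s+k}$ only the index $k=-n$ is singular there, contributing the principal part $\frac{x^{-n}}{s-n}$ and hence residue $x^{-n}$, in agreement with \eqref{LEPhi-function}. For the holomorphic remainder I would expand each of the remaining fractions in its geometric Taylor series $\frac{x^k}{s+k}=\sum_{j\ge0}\frac{(-1)^j x^k}{(n+k)^{j+1}}(s-n)^j$ about $s=n$ and gather the coefficient of $(s-n)^j$, namely $(-1)^j\sum_{k\neq-n}\frac{x^k}{(n+k)^{j+1}}$. The substitution $\ell=n+k$ converts this into $(-1)^j x^{-n}\sum_{\ell\neq0}\frac{x^\ell}{\ell^{j+1}}$, and splitting the range into $\ell\ge1$ and $\ell\le-1$ (with $\ell\mapsto-\ell$ in the latter) recognizes the two halves as $\Li_{j+1}(x)$ and $(-1)^{j+1}\Li_{j+1}(x^{-1})$. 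This produces exactly the coefficient $x^{-n}\big((-1)^j\Li_{j+1}(x)-\Li_{j+1}(x^{-1})\big)$ appearing in \eqref{LEPhi-function}, uniformly in $n\in\Z$ and with no case distinction.

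The main obstacle is rigor rather than algebra: the two tails converge only conditionally (this is precisely why $x$ must be a root of unity), so interchanging the $k$-summation with the termwise Taylor expansion needs justification. I would supply this by working on a punctured disc $0<|s-n|<1$, where Abel/Dirichlet summation (bounded partial sums of $x^k$ paired with the factor $1/(s+k)$, which has bounded variation and tends to $0$) yields uniform convergence of the tails on compact subsets, so that the interchange is legitimate by the standard theorem on locally uniformly convergent series of holomorphic functions. To avoid conditional convergence entirely, I would alternatively route the computation through Lemma \ref{lem-rui-xu-one}: substituting the $p=1$ specializations of \eqref{Lexp-phi--n-diffp-1} and \eqref{Lexp-phi-n-diffp-1} for $\phi(s;x)$ and $\phi(-s;x^{-1})$, the finite sums $\zeta_{n-1}(k+1;x)$ and $\zeta_n(k+1;x^{-1})$ together with the Taylor coefficients of $-1/s$ telescope away, leaving the same closed form; and the antisymmetry $\Phi(s;x)=-\Phi(-s;x^{-1})$, immediate from the definition, reduces the sign bookkeeping for $n<0$ to the case $n\ge0$.
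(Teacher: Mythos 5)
The paper itself contains no proof of this lemma: it is imported verbatim from \cite[Eq.\ (2.6)]{Rui-Xu2025}, the citation serving as the ``proof,'' so your argument can only be judged on its own merits --- and it holds up. The reduction of $\Phi$ to the bilateral series $\sum_{k\in\Z}x^k/(s+k)$ is correct (the $k=0$ term of $-\phi(-s;x^{-1})$ is exactly the subtracted $1/s$), the principal part $x^{-n}/(s-n)$ is right, and your coefficient computation --- geometric expansion about $s=n$, reindexing $\ell=n+k$, then splitting into $\ell\ge 1$ and $\ell\le -1$ --- reproduces exactly $x^{-n}\big((-1)^m\Li_{m+1}(x)-\Li_{m+1}(x^{-1})\big)$, the minus sign coming from $(-\ell)^{j+1}$. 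You also correctly isolate the only genuine analytic issue, conditional convergence: the Abel--Dirichlet estimate (bounded partial sums of $x^k$ for a root of unity $x\neq 1$, against the bounded-variation null factor $1/(s+k)$) does give locally uniform convergence on compacta avoiding the poles, after which the Weierstrass theorem legitimizes summing the termwise Taylor expansions; this treats all $n\in\Z$ at once, which is cleaner than the case-split inherited from Lemma \ref{lem-rui-xu-one}. Your fallback route through Lemma \ref{lem-rui-xu-one} with $p=1$ also checks out, modulo one bookkeeping slip: when expanding at $s=n\ge 1$, the expansion \eqref{Lexp-phi--n-diffp-1} is applied to $\phi(\,\cdot\,;x^{-1})$, so the finite sum it produces is $\zeta_n\big(k+1;(x^{-1})^{-1}\big)=\zeta_n(k+1;x)$, not $\zeta_n(k+1;x^{-1})$; with that correction one indeed has
\begin{align*}
(-1)^k x^{-n}\bigl(\zeta_n(k+1;x)-\zeta_{n-1}(k+1;x)\bigr)=\frac{(-1)^k}{n^{k+1}},
\end{align*}
which cancels the $k$-th Taylor coefficient of $1/s$ at $s=n$ and leaves precisely \eqref{LEPhi-function}; the remaining cases follow, as you say, from the antisymmetry $\Phi(-s;x^{-1})=-\Phi(s;x)$, which is immediate from the definition.
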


\begin{lem}(\cite[Lem. 2.4]{Wang-Xu2025})\label{lem-extend-rui-xu-one} For $p\in \N$, if $|s+n|<1\ (n\geq 0)$, then
\begin{align}\label{Lexp-phi---n-diffp-1}
&\frac{\phi^{(p-1)}(s+1/2;x)}{(p-1)!} (-1)^{p-1}\nonumber\\
&=x^n\sum_{k=0}^\infty \binom{k+p-1}{p-1} \left((-1)^k \ti_{k+p}(x)x^{-1}+(-1)^p t_n\Big(k+p;x^{-1}\Big)\right)(s+n)^k
\end{align}
and if $|s-n|<1\ (n\geq 1)$
\begin{align}\label{Lexp-phi---n-diffp-2}
&\frac{\phi^{(p-1)}(s+1/2;x)}{(p-1)!} (-1)^{p-1}\nonumber\\
&=x^{-n-1}\sum_{k=0}^\infty \binom{k+p-1}{p-1} (-1)^k  \left( \ti_{k+p}(x)-t_{n}\Big(k+p;x\Big)\right)(s-n)^k.
\end{align}
\end{lem}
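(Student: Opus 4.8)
The plan is to reduce both identities to a single elementary power expansion together with a reindexing that converts half-integer denominators into the $\ti$- and $t_n$-notation, exactly mirroring the derivation of Lemma \ref{lem-rui-xu-one} for $\phi(s;x)$. First I would record the starting point: differentiating the defining series $\phi(s+1/2;x)=\sum_{k\ge0}x^k/(k+s+1/2)$ termwise $p-1$ times gives
\[
\frac{(-1)^{p-1}}{(p-1)!}\,\phi^{(p-1)}(s+1/2;x)=\sum_{k=0}^\infty \frac{x^k}{(k+s+1/2)^p},
\]
so it suffices to expand $F_p(s;x):=\sum_{k\ge0}x^k/(k+s+1/2)^p$ about the integer points. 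Throughout I would use the binomial expansion $(a+u)^{-p}=\sum_{j\ge0}(-1)^j\binom{p+j-1}{j}a^{-p-j}u^j$, valid for $|u|<|a|$, together with the two bookkeeping identities $\ti_q(x)=x\sum_{m\ge0}x^m/(m+1/2)^q$ and $t_n(q;x)=\sum_{\ell=1}^n x^\ell/(\ell-1/2)^q$, both obtained from the shift $\ell=m+1$.

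For \eqref{Lexp-phi---n-diffp-1} I would set $u=s+n$ and reindex $k\mapsto m=k-n$, writing $F_p=x^n\sum_{m\ge-n}x^m/(m+1/2+u)^p$, and then split the sum at $m=0$. After interchanging the $m$-summation with the binomial $j$-summation, the tail $m\ge0$ expands as $\sum_{j\ge0}(-1)^j\binom{p+j-1}{j}u^j\,x^{-1}\ti_{p+j}(x)$, while the finite block $-n\le m\le-1$, written with $m=-\ell$, produces the sign $(-1)^p$ and the partial sum $t_n(p+j;x^{-1})$. Collecting the coefficient of $u^k=(s+n)^k$ yields precisely $x^n\binom{k+p-1}{p-1}\big((-1)^k x^{-1}\ti_{k+p}(x)+(-1)^p t_n(k+p;x^{-1})\big)$, which is the claim. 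For \eqref{Lexp-phi---n-diffp-2} I would set $v=s-n$ and reindex to $F_p=x^{-n}\sum_{m\ge n}x^m/(m+1/2+v)^p$, then write $\sum_{m\ge n}=\sum_{m\ge0}-\sum_{0\le m\le n-1}$; the complete sum contributes $x^{-1}\ti_{p+j}(x)$ and the finite block contributes $x^{-1}t_n(p+j;x)$, giving after reindexing the prefactor $x^{-n-1}$ and the bracket $\ti_{k+p}(x)-t_n(k+p;x)$ once powers of $v$ are collected.

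The only genuine work is justifying these formal manipulations rather than carrying them out. The termwise differentiation and the interchange of the $m$- (or $\ell$-) summation with the binomial $j$-summation must be validated by absolute convergence on compact subsets of the relevant punctured disk; since every denominator $k+s+1/2$ stays bounded away from zero there, a Weierstrass $M$-test or dominated-convergence argument suffices, and this is where I expect the main (if routine) care to lie. I would also pin down the honest domain of validity: the singularities of $\phi(s+1/2;x)$ all sit at the negative half-integers $s=-1/2,-3/2,\dots$, so the expansion \eqref{Lexp-phi---n-diffp-2} about a positive integer $s=n\ (n\ge1)$ converges for $|s-n|<n+1/2$, comfortably covering $|s-n|<1$, whereas the expansion \eqref{Lexp-phi---n-diffp-1} about $s=-n$ has its nearest singularities at $s=-n\pm1/2$ and hence converges for $|s+n|<1/2$. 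In either case the radius is positive, which is all that the later residue computations at the integer poles of $\Phi(s;x)$ actually require.
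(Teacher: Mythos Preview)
The paper does not prove this lemma; it is quoted from \cite[Lem.\ 2.4]{Wang-Xu2025}. Your derivation is correct and is exactly the natural argument one expects here, fully parallel to Lemma~\ref{lem-rui-xu-one} and to the short proof the paper does give for Lemma~\ref{lem-extend-xuzhou-three} (expand the $p=1$ series, then differentiate termwise). Your observation that the true radius of convergence in \eqref{Lexp-phi---n-diffp-1} is $1/2$ rather than $1$ is accurate and worth keeping: the nearest singularities of $\phi(s+1/2;x)$ to $s=-n$ lie at $s=-n\pm 1/2$, so the stated hypothesis $|s+n|<1$ is too generous, but since the lemma is only ever used to read off Laurent coefficients at integer poles this has no effect on the applications.
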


\begin{lem}(\cite[Lem 2.5]{Wang-Xu2025})\label{lem-extend-rui-xu-two} If $|s+n+1/2|<1\ (n\geq 0)$, then
\begin{align}\label{n-1/2-lemma-MtV}
\Phi(s;x)=x^n \sum_{m=0}^\infty \left((-1)^m \ti_{m+1}(x)-x \ti_{m+1}\Big(x^{-1}\Big) \right)(s+n+1/2)^m
\end{align}
and if $|s-1/2|<1$, then
\begin{align}\label{1/2-lemma-MtV}
\Phi(s;x)=x^{-1} \sum_{m=0}^\infty \left((-1)^m \ti_{m+1}(x)-x \ti_{m+1}\Big(x^{-1}\Big) \right)(s-1/2)^m.
\end{align}
\end{lem}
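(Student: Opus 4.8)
The plan is to pass to the bilateral (two-sided) representation of $\Phi(s;x)$ and to reduce both expansions to a single Taylor computation at $s=1/2$. First I would combine the two defining Lerch-type series: writing $\phi(s;x)=\sum_{k\ge 0}x^k/(k+s)$ and $\phi(-s;x^{-1})=\sum_{k\ge 0}x^{-k}/(k-s)$, reindexing the latter by $k\mapsto -k$, and cancelling the doubled $1/s$, one obtains the identity $\Phi(s;x)=\sum_{k\in\Z}x^k/(k+s)$, understood as the symmetric principal value $\lim_{K\to\infty}\sum_{k=-K}^{K}$. From this form the quasi-periodicity $\Phi(s+1;x)=x^{-1}\Phi(s;x)$ is immediate (shift $k\mapsto k+1$), hence $\Phi(s;x)=x^{m}\Phi(s+m;x)$ for all $m\in\Z$. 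Taking $m=n+1$ and $s=-n-1/2+t$ shows that $\Phi(s;x)=x^{n+1}\Phi(1/2+t;x)$, so \eqref{n-1/2-lemma-MtV} is equivalent to \eqref{1/2-lemma-MtV} after dividing by $x^{n+1}$; it therefore suffices to establish the expansion at the single base point $s=1/2$.

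For the base case I would differentiate the bilateral series term by term, giving $\Phi^{(m)}(1/2;x)/m!=(-1)^m\sum_{k\in\Z}x^k/(k+1/2)^{m+1}$. Splitting this sum into $k\ge 0$ and $k\le -1$ and reindexing ($k=n-1$ in the first, $k=-n$ in the second) identifies the two pieces as $x^{-1}\ti_{m+1}(x)$ and $(-1)^{m+1}\ti_{m+1}(x^{-1})$. Multiplying by the prefactor $(-1)^m$ and simplifying the signs yields $\Phi^{(m)}(1/2;x)/m!=x^{-1}\bigl((-1)^m\ti_{m+1}(x)-x\ti_{m+1}(x^{-1})\bigr)$, which is precisely the coefficient in \eqref{1/2-lemma-MtV}; the shift of the first paragraph then delivers \eqref{n-1/2-lemma-MtV}.

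The main obstacle is analytic rather than algebraic: the bilateral series converges only conditionally (as $x$ is merely a root of unity with $x\ne 1$), so the term-by-term differentiation and the splitting/reindexing must be legitimized. I would carry this out by summation by parts, using that the partial sums $\sum_{|k|\le K}x^{k}$ remain bounded for a root of unity $x\ne 1$; the same estimate localizes the region of validity to the largest disk about the base point that avoids the poles of $\Phi(\cdot;x)$, which sit exactly at the integers. Alternatively, and sidestepping conditional convergence altogether, one may expand $\phi(s;x)$, $\phi(-s;x^{-1})$ and $-1/s$ separately about $s=1/2$ using Lemma~\ref{lem-extend-rui-xu-one} with $p=1$; there the $(-1)^{m}2^{m+1}$-type coefficients contributed by the $j=0$ summand of $\phi(-s;x^{-1})$ cancel exactly against those of the explicit $-1/s$, leaving only the $\ti$-values asserted in the statement.
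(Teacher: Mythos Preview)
Your proposal is correct. The bilateral representation $\Phi(s;x)=\sum_{k\in\Z}x^{k}/(k+s)$ (symmetric principal value) and the resulting quasi-periodicity $\Phi(s+1;x)=x^{-1}\Phi(s;x)$ reduce the problem to a single Taylor computation at $s=1/2$, and your coefficient computation there is accurate. The alternative route you sketch --- expanding $\phi(s;x)$, $\phi(-s;x^{-1})$, and $-1/s$ separately via Lemma~\ref{lem-extend-rui-xu-one} and watching the $2^{k+1}$ terms cancel --- also goes through cleanly and avoids the conditional-convergence caveat entirely; that is probably the cleaner option to write up.

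The paper itself does not prove this lemma: it cites \cite{Wang-Xu2025} for \eqref{n-1/2-lemma-MtV} and remarks that \eqref{1/2-lemma-MtV} ``can also be obtained through direct calculation,'' observing afterward that \eqref{1/2-lemma-MtV} is precisely the $n=-1$ instance of \eqref{n-1/2-lemma-MtV}. Your quasi-periodicity argument is the conceptual explanation behind that observation, so your approach is fully compatible with (and more explicit than) what the paper indicates. One minor point: the nearest poles of $\Phi(\cdot;x)$ to $s=1/2$ are at $0$ and $1$, so the actual radius of convergence is $1/2$, not $1$; the hypothesis $|s-1/2|<1$ in the stated lemma is looser than what the analyticity of $\Phi$ supports, but this is a feature of the statement rather than a flaw in your argument.
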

It should be noted that equation \eqref{1/2-lemma-MtV} implies that \eqref{n-1/2-lemma-MtV} also holds for $n = -1$. In \cite[Lem 2.5]{Wang-Xu2025}, only (2.1) is provided, while \eqref{1/2-lemma-MtV} is not explicitly stated; however, \eqref{1/2-lemma-MtV} can also be obtained through direct calculation.

The contour integrals considered in this paper will also utilize the lemmas mentioned above. Additionally, we will need to introduce the following lemma:
\begin{lem}\label{lem-extend-xuzhou-three} If $|s-1/2|<1\ (n\geq 0)$, then
\begin{align}
\frac{\phi^{(p-1)}(s;x)}{(p-1)!}(-1)^{p-1}=\sum_{k=0}^\infty (-1)^k\binom{k+p-1}{p-1}\ti_{k+p}(x)x^{-1}(s-1/2)^k.
\end{align}
\end{lem}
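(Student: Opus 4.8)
The plan is to reduce the statement to a single elementary power-series expansion, exactly parallel to how the author derives Lemmas \ref{lem-rui-xu-one} and \ref{lem-extend-rui-xu-one}, the only new feature being that the expansion point is the half-integer $s=1/2$ rather than an integer. First I would record the closed form of the differentiated kernel: differentiating the defining series $\phi(s;x)=\sum_{m\geq 0}x^m/(m+s)$ term by term $p-1$ times gives $\phi^{(p-1)}(s;x)=(-1)^{p-1}(p-1)!\sum_{m\geq 0}x^m/(m+s)^p$, so that the left-hand side of the claim is simply
\[
\frac{\phi^{(p-1)}(s;x)}{(p-1)!}(-1)^{p-1}=\sum_{m=0}^\infty \frac{x^m}{(m+s)^p}.
\]

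Next I would expand each summand about $s=1/2$. Writing $m+s=(m+\tfrac12)+(s-\tfrac12)$ and applying the binomial series to $\big(1+\tfrac{s-1/2}{m+1/2}\big)^{-p}$, together with the identity $\binom{-p}{k}=(-1)^k\binom{k+p-1}{p-1}$, yields
\[
\frac{1}{(m+s)^p}=\sum_{k=0}^\infty (-1)^k\binom{k+p-1}{p-1}\frac{(s-1/2)^k}{(m+1/2)^{k+p}}.
\]
Summing against $x^m$ and interchanging the two summations produces $\sum_{k\geq 0}(-1)^k\binom{k+p-1}{p-1}(s-1/2)^k\sum_{m\geq 0}x^m/(m+1/2)^{k+p}$.

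Finally I would identify the inner sum. The index shift $n=m+1$ gives $\sum_{m\geq 0}x^m/(m+1/2)^{k+p}=x^{-1}\sum_{n\geq 1}x^n/(n-1/2)^{k+p}=x^{-1}\ti_{k+p}(x)$ directly from the definition of $\ti_{k+p}$. Substituting this in returns precisely the asserted formula.

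The only genuine point requiring care is the interchange of summation, which is also where the region of validity is pinned down. Because the $m=0$ term contributes $1/s^p$, the nearest pole of $\sum_{m}x^m/(m+s)^p$ to the center $s=1/2$ lies at $s=0$, at distance $1/2$; consequently the double series converges absolutely, and the rearrangement is legitimate, exactly when $|s-1/2|<1/2$. This matches the radius of convergence of the right-hand side, since $\ti_{k+p}(x)\sim x\,2^{k+p}$ forces the coefficients to grow like $2^{k+p}$. I would therefore carry out the argument on the disc $|s-1/2|<1/2$ and conclude by the Fubini/Weierstrass theorem for absolutely convergent double series; the hypothesis should accordingly read $|s-1/2|<1/2$, so that the stray ``$(n\geq 0)$'' and the bound ``$<1$'' in the present statement are best corrected to this sharp radius.
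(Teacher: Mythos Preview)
Your argument is correct and is essentially the same as the paper's: the paper first records the $p=1$ expansion $\phi(s;x)=\sum_{k\ge 0}(-1)^k\ti_{k+1}(x)x^{-1}(s-1/2)^k$ (your double-sum computation specialized to $p=1$) and then differentiates $p-1$ times, whereas you expand directly for general $p$; the two routes are interchangeable by termwise differentiation of a power series. Your observation that the true radius of convergence is $1/2$ (the nearest pole being $s=0$) and that the ``$(n\ge 0)$'' is vestigial is a valid correction to the stated hypothesis.
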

\begin{proof}
By an elementary calculation, we deduce
\begin{align*}
\phi(s;x)=\sum_{k=0}^\infty (-1)^k\ti_{k+1}(x)x^{-1}(s-1/2)^k.
\end{align*}
Taking the $(p-1)$th derivative with respect to $s$ on both sides of the above equation completes the proof of this lemma.
\end{proof}

In this paper, we primarily investigate the parity results of cyclotomic Euler $R$-sums in \eqref{defnS-ES1} and cyclotomic Euler $\tilde{S}$-sums in \eqref{defnS-ES2} by examining the following two types of contour integrals:
\begin{align}
&\oint\limits_{\left( \infty  \right)} \frac{\Phi(s;x)\phi^{(p_1-1)}(s;x_1)\cdots\phi^{(p_r-1)}(s;x_r)}{(p_1-1)!\cdots (p_r-1)!(s-1/2)^q}(-1)^{p_1+\cdots+p_r-r} ds=0,\label{contourintegralonepaper}\\
&\oint\limits_{\left( \infty  \right)} \frac{\Phi(s;x)\phi^{(p_1-1)}(s+1/2;x_1)\cdots\phi^{(p_r-1)}(s+1/2;x_r)}{(p_1-1)!\cdots (p_r-1)!s^q}(-1)^{p_1+\cdots+p_r-r} ds=0.\label{contourintegraltwopaper}
\end{align}

\section{Cyclotomic Euler $R$-Sums}
In this section, we will derive explicit formulas for the parity of linear and quadratic cyclotomic Euler $R$-sums by examining the residue computations of the contour integrals in \eqref{contourintegralonepaper}, and provide specific illustrative examples. Furthermore, by exploring the connections between linear/quadratic cyclotomic Euler $R$-sums and cyclotomic double $S$-values as well as cyclotomic triple $S$-values, we will obtain parity results for cyclotopic multiple $S$-values with depth $\leq 3$.

\begin{thm}\label{thm-linearCES-1} Let $x,y$ be roots of unity, and $p,q\geq 1$ with $(p,y), (q,xy)\neq (1,1)$. We have
\begin{align}
&(xy)^{-1}R_{p;q}\Big(y;(xy)^{-1}\Big)-(-1)^{p+q}R_{p;q}\Big(y^{-1};xy\Big)\nonumber\\
&=\Li_p(y)\ti_q\Big((xy)^{-1}\Big)+(-1)^q(xy)^{-1}\Li_p(y)\ti_q(xy)+(-1)^q\binom{p+q-1}{p}(xy)^{-1}\ti_{p+q}(xy)\nonumber\\
&\quad+(-1)^q (xy)^{-1} \sum_{m=0}^{p-1}\binom{p+q-m-2}{q-1}\left((-1)^m\Li_{m+1}(x)-\Li_{m+1}\Big(x^{-1}\Big)\right)\ti_{p+q-m-1}(xy)\nonumber\\
&\quad+(-1)^q y^{-1} \sum_{m=0}^{q-1}\binom{p+q-m-2}{p-1}\ti_{p+q-m-1}(y)\left((-1)^m\ti_{m+1}\Big(x^{-1}\Big)-x^{-1}\ti_{m+1}(x)\right).
\end{align}
\end{thm}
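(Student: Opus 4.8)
\section*{Proof proposal}

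The plan is to apply Lemma~\ref{lem-redisue-thm} to the linear ($r=1$) instance of the contour integral \eqref{contourintegralonepaper}, whose integrand is
\[
F(s)=\frac{(-1)^{p-1}\,\Phi(s;x)\,\phi^{(p-1)}(s;y)}{(p-1)!\,(s-1/2)^q}.
\]
First I would locate the poles of $F$. The rational factor contributes one pole of order $q$ at $s=1/2$; by Lemma~\ref{lem-rui-xu-two} the kernel $\Phi(s;x)$ contributes simple poles at every integer; and by Lemma~\ref{lem-rui-xu-one} the factor $\phi^{(p-1)}(s;y)$ contributes poles of order $p$ at the non-positive integers. Hence $F$ has a pole of order $q$ at $s=1/2$, simple poles at $s=n$ for $n\ge 1$, and poles of order $p+1$ at $s=0$ and at $s=-N$ for $N\ge 1$. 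Since $F(s)=o(s^{-1})$ on a suitable family of expanding circles, the remark following Lemma~\ref{lem-redisue-thm} yields $\sum_{\alpha}\Res(F,\alpha)=0$, and the whole proof reduces to computing the four residue families and rearranging.

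For the simple poles at $s=n\ (n\ge1)$ the residue of $\Phi$ is $x^{-n}$, while $\phi^{(p-1)}(s;y)$ is analytic and is evaluated from \eqref{Lexp-phi-n-diffp-1} at $k=0$; this gives $\Res(F,n)=(xy)^{-n}\big(\Li_p(y)-\zeta_{n-1}(p;y)\big)/(n-1/2)^q$. Summing over $n\ge1$ and shifting $n\mapsto n+1$ in the $\zeta_{n-1}$-part produces exactly $\Li_p(y)\ti_q\big((xy)^{-1}\big)-(xy)^{-1}R_{p;q}\big(y;(xy)^{-1}\big)$, i.e.\ the first $R$-sum together with its companion $\Li_p(y)\ti_q\big((xy)^{-1}\big)$ term. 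For the residue at $s=1/2$ both $\Phi$ and $\phi^{(p-1)}$ are analytic; expanding them by \eqref{1/2-lemma-MtV} and Lemma~\ref{lem-extend-xuzhou-three} and extracting the coefficient of $(s-1/2)^{q-1}$ gives a Cauchy product $\sum_{m=0}^{q-1}$ which, after factoring $x(xy)^{-1}=y^{-1}$, is precisely the final $\ti(y)\,\ti(x^{\pm1})$ double-sum term of the theorem.

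The heart of the argument is the confluent poles at $s=0$ and $s=-N$. There I would set $u=s+N$, multiply the principal parts coming from \eqref{LEPhi-function} (order $1$) and \eqref{Lexp-phi--n-diffp-1} (order $p$) against the Taylor expansion of $1/(s-1/2)^q$, and read off the coefficient of $u^{-1}$. The $\zeta_N(p;y^{-1})$-piece of \eqref{Lexp-phi--n-diffp-1} contributes $(-1)^{p+q}(xy)^N\zeta_N(p;y^{-1})/(N+1/2)^q$, whose sum over $N\ge0$ is exactly $(-1)^{p+q}R_{p;q}\big(y^{-1};xy\big)$, the second $R$-sum. The remaining pieces, after using $\sum_{N\ge1}(xy)^N/(N+1/2)^s=(xy)^{-1}\ti_s(xy)-2^s$, furnish the three $\ti(xy)$-terms of the theorem, where the binomial identity $\binom{p+q-m-2}{q-1}=\binom{p+q-m-2}{p-1-m}$ is used to match the stated index, together with divergent/constant $2^s$-contributions.

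I expect the main obstacle to be exactly this bookkeeping at the confluent poles and, above all, verifying that the stray $2^s$-terms cancel. A separate computation of $\Res(F,0)$ — where the $\zeta_0$-term vanishes and the Taylor coefficients of $1/(s-1/2)^q$ at $s=0$ carry factors $2^{q+j}$ — should show that $\Res(F,0)$ equals the negative of the aggregated $2^s$-terms from the $s=-N$ tail, so that the two annihilate each other. Granting this cancellation, setting the total residue sum to zero and collecting the surviving terms reproduces the claimed identity verbatim.
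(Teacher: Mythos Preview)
Your approach is essentially identical to the paper's: same integrand, same pole classification, same residue computations via Lemmas~\ref{lem-rui-xu-one}, \ref{lem-rui-xu-two}, \ref{lem-extend-rui-xu-two} and \ref{lem-extend-xuzhou-three}, and the same application of Lemma~\ref{lem-redisue-thm}. The one simplification you are missing is that the residue formula at $s=-n$ given by the paper holds uniformly for all $n\in\N_0$ (including $n=0$, since $\zeta_0(p;y^{-1})=0$), so that summing directly over $n\ge 0$ yields $\sum_{n\ge 0}(xy)^n/(n+1/2)^s=(xy)^{-1}\ti_s(xy)$ with no leftover ``stray $2^s$-terms'' and hence no separate treatment of $s=0$ is required.
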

\begin{proof}
The proof of this theorem requires consideration of this type of contour integral:
\begin{align*}
\oint\limits_{\left( \infty  \right)} H_{p,q}(x,y;s)ds:= \oint\limits_{\left( \infty  \right)} \frac{\Phi(s;x)\phi^{(p-1)}(s;y)}{(p-1)!(s-1/2)^q} (-1)^{p-1}ds=0.
\end{align*}
Obviously, all positive integers are simple poles, all non-positive integers are poles of order $p+1$, and $s=1/2$ is a pole of order $q$. Applying Lemmas \ref{lem-rui-xu-one}, \ref{lem-rui-xu-two}, \ref{lem-extend-rui-xu-two} and \ref{lem-extend-xuzhou-three}, by direct calculations, we deduce the following residues
\begin{align*}
&\Res\left(H_{p,q}(x,y;s),n\right)=\frac{(xy)^{-n}}{(n-1/2)^q}\left(\Li_p(y)-\zeta_{n-1}(p;y)\right)\quad (n\in \N),\\
&\Res\left(H_{p,q}(x,y;s),-n\right)=\frac1{p!}\lim_{s\rightarrow -n} \frac{d^p}{ds^p}\left\{(s+n)^{p+1}H_{p,q}(x,y;s)\right\}\quad (n\in\N_0)\\
&=(-1)^q\binom{p+q-1}{p}\frac{(xy)^n}{(n+1/2)^{p+q}}+(-1)^q\frac{(xy)^n}{(n+1/2)^q}\left(\Li_p(y)+(-1)^p\zeta_n\Big(p;y^{-1}\Big)\right)\\
&\quad+(-1)^q\sum_{m=0}^{p-1} \binom{p+q-m-2}{q-1}\left((-1)^m\Li_{m+1}(x)-\Li_{m+1}\Big(x^{-1}\Big)\right)\frac{(xy)^n}{(n+1/2)^{p+q-m-1}}
\end{align*}
and
\begin{align*}
&\Res\left(H_{p,q}(x,y;s),1/2\right)\\
&=\sum_{m+k=q-1,\atop m,k\geq 0} (-1)^k\binom{k+p-1}{p-1}\ti_{k+p}(y)y^{-1}\left((-1)^m x^{-1}\ti_{m+1}(x)- \ti_{m+1}\Big(x^{-1}\Big)\right).
\end{align*}
Applying Lemma \ref{lem-redisue-thm} and the three residue values mentioned above, the theorem can be proven through a straightforward calculation.
\end{proof}
\begin{exa}Setting $(p,q)=(1,2)$ in Theorem \ref{thm-linearCES-1}, we have
	\begin{align*}
		&(xy)^{-1}R_{1;2}\Big(y;(xy)^{-1}\Big)+R_{1;2}\Big(y^{-1};xy\Big)\nonumber\\
		&=\Li_1(y)\ti_2\Big((xy)^{-1}\Big)+(xy)^{-1}\Li_1(y)\ti_2(xy)+2(xy)^{-1}\ti_{3}(xy)\nonumber\\
		&\quad+(xy)^{-1}\left(\Li_{1}(x)-\Li_{1}\Big(x^{-1}\Big)\right)\ti_{2}(xy)\nonumber\\
		&\quad+y^{-1}\ti_{2}(y)\left(\ti_{1}\Big(x^{-1}\Big)-x^{-1}\ti_{1}(x)\right)-y^{-1}\ti_{1}(y)\left(\ti_{2}\Big(x^{-1}\Big)+x^{-1}\ti_{2}(x)\right).
	\end{align*}
	Setting $(p,q)=(2,1)$ in Theorem \ref{thm-linearCES-1}, we have
	\begin{align*}
	&(xy)^{-1}R_{2;1}\Big(y;(xy)^{-1}\Big)+R_{2;1}\Big(y^{-1};xy\Big)\nonumber\\
	&=\Li_2(y)\ti_1\Big((xy)^{-1}\Big)-(xy)^{-1}\Li_2(y)\ti_1(xy)-(xy)^{-1}\ti_{3}(xy)\nonumber\\
	&\quad-(xy)^{-1}\left(\Li_{1}(x)-\Li_{1}\Big(x^{-1}\Big)\right)\ti_{2}(xy)+(xy)^{-1}\left(\Li_{2}(x)+\Li_{2}\Big(x^{-1}\Big)\right)\ti_{1}(xy)\nonumber\\
	&\quad-y^{-1}\ti_{2}(y)\left(\ti_{1}\Big(x^{-1}\Big)-x^{-1}\ti_{1}(x)\right).
	\end{align*}
\end{exa}

Next, we consider the parity results for quadratic cyclotomic Euler $R$-sums. To better present the results in the subsequent discussion, we adopt the following notation unless otherwise specified:
\begin{align}
\sum_{\sigma\in \{(1,2),(2,1)\}} f(x_{\sigma(1)},x_{\sigma(2)}):=f(x_1,x_2)+f(x_2,x_1).
\end{align}

\begin{thm}\label{thm-unlinearCES-1} Let $x,x_1,x_2$ be roots of unity, and $p_1,p_2,q\geq 1$ with $(p_1,x_1),(p_2,x_2), (q,xx_1x_2)\neq (1,1)$. We have
	\begin{align}
		&(xx_1x_2)^{-1}R_{p_1,p_2;q}\Big(x_1,x_2;(xx_1x_2)^{-1}\Big)+(-1)^{p_1+p_2+q}R_{p_1,p_2;q}\Big(x_1^{-1},x_2^{-1};xx_1x_2\Big)\nonumber\\
		&=-\Li_{p_1}(x_1)\Li_{p_2}(x_2)\ti_q\Big((xx_1x_2)^{-1}\Big)\nonumber\\
		&\quad+(xx_1x_2)^{-1}\sum_{\sigma\in\left\{(12),(21)\right\}}\Li_{p_{\sigma(1)}}\big(x_{\sigma(1)}\big)R_{p_{\sigma(2)};q}\Big(x_{\sigma(2)};(xx_1x_2)^{-1}\Big)\nonumber\\
		&\quad-(-1)^q(xx_1x_2)^{-1}\binom{q+p_1+p_2-1}{q-1}\ti_{q+p_1+p_2}(xx_1x_2)\nonumber\\
		&\quad-(-1)^q\sum_{\sigma\in\left\{(12),(21)\right\}}\sum_{k=0}^{p_{\sigma(1)}}\binom{k+p_{\sigma(2)}-1}{p_{\sigma(2)}-1}\binom{q+p_{\sigma(1)}-k-1}{q-1}\nonumber\\
		&\qquad\qquad\qquad\qquad\times\begin{Bmatrix}
		(xx_1x_2)^{-1}(-1)^k\Li_{k+p_{\sigma(2)}}(x_{\sigma(2)})\ti_{q+p_{\sigma(1)}-k}(xx_1x_2)\\
			+(-1)^{p_{\sigma(2)}}R_{k+p_{\sigma(2)};p_{\sigma(1)}+q-k}\Big(x_{\sigma(2)}^{-1};xx_1x_2\Big)
		\end{Bmatrix}\nonumber\\
		&\quad-(-1)^q(xx_1x_2)^{-1}\sum_{k=0}^{p_1+p_2-1}\binom{q+p_1+p_2-k-2}{q-1}\nonumber\\
		&\qquad\qquad\qquad\qquad\times\left((-1)^k\Li_{k+1}(x)-\Li_{k+1}\Big(x^{-1}\Big)\right)\ti_{q+p_1+p_2-k-1}(xx_1x_2)\nonumber\\
		&\quad-(-1)^q(xx_1x_2)^{-1}\Li_{p_1}(x_1)\Li_{p_2}(x_2)\ti_q(xx_1x_2)\nonumber\\
		&\quad-(-1)^q\sum_{\sigma\in\left\{(12),(21)\right\}}(-1)^{p_{\sigma(2)}}\Li_{p_{\sigma(1)}}\big(x_{\sigma(1)}\big)R_{p_{\sigma(2)};q}\Big(x_{\sigma(2)}^{-1};xx_1x_2\Big)\nonumber\\
		&\quad-(-1)^q\sum_{\sigma\in\left\{(12),(21)\right\}}\sum_{0\le k_1+k_2\le p_{\sigma(1)}-1}\binom{k_2+p_{\sigma(2)}-1}{p_{\sigma(2)}-1}\binom{q+p_{\sigma(1)}-k_1-k_2-2}{q-1}\nonumber\\
		&\qquad\qquad\qquad\qquad\times\left((-1)^{k_1}\Li_{k_1+1}(x)-\Li_{k_1+1}\Big(x^{-1}\Big)\right)\nonumber\\
		&\qquad\qquad\qquad\qquad\times\begin{Bmatrix}
			(xx_1x_2)^{-1}(-1)^{k_2}\Li_{k_2+p_{\sigma(2)}}(x_{\sigma(2)})\ti_{q+p_{\sigma(1)}-k_1-k_2-1}(xx_1x_2)\\
			+(-1)^{p_{\sigma(2)}}R_{k_2+p_{\sigma(2)};p_{\sigma(1)}+q-k_1-k_2-1}\Big(x_{\sigma(2)}^{-1};xx_1x_2\Big)
		\end{Bmatrix}\nonumber\\
		&\quad-\sum_{k_1+k_2+k_3=q-1,\atop k_1,k_2,k_3\ge0}\left((-1)^{k_1}\ti_{k_1+1}(x)-x\ti_{k_1+1}\Big(x^{-1}\Big)\right)(-1)^{k_2+k_3}\binom{k_2+p_1-1}{p_1-1}\binom{k_3+p_2-1}{p_2-1}\nonumber\\
		&\qquad\qquad\qquad\qquad\times(xx_1x_2)^{-1}\ti_{k_2+p_1}(x_1)\ti_{k_3+p_2}(x_2).
	\end{align}
\end{thm}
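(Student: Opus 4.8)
The plan is to mirror the strategy of the linear case (Theorem \ref{thm-linearCES-1}), applying Lemma \ref{lem-redisue-thm} to the contour integral \eqref{contourintegralonepaper} with $r=2$. Concretely, I would set
\[
H_{p_1,p_2,q}(x,x_1,x_2;s):=\frac{\Phi(s;x)\phi^{(p_1-1)}(s;x_1)\phi^{(p_2-1)}(s;x_2)}{(p_1-1)!(p_2-1)!(s-1/2)^q}(-1)^{p_1+p_2-2},
\]
whose integral around the infinite contour vanishes. The pole structure is: every positive integer $n$ is a pole of order $2$ (coming from the two $\phi^{(p_j-1)}$ factors, each simple at positive integers by \eqref{Lexp-phi-n-diffp-1}), every non-positive integer $-n$ is a pole of order $p_1+p_2+1$ (both $\phi$ factors plus the simple pole of $\Phi$), and $s=1/2$ is a pole of order $q$. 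Summing all residues to zero and rearranging will yield the stated identity.

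The key steps are the three separate residue computations. First I would compute $\Res(H,n)$ for $n\in\N$: here $\Phi(s;x)$ is holomorphic, so only the double pole from the two odd-harmonic expansions \eqref{Lexp-phi-n-diffp-1} contributes; extracting the $(s-n)^1$-coefficient of the product and multiplying by the Taylor value of $\Phi(s;x)/(s-1/2)^q$ at $s=n$ produces, after summing over $n$, the left-hand side $(xx_1x_2)^{-1}R_{p_1,p_2;q}(\ldots)$ together with the cross terms $\Li_{p_{\sigma(1)}}(x_{\sigma(1)})R_{p_{\sigma(2)};q}(\ldots)$ and the product term $\Li_{p_1}(x_1)\Li_{p_2}(x_2)\ti_q((xx_1x_2)^{-1})$. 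Second, for $\Res(H,-n)$ I would use \eqref{Lexp-phi--n-diffp-1} for both $\phi$ factors and \eqref{LEPhi-function} for $\Phi$, then apply the generalized Leibniz rule to extract the coefficient of $(s+n)^{p_1+p_2}$ in the numerator Laurent product against $1/(s-1/2)^q$; summing over $n\geq 0$ gives the second sum $R_{p_1,p_2;q}(x_1^{-1},x_2^{-1};xx_1x_2)$ on the left, plus all the binomial-weighted $\ti_{\,\cdot}$ and $R$-terms on the right. Third, $\Res(H,1/2)$ is governed by the order-$q$ pole: I would expand $\Phi(s;x)$ via \eqref{1/2-lemma-MtV} and both $\phi^{(p_j-1)}(s;x_j)$ via Lemma \ref{lem-extend-xuzhou-three}, then collect the coefficient of $(s-1/2)^{q-1}$, yielding the final triple convolution sum over $k_1+k_2+k_3=q-1$.

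The main obstacle will be the bookkeeping in $\Res(H,-n)$. Because the numerator is a product of \emph{three} power series (two of type \eqref{Lexp-phi--n-diffp-1}, one of type \eqref{LEPhi-function}) and the denominator is $(s-1/2)^q=((n+1/2)+(s+n))^q$, extracting the $(s+n)^{p_1+p_2}$-coefficient requires a Leibniz expansion with a triple convolution of indices, which must then be resummed over $n$ and reorganized by the symmetrizing operator $\sum_{\sigma\in\{(12),(21)\}}$ into the compact binomial form displayed. Separating the genuinely "convergent" pieces (which reassemble into $\Li$, $\ti$, and lower-order $R$-sums) from the single-pole piece that reproduces the left-hand $R_{p_1,p_2;q}$ is where the algebraic care lies; the terms split naturally according to whether the $\Phi$-expansion contributes its pole $1/(s-n)$ or its regular Taylor part, exactly matching the two blocks indexed by $\sum_{k=0}^{p_{\sigma(1)}}$ and $\sum_{0\le k_1+k_2\le p_{\sigma(1)}-1}$ in the statement. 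Once the three residue sums are assembled, the identity follows immediately from $\sum_{O}\Res+\sum_{S}\Res=0$ in Lemma \ref{lem-redisue-thm}, with no further estimation needed since $r(s)\xi(s)=o(s^{-1})$ guarantees the integral vanishes.
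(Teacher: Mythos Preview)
Your overall strategy---the contour integral \eqref{contourintegralonepaper} with $r=2$, residue summation via Lemma~\ref{lem-redisue-thm}, and the three blocks of residues at positive integers, non-positive integers, and $s=1/2$---is exactly the paper's approach. The residue at $s=1/2$ and the pole order $p_1+p_2+1$ at non-positive integers are correctly identified.

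There is, however, a concrete error in your analysis at the positive integers. You claim each $\phi^{(p_j-1)}(s;x_j)$ has a simple pole at $s=n\in\N$, citing \eqref{Lexp-phi-n-diffp-1}; but that expansion is a genuine power series in $(s-n)$ with no singular part, so both $\phi^{(p_j-1)}(s;x_j)$ are \emph{holomorphic} at positive integers. Conversely, you assert that $\Phi(s;x)$ is holomorphic there, but \eqref{LEPhi-function} shows $\Phi$ has a simple pole $x^{-n}/(s-n)$ at every integer~$n$. Thus positive integers are \emph{simple} poles of $H_{p_1,p_2,q}$, coming from $\Phi$ alone, and the residue is just
\[
\Res(H,n)=\frac{(xx_1x_2)^{-n}}{(n-1/2)^q}\bigl(\Li_{p_1}(x_1)-\zeta_{n-1}(p_1;x_1)\bigr)\bigl(\Li_{p_2}(x_2)-\zeta_{n-1}(p_2;x_2)\bigr),
\]
with no derivative involved. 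Expanding this product and summing over $n$ does yield exactly the three pieces you listed (the $R_{p_1,p_2;q}$ term, the two cross $\Li\cdot R$ terms, and the $\Li\cdot\Li\cdot\ti_q$ term), so your anticipated output is correct---but the mechanism producing it is the simple-pole evaluation, not the ``$(s-n)^1$-coefficient'' extraction you describe. With this correction the proof goes through identically to the paper's.
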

\begin{proof}
	The proof of this theorem requires consideration of this type of contour integral:
	\begin{align*}
		\oint\limits_{\left( \infty  \right)} H_{p_1,p_2,q}(x,x_1,x_2;s)ds:= \oint\limits_{\left( \infty  \right)} \frac{\Phi(s;x)\phi^{(p_1-1)}(s;x_1)\phi^{(p_2-1)}(s;x_2)}{(p_1-1)!(p_2-1)!(s-1/2)^q} (-1)^{p_1+p_2}ds=0.
	\end{align*}
	Obviously, all positive integers are simple poles, all non-positive integers are poles of order $p_1+p_2+1$, and $s=1/2$ is a pole of order $q$. Applying Lemmas \ref{lem-rui-xu-one}, \ref{lem-rui-xu-two}, \ref{lem-extend-rui-xu-two} and \ref{lem-extend-xuzhou-three}, by direct calculations, we deduce the following residues:
	\begin{align*}
		&\Res\left(H_{p_1,p_2,q}(x,x_1,x_2;s),n\right)\\
		&=\frac{(xx_1x_2)^{-n}}{(n-1/2)^q}\left(\Li_{p_1}(x_1)-\zeta_{n-1}(p_1;x_1)\right)\left(\Li_{p_2}(x_2)-\zeta_{n-1}(p_2;x_2)\right)\quad (n\in \N),\\
		&\Res\left(H_{p_1,p_2,q}(x,x_1,x_2;s),-n\right)\\
		&=\frac1{(p_1+p_2)!}\lim_{s\rightarrow -n} \frac{d^{p_1+p_2}}{ds^{p_1+p_2}}\left\{(s+n)^{p_1+p_2+1}H_{p_1,p_2,q}(x,x_1,x_2;s)\right\}\quad (n\in\N_0)\\
		&=(-1)^q\binom{q+p_1+p_2-1}{q-1}\frac{(xx_1x_2)^n}{(n+1/2)^{q+p_1+p_2}}\\
		&\quad+(-1)^q\sum_{\sigma\in\left\{(12),(21)\right\}}\sum_{k=0}^{p_{\sigma(1)}}\binom{k+p_{\sigma(2)}-1}{p_{\sigma(2)}-1}\binom{q+p_{\sigma(1)}-k-1}{q-1}\\
		&\qquad\qquad\times\left((-1)^k\Li_{k+p_{\sigma(2)}}\big(x_{\sigma(2)}\big)+(-1)^{p_{\sigma(2)}}\zeta_n\Big(k+p_{\sigma(2)};x_{\sigma(2)}^{-1}\Big)\right)\frac{(xx_1x_2)^n}{(n+1/2)^{p_{\sigma(1)}+q-k}}\\
		&\quad+(-1)^q\sum_{k=0}^{p_1+p_2-1}\binom{q+p_1+p_2-k-2}{q-1}\left((-1)^k\Li_{k+1}(x)-\Li_{k+1}\Big(x^{-1}\Big)\right)\frac{(xx_1x_2)^n}{(n+1/2)^{q+p_1+p_2-k-1}}\\
		&\quad+(-1)^q\left(\Li_{p_1}(x_1)+(-1)^{p_1}\zeta_n\Big(p_1;x_1^{-1}\Big)\right)\left(\Li_{p_2}(x_2)+(-1)^{p_2}\zeta_n\Big(p_2;x_2^{-1}\Big)\right)\frac{(xx_1x_2)^n}{(n+1/2)^{q}}\\
		&\quad+(-1)^q\sum_{\sigma\in\left\{(12),(21)\right\}}\sum_{0\le k_1+k_2\le p_{\sigma(1)}-1}\binom{k_2+p_{\sigma(2)}-1}{p_{\sigma(2)}-1}\binom{q+p_{\sigma(1)}-k_1-k_2-2}{q-1}\nonumber\\
		&\qquad\qquad\times\left((-1)^{k_1}\Li_{k_1+1}(x)-\Li_{k_1+1}\Big(x^{-1}\Big)\right)\nonumber\\
		&\qquad\qquad\times
		\left((-1)^{k_2}\Li_{k_2+p_{\sigma(2)}}(x_{\sigma(2)})+(-1)^{p_{\sigma(2)}}\zeta_n\Big(k_2+p_{\sigma(2)};x_{\sigma(2)}^{-1}\Big)\right)\frac{(xx_1x_2)^n}{(n+1/2)^{p_{\sigma(1)}+q-k_1-k_2-1}}
	\end{align*}
	and
	\begin{align*}
		&\Res\left(H_{p_1,p_2,q}(x,x_1,x_2;s),1/2\right)=\sum_{k_1+k_2+k_3=q-1,\atop k_1,k_2,k_3\geq 0} \left((-1)^{k_1}\ti_{k_1+1}(x)- x\ti_{k_1+1}\Big(x^{-1}\Big)\right)\\
		&\qquad\qquad\qquad\times(-1)^{k_2+k_3}\binom{k_2+p_1-1}{p_1-1}\binom{k_3+p_2-1}{p_2-1}\ti_{k_2+p_1}(x_1)\ti_{k_3+p_2}(x_2)(xx_1x_2)^{-1}.
	\end{align*}
	Applying Lemma \ref{lem-redisue-thm} and the three residue values mentioned above, the theorem can be proven through a straightforward calculation.
\end{proof}
\begin{exa}Setting $(p_1,p_2,q)=(1,1,2)$ in Theorem \ref{thm-unlinearCES-1}, we have
\begin{align*}
	&(xx_1x_2)^{-1}R_{1,1;2}\Big(x_1,x_2;(xx_1x_2)^{-1}\Big)+R_{1,1;2}\Big(x_1^{-1},x_2^{-1};xx_1x_2\Big)\nonumber\\
	&=-\Li_{1}(x_1)\Li_{1}(x_2)\ti_2\Big((xx_1x_2)^{-1}\Big)+(xx_1x_2)^{-1}\Li_{1}\big(x_{1}\big)R_{1;2}\Big(x_{2};(xx_1x_2)^{-1}\Big)\\
	&\quad+(xx_1x_2)^{-1}\Li_{1}\big(x_{2}\big)R_{1;2}\Big(x_{1};(xx_1x_2)^{-1}\Big)-3(xx_1x_2)^{-1}\ti_{4}(xx_1x_2)\nonumber\\
	&\quad-2(xx_1x_2)^{-1}\Li_{1}(x_{2})\ti_{3}(xx_1x_2)+2R_{1;3}\Big(x_{2}^{-1};xx_1x_2\Big)+(xx_1x_2)^{-1}\Li_{2}(x_{2})\ti_{2}(xx_1x_2)\\
	&\quad+R_{2;2}\Big(x_{2}^{-1};xx_1x_2\Big)-2(xx_1x_2)^{-1}\Li_{1}(x_{1})\ti_{3}(xx_1x_2)+2R_{1;3}\Big(x_{1}^{-1};xx_1x_2\Big)\\
	&\quad+(xx_1x_2)^{-1}\Li_{2}(x_{1})\ti_{2}(xx_1x_2)+R_{2;2}\Big(x_{1}^{-1};xx_1x_2\Big)\\
	&\quad-2(xx_1x_2)^{-1}\left(\Li_{1}(x)-\Li_{1}\Big(x^{-1}\Big)\right)\ti_{3}(xx_1x_2)+(xx_1x_2)^{-1}\left(\Li_{2}(x)+\Li_{2}\Big(x^{-1}\Big)\right)\ti_{2}(xx_1x_2)\nonumber\\
	&\quad-(xx_1x_2)^{-1}\Li_{1}(x_1)\Li_{1}(x_2)\ti_2(xx_1x_2)+\Li_{1}\big(x_{1}\big)R_{{1};2}\Big(x_{2}^{-1};xx_1x_2\Big)\\
	&\quad+\Li_{1}\big(x_{2}\big)R_{{1};2}\Big(x_{1}^{-1};xx_1x_2\Big)-\left(\Li_{1}(x)-\Li_{1}\Big(x^{-1}\Big)\right)(xx_1x_2)^{-1}\Li_{1}(x_{2})\ti_{2}(xx_1x_2)\\
	&\quad+\left(\Li_{1}(x)-\Li_{1}\Big(x^{-1}\Big)\right)R_{1;2}\Big(x_{2}^{-1};xx_1x_2\Big)-\left(\Li_{1}(x)-\Li_{1}\Big(x^{-1}\Big)\right)(xx_1x_2)^{-1}\Li_{1}(x_{1})\ti_{2}(xx_1x_2)\\
	&\quad+\left(\Li_{1}(x)-\Li_{1}\Big(x^{-1}\Big)\right)R_{1;2}\Big(x_{1}^{-1};xx_1x_2\Big)+\left(\ti_{2}(x)+x\ti_{2}\Big(x^{-1}\Big)\right)(xx_1x_2)^{-1}\ti_{1}(x_1)\ti_{1}(x_2)\\
	&\quad+\left(\ti_{1}(x)-x\ti_{1}\Big(x^{-1}\Big)\right)(xx_1x_2)^{-1}\left(\ti_{2}(x_1)\ti_{1}(x_2)+\ti_{1}(x_1)\ti_{2}(x_2)\right).
	\end{align*}
Setting $(p_1,p_2,q)=(2,2,2)$ in Theorem \ref{thm-unlinearCES-1}, we have
	\begin{align*}
	&(xx_1x_2)^{-1}R_{2,2;2}\Big(x_1,x_2;(xx_1x_2)^{-1}\Big)+R_{2,2;2}\Big(x_1^{-1},x_2^{-1};xx_1x_2\Big)\nonumber\\
	&=-\Li_{2}(x_1)\Li_{2}(x_2)\ti_2\Big((xx_1x_2)^{-1}\Big)+(xx_1x_2)^{-1}\Li_{2}\big(x_{1}\big)R_{2;2}\Big(x_{2};(xx_1x_2)^{-1}\Big)\\
	&\quad+(xx_1x_2)^{-1}\Li_{2}\big(x_{2}\big)R_{2;2}\Big(x_{1};(xx_1x_2)^{-1}\Big)-5(xx_1x_2)^{-1}\ti_{6}(xx_1x_2)\nonumber\\
	&\quad-3(xx_1x_2)^{-1}\Li_{2}(x_{2})\ti_{4}(xx_1x_2)-3R_{2;4}\Big(x_{2}^{-1};xx_1x_2\Big)+4(xx_1x_2)^{-1}\Li_{3}(x_{2})\ti_{3}(xx_1x_2)\\
	&\quad-4R_{3;3}\Big(x_{2}^{-1};xx_1x_2\Big)-3(xx_1x_2)^{-1}\Li_{4}(x_{2})\ti_{2}(xx_1x_2)-3R_{4;2}\Big(x_{2}^{-1};xx_1x_2\Big)\\
	&\quad-3(xx_1x_2)^{-1}\Li_{2}(x_{1})\ti_{4}(xx_1x_2)-3R_{2;4}\Big(x_{1}^{-1};xx_1x_2\Big)+4(xx_1x_2)^{-1}\Li_{3}(x_{1})\ti_{3}(xx_1x_2)\\
	&\quad-4R_{3;3}\Big(x_{1}^{-1};xx_1x_2\Big)-3(xx_1x_2)^{-1}\Li_{4}(x_{1})\ti_{2}(xx_1x_2)-3R_{4;2}\Big(x_{1}^{-1};xx_1x_2\Big)\\
	&\quad-4(xx_1x_2)^{-1}\left(\Li_{1}(x)-\Li_{1}\Big(x^{-1}\Big)\right)\ti_{5}(xx_1x_2)+3(xx_1x_2)^{-1}\left(\Li_{2}(x)+\Li_{2}\Big(x^{-1}\Big)\right)\ti_{4}(xx_1x_2)\\
	&\quad-2(xx_1x_2)^{-1}\left(\Li_{3}(x)-\Li_{3}\Big(x^{-1}\Big)\right)\ti_{3}(xx_1x_2)+(xx_1x_2)^{-1}\left(\Li_{4}(x)+\Li_{4}\Big(x^{-1}\Big)\right)\ti_{2}(xx_1x_2)\\
	&\quad-(xx_1x_2)^{-1}\Li_{2}(x_1)\Li_{2}(x_2)\ti_2(xx_1x_2)-\Li_{2}\big(x_{1}\big)R_{2;2}\Big(x_{2}^{-1};xx_1x_2\Big)-\Li_{2}\big(x_{2}\big)R_{2;2}\Big(x_{1}^{-1};xx_1x_2\Big)\nonumber\\
	&\quad-2\left(\Li_{1}(x)-\Li_{1}\Big(x^{-1}\Big)\right)
		\left((xx_1x_2)^{-1}\Li_{2}(x_{2})\ti_{3}(xx_1x_2)
		+R_{2;3}\Big(x_{2}^{-1};xx_1x_2\Big)\right)\\
	&\quad+2\left(\Li_{1}(x)-\Li_{1}\Big(x^{-1}\Big)\right)\left((xx_1x_2)^{-1}\Li_{3}(x_{2})\ti_{2}(xx_1x_2)-R_{3;2}\Big(x_{2}^{-1};xx_1x_2\Big)\right)\\
	&\quad+\left(\Li_{2}(x)+\Li_{2}\Big(x^{-1}\Big)\right)
	\left((xx_1x_2)^{-1}\Li_{2}(x_{2})\ti_{2}(xx_1x_2)
	+R_{2;2}\Big(x_{2}^{-1};xx_1x_2\Big)\right)\\
	&\quad-2\left(\Li_{1}(x)-\Li_{1}\Big(x^{-1}\Big)\right)
	\left((xx_1x_2)^{-1}\Li_{2}(x_{1})\ti_{3}(xx_1x_2)
	+R_{2;3}\Big(x_{1}^{-1};xx_1x_2\Big)\right)\\
	&\quad+2\left(\Li_{1}(x)-\Li_{1}\Big(x^{-1}\Big)\right)\left((xx_1x_2)^{-1}\Li_{3}(x_{1})\ti_{2}(xx_1x_2)-R_{3;2}\Big(x_{1}^{-1};xx_1x_2\Big)\right)\\
	&\quad+\left(\Li_{2}(x)+\Li_{2}\Big(x^{-1}\Big)\right)
	\left((xx_1x_2)^{-1}\Li_{2}(x_{1})\ti_{2}(xx_1x_2)
	+R_{2;2}\Big(x_{1}^{-1};xx_1x_2\Big)\right)\\
	&\quad+\left(\ti_{2}(x)+x\ti_{2}\Big(x^{-1}\Big)\right)(xx_1x_2)^{-1}\ti_{2}(x_1)\ti_{2}(x_2)\\
	&\quad+2\left(\ti_{1}(x)-x\ti_{1}\Big(x^{-1}\Big)\right)(xx_1x_2)^{-1}\left(\ti_{3}(x_1)\ti_{2}(x_2)+\ti_{2}(x_1)\ti_{3}(x_2)\right).
\end{align*}
\end{exa}

From the definitions of cyclotomic Euler $R$-sums and cyclotomic multiple $S$-values, it follows that
\begin{align*}
\Si_{p,q}(x,y)&=4\sum_{0<n_1<n_2} \frac{x^{n_1} y^{n_2}}{(2n_1)^p(2n_2-1)^q}=\frac1{2^{p+q-2}} \sum_{n=1}^\infty \frac{y^n}{(n-1/2)^q}\zeta_{n-1}(p;x)=\frac{y}{2^{p+q-2}}R_{p;q}(x;y)
\end{align*}
and
\begin{align*}
\Si_{p,q,r}(x,y,z)&=8\sum_{0<n_1<n_2<n_3} \frac{x^{n_1} y^{n_2}z^{n_3}}{(2n_1)^p(2n_2-1)^q(2n_3-2)^r}=\frac1{2^{p+q+r-3}}\sum_{0<l<n<m}\frac{x^ly^nz^m}{l^p(n-1/2)^q(m-1)^r}\\
&=\frac1{2^{p+q+r-3}}\sum_{n=2}^\infty \frac{y^n}{(n-1/2)^q}\sum_{l=1}^{n-1} \frac{x^l}{l^p}\sum_{m=n+1}^\infty \frac{z^m}{(m-1)^r}\\
&=\frac1{2^{p+q+r-3}}\sum_{n=1}^\infty \frac{y^{n+1}}{(n+1/2)^q}\sum_{l=1}^{n} \frac{x^l}{l^p}\sum_{m=n+1}^\infty \frac{z^{m+1}}{m^r}\\
&=\frac{yz}{2^{p+q+r-3}}\sum_{n=1}^\infty \frac{y^{n}}{(n+1/2)^q} \zeta_n(p;x)\left(\Li_r(z)-\zeta_n(r;z)\right)\\
&=\frac{yz}{2^{p+q+r-3}} \Li_r(z)R_{p;q}(x;y)-\frac{yz}{2^{p+q+r-3}}R_{p,r,;q}(x,z;y).
\end{align*}
Combining Theorems \ref{thm-linearCES-1} and \ref{thm-unlinearCES-1}, we can thus obtain parity results for cyclotomic double $S$-values and cyclotomic triple $S$-values.
\begin{cor} Let $x,y$ be roots of unity, and $p,q\geq 1$ with $(p,x), (q,y)\neq (1,1)$. Then
\begin{align*}
\Si_{p,q}(x,y)-(-1)^{p+q}y\Si_{p,q}\Big(x^{-1},y^{-1}\Big)
\end{align*}
can be expressed in terms of a rational combination of products of cyclotomic single $t$-values and cyclotomic single zeta values.
\end{cor}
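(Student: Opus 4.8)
The plan is to deduce this parity statement directly from the linear $R$-sum parity formula of Theorem \ref{thm-linearCES-1}, using the explicit conversion $\Si_{p,q}(x,y)=\frac{y}{2^{p+q-2}}R_{p;q}(x;y)$ established immediately above. No new contour integration will be needed: the target combination, once rewritten through this conversion, reproduces precisely the left-hand side of Theorem \ref{thm-linearCES-1} after a suitable inversion of variables.

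First I would substitute the conversion identity into both $S$-values. Applying it to $\Si_{p,q}(x,y)$ and to $\Si_{p,q}(x^{-1},y^{-1})$ yields
\begin{align*}
\Si_{p,q}(x,y)-(-1)^{p+q}y\Si_{p,q}\Big(x^{-1},y^{-1}\Big)
&=\frac{y}{2^{p+q-2}}R_{p;q}(x;y)-(-1)^{p+q}y\cdot\frac{y^{-1}}{2^{p+q-2}}R_{p;q}\Big(x^{-1};y^{-1}\Big)\\
&=\frac{1}{2^{p+q-2}}\left(yR_{p;q}(x;y)-(-1)^{p+q}R_{p;q}\Big(x^{-1};y^{-1}\Big)\right).
\end{align*}
The prefactor $y$ attached to the second $S$-value is exactly what cancels the $y^{-1}$ produced by the conversion, leaving a clean bracketed expression in the two linear $R$-sums.

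Next I would match this bracketed expression with Theorem \ref{thm-linearCES-1}. Reading that theorem with its roots of unity taken to be $(xy)^{-1}$ and $x$ in place of its $x$ and $y$, the theorem's left-hand side $(xy)^{-1}R_{p;q}(y;(xy)^{-1})-(-1)^{p+q}R_{p;q}(y^{-1};xy)$ becomes exactly $yR_{p;q}(x;y)-(-1)^{p+q}R_{p;q}(x^{-1};y^{-1})$, since under this substitution the outer variable $(xy)^{-1}$ of the theorem equals $y$, the inner root $y$ equals $x$, and the product $xy$ equals $y^{-1}$. I would also verify that the admissibility conditions transport correctly: the theorem's requirements $(p,y)\neq(1,1)$ and $(q,xy)\neq(1,1)$ become $(p,x)\neq(1,1)$ and $(q,y^{-1})\neq(1,1)$, which are equivalent to the corollary's hypotheses $(p,x),(q,y)\neq(1,1)$.

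Finally, the right-hand side of Theorem \ref{thm-linearCES-1} is manifestly a rational combination of products of the polylogarithm values $\Li_*$ and the $t$-polylogarithm values $\ti_*$, that is, of cyclotomic single zeta values and cyclotomic single $t$-values; dividing by $2^{p+q-2}$ preserves this form and proves the claim. The main obstacle here is not analytic but purely one of bookkeeping: one must track the variable inversion $(x,y)\mapsto((xy)^{-1},x)$ together with the accompanying signs and powers of $2$ with care, since Theorem \ref{thm-linearCES-1} already supplies all of the substantive content through its residue computation.
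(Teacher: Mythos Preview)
Your proof is correct and follows exactly the paper's own route: the paper derives this corollary by combining the conversion identity $\Si_{p,q}(x,y)=\frac{y}{2^{p+q-2}}R_{p;q}(x;y)$ with Theorem~\ref{thm-linearCES-1}, and you have carried out precisely that substitution, including the correct change of variables $(X,Y)\mapsto((xy)^{-1},x)$ and the check that the admissibility hypotheses transfer.
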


\begin{cor} Let $x,y,z$ be roots of unity, and $p,q\geq 1$ with $(p,x), (q,y), (r,z)\neq (1,1)$. Then
\begin{align*}
\Si_{p,q,r}(x,y,z)+(-1)^{p+q+r}yz^2\Si_{p,q,r}\Big(x^{-1},y^{-1},z^{-1}\Big)
\end{align*}
can be expressed in terms of a rational combination of the products of cyclotomic multiple $S$-values and cyclotomic multiple zeta values with depth $\leq 2$.
\end{cor}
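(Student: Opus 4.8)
The plan is to reduce the statement about triple $S$-values to the already-established parity result for quadratic cyclotomic Euler $R$-sums (Theorem \ref{thm-unlinearCES-1}), using the explicit identity
\[
\Si_{p,q,r}(x,y,z)=\frac{yz}{2^{p+q+r-3}}\Li_r(z)R_{p;q}(x;y)-\frac{yz}{2^{p+q+r-3}}R_{p,r;q}(x,z;y)
\]
displayed above, together with its analogue at the inverse arguments $(x^{-1},y^{-1},z^{-1})$. Writing $c=2^{-(p+q+r-3)}$, I would substitute both identities into $\Si_{p,q,r}(x,y,z)+(-1)^{p+q+r}yz^2\,\Si_{p,q,r}(x^{-1},y^{-1},z^{-1})$. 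The prefactor $y^{-1}z^{-1}$ produced by the inverse identity combines with the weight $yz^2$ to leave a single factor $z$ on the inverse block, so the whole expression splits into a linear-$R$-sum block and a quadratic-$R$-sum block that can be treated separately.

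For the quadratic block the two surviving terms are $-cyz\,R_{p,r;q}(x,z;y)$ and $-(-1)^{p+q+r}cz\,R_{p,r;q}(x^{-1},z^{-1};y^{-1})$, whose sum equals
\[
-cz\Big(y\,R_{p,r;q}(x,z;y)+(-1)^{p+q+r}R_{p,r;q}(x^{-1},z^{-1};y^{-1})\Big).
\]
The key observation is that the bracket is exactly the left-hand side of Theorem \ref{thm-unlinearCES-1} after the specialization $w=(xyz)^{-1}$, $w_1=x$, $w_2=z$ (so that $(ww_1w_2)^{-1}=y$ and $ww_1w_2=y^{-1}$), with indices $p_1=p$, $p_2=r$. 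One checks that the corollary's hypotheses $(p,x),(r,z)\neq(1,1)$ and $(q,y)\neq(1,1)$ translate precisely into $(p_1,x_1),(p_2,x_2),(q,ww_1w_2)\neq(1,1)$, using that $(q,y)\neq(1,1)\iff(q,y^{-1})\neq(1,1)$. Applying Theorem \ref{thm-unlinearCES-1} then rewrites this block as a rational combination of products of single polylogarithm values, single $t$-values, and \emph{linear} cyclotomic Euler $R$-sums; passing back through $\Si_{p,q}(x,y)=\frac{y}{2^{p+q-2}}R_{p;q}(x;y)$ turns each linear $R$-sum into a depth-two cyclotomic $S$-value, so every summand has depth $\le 2$.

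For the linear block the remaining terms are $cyz\,\Li_r(z)R_{p;q}(x;y)$ and $(-1)^{p+q+r}cz\,\Li_r(z^{-1})R_{p;q}(x^{-1};y^{-1})$. These are already products of a single zeta value $\Li_r$ and a linear $R$-sum, so no parity reduction is needed; I would simply record them as products of a depth-one cyclotomic multiple zeta value and a depth-two cyclotomic $S$-value (here Theorem \ref{thm-linearCES-1} is not required for the triple case, as it is for the double-$S$-value corollary). Finally I would convert every $\Li_\bullet$ and $\ti_\bullet$ into the corresponding depth-one cyclotomic value, assemble both blocks, and collect terms, exhibiting the combination as a rational combination of products of cyclotomic multiple $S$-values of depth $\le 2$ together with depth-one cyclotomic zeta and $t$-values.

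The main obstacle is organizational rather than conceptual: one must verify that the weight $yz^2$ is the unique choice making the quadratic block collapse to the exact parity combination of Theorem \ref{thm-unlinearCES-1}, and then track that theorem's sizeable right-hand side — in particular the half-integer residue contribution, which supplies the products of single $t$-values — to confirm that each resulting summand genuinely has depth $\le 2$. Care is also needed to keep the argument pairs $(x,z;y)$ aligned with the theorem's convention $(w_1,w_2;(ww_1w_2)^{-1})$ consistently throughout the substitution, and to note that the depth-$\le 2$ bound is the essential content, the depth-one pieces being single zeta and $t$-values exactly as in the preceding double-$S$-value corollary.
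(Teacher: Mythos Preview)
Your proposal is correct and follows essentially the same route as the paper: it substitutes the displayed identity $\Si_{p,q,r}(x,y,z)=\frac{yz}{2^{p+q+r-3}}\bigl(\Li_r(z)R_{p;q}(x;y)-R_{p,r;q}(x,z;y)\bigr)$ at both $(x,y,z)$ and $(x^{-1},y^{-1},z^{-1})$, and then invokes Theorem~\ref{thm-unlinearCES-1} with $(x_1,x_2,xx_1x_2)=(x,z,y^{-1})$ to reduce the quadratic block. The paper's proof sketch (``Combining Theorems~\ref{thm-linearCES-1} and~\ref{thm-unlinearCES-1}'') is exactly this, the reference to Theorem~\ref{thm-linearCES-1} being for the companion double-$S$-value corollary rather than for this one, just as you note.
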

Utilizing Panzer's results \cite[Thm 1.3]{Panzer2017} on the parity of multiple polylogarithms, which can be stated as follows:
for all $r\in \N$ and $\bfk=(k_1,\ldots,k_r)\in \N^r$, the function
\begin{align*}
\Li_{\bfk}(z_1,z_2,\ldots,z_r)-(-1)^{k_1+\cdots+k_r+r}\Li_{\bfk}(1/z_1,1/z_2,\ldots,1/z_r)
\end{align*}
is of depth at most $r-1$, where $(z_1,\ldots,z_r)\in \mathbb{C}^r \setminus \bigcup_{1\leq i\leq j\leq r}\{(z_1,\ldots,z_r): z_iz_{i+1}\cdots z_j\in[0,+\infty)\}$. We can thus derive a general parity conclusion regarding cyclotomic multiple $S$-values:
\begin{thm}\label{thm-weakendedofCMSVparity}
Let $r>1$ and $x_1,\ldots,x_r$ be roots of unity, and $k_1,\ldots,k_r\geq 1$ with $(k_r,x_r)\neq (1,1)$. If $x_1,\ldots,x_r\in\{z\in \mathbb{C}: z^N=1\}$, then
\begin{align*}
&\Si_{k_1,\ldots,k_r}(x_1,\ldots,x_r)-(-1)^{k_1+\cdots+k_r+r}(x_2x_3^2\cdots x_r^{r-1})\Si_{k_1,\ldots,k_r}\Big(x_1^{-1},\ldots,x_r^{-1}\Big)
\end{align*}
can be expressed in terms of a $\Q$-linear combination of cyclotomic multiple zeta values of level $\leq 2N$.
\end{thm}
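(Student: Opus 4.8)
The plan is to realize each cyclotomic multiple $S$-value as a finite combination of level-$2N$ multiple polylogarithms and then apply Panzer's parity theorem termwise. First I would fix, for each $j$, a square root $y_j$ of $x_j$; since $x_j^N=1$ we have $y_j^{2N}=1$, so $y_j$ is a $2N$-th root of unity. Writing $m_j:=2n_j-(j-1)$, the map $(n_1,\ldots,n_r)\mapsto(m_1,\ldots,m_r)$ is a bijection from $\{0<n_1<\cdots<n_r\}$ onto $\{0<m_1<\cdots<m_r:\ m_j\equiv j-1\ (\mathrm{mod}\ 2)\}$, and the identity $x_j^{n_j}=y_j^{2n_j}=y_j^{\,j-1}y_j^{\,m_j}$ turns the denominators in \eqref{defn-CMSV} into $m_j^{k_j}$. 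Encoding the parity constraint by the indicator $\tfrac12\big(1+(-1)^{j-1}(-1)^{m_j}\big)$ and expanding the resulting product over $\epsilon\in\{0,1\}^r$, I obtain
\begin{align*}
\Si_{k_1,\ldots,k_r}(x_1,\ldots,x_r)=\Big(\prod_{j=1}^r y_j^{\,j-1}\Big)\sum_{\epsilon\in\{0,1\}^r}\Big(\prod_{j=1}^r(-1)^{(j-1)\epsilon_j}\Big)\Li_{k_1,\ldots,k_r}\big(y_1(-1)^{\epsilon_1},\ldots,y_r(-1)^{\epsilon_r}\big),
\end{align*}
a finite combination of cyclotomic multiple zeta values whose arguments $z_j:=y_j(-1)^{\epsilon_j}$ are all $2N$-th roots of unity.

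Next I would match the reflection. Choosing $y_j^{-1}$ as the square root of $x_j^{-1}$, the same expansion computes $\Si_{k_1,\ldots,k_r}(x_1^{-1},\ldots,x_r^{-1})$ with the global factor inverted and each argument replaced by $z_j^{-1}=(y_j(-1)^{\epsilon_j})^{-1}$ (using $(-1)^{-\epsilon_j}=(-1)^{\epsilon_j}$). Since $\prod_j x_j^{\,j-1}=\prod_j y_j^{\,2(j-1)}$, the prefactor $(-1)^{k_1+\cdots+k_r+r}\prod_{j=2}^r x_j^{\,j-1}$ appearing in the theorem is exactly what is needed so that, for every fixed $\epsilon$, the two contributions assemble into
\begin{align*}
\Li_{k_1,\ldots,k_r}(z_1,\ldots,z_r)-(-1)^{k_1+\cdots+k_r+r}\Li_{k_1,\ldots,k_r}\big(z_1^{-1},\ldots,z_r^{-1}\big),
\end{align*}
weighted by the common coefficient $\prod_j y_j^{\,j-1}(-1)^{(j-1)\epsilon_j}$. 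This is precisely the combination whose depth Panzer's theorem controls. I would then invoke that theorem for each $\epsilon$: every bracket above is of depth at most $r-1$, i.e.\ a $\Q$-linear combination of products of multiple polylogarithms of total depth $\le r-1$ whose arguments are among the consecutive products $z_iz_{i+1}\cdots z_j$, hence again $2N$-th roots of unity. Summing over the finitely many $\epsilon$ yields the asserted reduction to level $\le 2N$.

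The delicate point—and the step I expect to be the main obstacle—is that Panzer's clean statement is valid only off the singular locus $\{z_iz_{i+1}\cdots z_j\in[0,+\infty)\}$, whereas at roots of unity a consecutive product $z_i\cdots z_j$ can equal $1$. One must therefore apply Panzer's identity in a regularized (or radial-limit) form at these boundary values; the hypothesis $(k_r,x_r)\neq(1,1)$ guarantees $z_r\neq1$ whenever $k_r=1$, so the tail converges, but the interior partial-product coincidences still require the shuffle-regularized version of the parity theorem, and the coincidences must be tracked to be sure no spurious divergent terms survive.

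A secondary bookkeeping issue is to confirm that the global root-of-unity factor $\prod_j y_j^{\,j-1}$, together with the half-integer ($t$-value) structure already visible in Theorems \ref{thm-linearCES-1} and \ref{thm-unlinearCES-1} (where single $t$-values carry exactly such half-root factors), is absorbed so that the final combination is genuinely $\Q$-linear in the level-$\le 2N$ cyclotomic multiple zeta values; this is where the precise convention for ``level $\le 2N$'' is used. I expect the substitution and the indicator expansion to be routine, the reflection-matching to reduce to the elementary identity $\prod_j x_j^{\,j-1}=\prod_j y_j^{\,2(j-1)}$, and the entire analytic difficulty to be concentrated in invoking Panzer's theorem correctly on the boundary locus.
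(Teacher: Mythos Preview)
Your approach is essentially identical to the paper's: both write $\Si_{k_1,\ldots,k_r}(x_1,\ldots,x_r)$ as the signed sum $\big(\prod_j\sqrt{x_j}^{\,j-1}\big)\sum_{\sigma\in\{\pm1\}^r}\big(\prod_j\sigma_j^{\,j-1}\big)\Li_{k_1,\ldots,k_r}(\sigma_1\sqrt{x_1},\ldots,\sigma_r\sqrt{x_r})$ via the square-root/parity-indicator trick and then apply Panzer's parity theorem termwise. The paper's proof is in fact terser than yours and does not address the two issues you flag (the singular locus for Panzer and the root-of-unity prefactor vis-\`a-vis $\Q$-linearity); these are genuine loose ends in the paper as well, not gaps specific to your argument.
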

\begin{proof} By a direct calculation, one obtains
\begin{align*}
&\Si_{k_1,\ldots,k_r}(x_1,\ldots,x_r)-(-1)^{k_1+\cdots+k_r+r}(x_2\cdots x_r^r)\Si_{k_1,\ldots,k_r}\Big(x_1^{-1},\ldots,x_r^{-1}\Big)\\
&=(\sqrt{x_2})\cdots (\sqrt{x_r})^{r-1}\sum_{\si_1,\ldots,\si_r\in\{\pm 1\}}\si_2 \si_3^2 \cdots \si_r^{r-1} \\&\qquad\qquad\quad\times \left(\Li_{k_1,\ldots,k_r}(\si_1\sqrt{x_1},\ldots,\si_r\sqrt{x_r})-(-1)^{k_1+\cdots+k_r+r}\Li_{k_1,\ldots,k_r}\left(\frac1{\si_1\sqrt{x_1}},\ldots,\frac1{\si_r\sqrt{x_r}}\right)\right).
\end{align*}
Therefore, by applying Panzer's parity theorem, the proof of the theorem can be completed.
\end{proof}

Similarly, a weakened version of Conjecture \ref{conjparitycmRv} can be proven using Panzer's results.
\begin{thm}\label{thm-weakendedofCMRVparity}
Let $r>1$ and $x_1,\ldots,x_r$ be roots of unity, and $k_1,\ldots,k_r\geq 1$ with $(k_r,x_r)\neq (1,1)$. If $x_1,\ldots,x_r\in\{z\in \mathbb{C}: z^N=1\}$, then
\begin{align*}
&\Ri_{k_1,\ldots,k_r}(x_1,\ldots,x_r)-(-1)^{k_1+\cdots+k_r+r}x_r\Ri_{k_1,\ldots,k_r}\Big(x_1^{-1},\ldots,x_r^{-1}\Big)
\end{align*}
can be expressed in terms of a $\Q$-linear combination of cyclotomic multiple zeta values of level $\leq 2N$.
\end{thm}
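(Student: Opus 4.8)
The plan is to imitate the proof of Theorem \ref{thm-weakendedofCMSVparity}: I would rewrite each cyclotomic multiple $R$-value as a signed sum of multiple polylogarithms evaluated at square roots of the given roots of unity, form the parity combination, and then invoke Panzer's parity theorem termwise on the resulting multiple polylogarithms.

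The first step is to reindex the defining sum of $\Ri_{k_1,\ldots,k_r}(x_1,\ldots,x_r)$. Setting $m_j=2n_j$ for $1\le j\le r-1$ and $m_r=2n_r-1$, the condition $0<n_1<\cdots<n_r$ becomes $0<m_1<\cdots<m_r$ with $m_1,\ldots,m_{r-1}$ even and $m_r$ odd; here $m_{r-1}=2n_{r-1}\le 2n_r-2<m_r$ forces the last strict inequality. Fixing a square root $\sqrt{x_j}$ of each $x_j$ and writing $x_j^{n_j}=(\sqrt{x_j})^{2n_j}$, the only surviving prefactor comes from the last slot, since $(\sqrt{x_r})^{2n_r}=\sqrt{x_r}\,(\sqrt{x_r})^{m_r}$, giving
\begin{align*}
\Ri_{k_1,\ldots,k_r}(x_1,\ldots,x_r)=2^r\sqrt{x_r}\sum_{\substack{0<m_1<\cdots<m_r\\ m_1,\ldots,m_{r-1}\text{ even},\ m_r\text{ odd}}}\frac{(\sqrt{x_1})^{m_1}\cdots(\sqrt{x_r})^{m_r}}{m_1^{k_1}\cdots m_r^{k_r}}.
\end{align*}

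Next I would remove the parity constraints using the indicators $\tfrac12(1+(-1)^{m_j})$ for $j<r$ and $\tfrac12(1-(-1)^{m_r})$ for $j=r$. Expanding the product of these indicators over $\boldsymbol\sigma=(\sigma_1,\ldots,\sigma_r)\in\{\pm1\}^r$ (with $\sigma_j=(-1)^{\epsilon_j}$) cancels the factor $2^r$ and leaves exactly one sign $\sigma_r$, coming from the odd-parity indicator in the last slot, so that
\begin{align*}
\Ri_{k_1,\ldots,k_r}(x_1,\ldots,x_r)=\sqrt{x_r}\sum_{\boldsymbol\sigma\in\{\pm1\}^r}\sigma_r\,\Li_{k_1,\ldots,k_r}\big(\sigma_1\sqrt{x_1},\ldots,\sigma_r\sqrt{x_r}\big).
\end{align*}
Choosing the square roots of the $x_j^{-1}$ to be the reciprocals $1/\sqrt{x_j}$, the same computation applied to $(x_1^{-1},\ldots,x_r^{-1})$ gives the analogous formula with prefactor $1/\sqrt{x_r}$ and arguments $1/(\sigma_j\sqrt{x_j})=\sigma_j/\sqrt{x_j}$. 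Since $x_r/\sqrt{x_r}=\sqrt{x_r}$, multiplying by $x_r$ restores the common prefactor, and subtracting yields
\begin{align*}
&\Ri_{k_1,\ldots,k_r}(x_1,\ldots,x_r)-(-1)^{k_1+\cdots+k_r+r}x_r\Ri_{k_1,\ldots,k_r}(x_1^{-1},\ldots,x_r^{-1})\\
&=\sqrt{x_r}\sum_{\boldsymbol\sigma\in\{\pm1\}^r}\sigma_r\left(\Li_{k_1,\ldots,k_r}(\sigma_1\sqrt{x_1},\ldots,\sigma_r\sqrt{x_r})-(-1)^{k_1+\cdots+k_r+r}\Li_{k_1,\ldots,k_r}\Big(\tfrac{1}{\sigma_1\sqrt{x_1}},\ldots,\tfrac{1}{\sigma_r\sqrt{x_r}}\Big)\right).
\end{align*}

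Finally, by Panzer's parity theorem each summand is a multiple polylogarithm of depth at most $r-1$; and because $x_j^N=1$ implies $(\sigma_j\sqrt{x_j})^{2N}=1$, every argument is a $2N$-th root of unity, so each depth-reduced polylogarithm is a cyclotomic multiple zeta value of level $\le 2N$, a level unaffected by the outer factor $\sqrt{x_r}$. Summing over $\boldsymbol\sigma$ stays within $\Q$-linear combinations of such values, which proves the claim. I expect the main point requiring care to be the branch bookkeeping for the square roots together with checking that Panzer's hypotheses apply to these arguments (i.e.\ that the relevant partial products avoid his excluded locus), exactly as in the treatment of the $\Si$-values.
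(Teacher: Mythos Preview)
Your proposal is correct and follows essentially the same route as the paper's own proof: you derive the identity
\[
\Ri_{k_1,\ldots,k_r}(x_1,\ldots,x_r)-(-1)^{k_1+\cdots+k_r+r}x_r\Ri_{k_1,\ldots,k_r}(x_1^{-1},\ldots,x_r^{-1})
=\sqrt{x_r}\sum_{\boldsymbol\sigma}\sigma_r\Big(\Li_{\bfk}(\sigma\sqrt{x})-(-1)^{|\bfk|+r}\Li_{\bfk}(1/(\sigma\sqrt{x}))\Big)
\]
and then invoke Panzer's parity theorem termwise, which is exactly what the paper does (the paper simply asserts the identity ``by an elementary calculation'' whereas you spell out the reindexing and the parity-indicator expansion). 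Your caveat about verifying Panzer's excluded-locus hypothesis is well taken and is likewise left implicit in the paper's argument.
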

\begin{proof} By an elementary calculation, one obtains
\begin{align*}
&\Ri_{k_1,\ldots,k_r}(x_1,\ldots,x_r)-(-1)^{k_1+\cdots+k_r+r}x_r\Ri_{k_1,\ldots,k_r}\Big(x_1^{-1},\ldots,x_r^{-1}\Big)\\
&=\sqrt{x_r}\sum_{\si_1,\ldots,\si_r\in\{\pm 1\}}\si_r\\ &\quad\times\left(\Li_{k_1,\ldots,k_r}(\si_1\sqrt{x_1},\ldots,\si_r\sqrt{x_r})-(-1)^{k_1+\cdots+k_r+r}\Li_{k_1,\ldots,k_r}\left(\frac1{\si_1\sqrt{x_1}},\ldots,\frac1{\si_r\sqrt{x_r}}\right)\right).
\end{align*}
Therefore, by applying Panzer's parity theorem, the proof of the theorem can be completed.
\end{proof}

It is worth noting that Panzer's parity results did not provide explicit formulas, while Hirose \cite{Hirose2025} and Umezawa \cite{Umezawa2025arxiv} have recently respectively provided explicit formulas for the parity of multiple zeta values and multiple polylogarithms.

\section{Cyclotomic Euler $\tilde{S}$-Sums}

In this section, we will derive explicit formulas for the parity of linear and quadratic cyclotomic Euler $\tilde{S}$-sums by examining the residue computations of the contour integrals in \eqref{contourintegraltwopaper}, and provide specific illustrative examples. Furthermore, by exploring the connections between linear/quadratic cyclotomic Euler $\tilde{S}$-sums and cyclotomic double $T$-values as well as cyclotomic triple $T$-values, we will obtain parity results for cyclotopic multiple $S$-values with depth $\leq 3$. It should be noted that in \cite{Wang-Xu2025}, the author of this paper and Wang also mentioned the definition of cyclotomic Euler $\tilde{S}$-sums but did not elaborate on it in detail.

\begin{thm}\label{thm-linearCES-2} Let $x,y$ be roots of unity, and $p,q\geq 1$ with $(p,y), (q,xy)\neq (1,1)$. We have
\begin{align}
&y^{-1}\tilde{S}_{p;q}\Big(y;(xy)^{-1}\Big)-(-1)^{p+q} \tilde{S}_{p;q}\Big(y^{-1};xy\Big)\nonumber\\
&=y^{-1}\Li_q\Big((xy)^{-1}\Big)\ti_p(y)+(-1)^q y^{-1}\Li_{q}(xy)\ti_p(y)+(-1)^q\binom{p+q-1}{p-1}y^{-1}\ti_{p+q}(y)\nonumber\\
&\quad+(-1)^q (xy)^{-1}\sum_{m=0}^{p-1} \binom{p+q-m-2}{q-1}\ti_{p+q-m-1}(xy)\nonumber\\&\qquad\qquad\qquad\qquad\qquad\times\left((-1)^m\ti_{m+1}(x)-x \ti_{m+1}\Big(x^{-1}\Big)\right)\nonumber\\
&\quad+(-1)^q y^{-1} \sum_{m=0}^{q-1} \binom{p+q-m-2}{p-1}\ti_{p+q-m-1}(y)\left((-1)^m \Li_{m+1}\Big(x^{-1}\Big)-\Li_{m+1}(x)\right).
\end{align}
\end{thm}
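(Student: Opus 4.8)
The plan is to follow the template of the proof of Theorem~\ref{thm-linearCES-1}, but starting from the contour integral \eqref{contourintegraltwopaper} specialized to $r=1$, namely
\begin{align*}
\oint\limits_{(\infty)}\widetilde H_{p,q}(x,y;s)\,ds:=\oint\limits_{(\infty)}\frac{\Phi(s;x)\phi^{(p-1)}(s+1/2;y)}{(p-1)!\,s^q}(-1)^{p-1}\,ds=0,
\end{align*}
whose vanishing is guaranteed by Lemma~\ref{lem-redisue-thm} together with the usual kernel estimate for polynomial expressions in $\Phi$ and $\phi^{(j)}$. The first step is to locate the poles of the integrand. Since $\Phi(s;x)$ has simple poles at every integer (Lemma~\ref{lem-rui-xu-two}) while $\phi^{(p-1)}(s+1/2;y)$ has poles only at the negative half-integers $s=-n-1/2$ ($n\ge 0$), the positive integers $n\in\N$ and the negative integers $-m$ ($m\in\N$) are simple poles; the point $s=0$ is a pole of order $q+1$ (the simple pole of $\Phi$ multiplied by $s^{-q}$); and each negative half-integer is a pole of order $p$.

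The second step is to evaluate the residues at the integer poles, which produce the two $\tilde{S}$-sums of the asserted identity. At $s=n$ I use $\Res(\Phi(s;x),n)=x^{-n}$ together with the $k=0$ term of \eqref{Lexp-phi---n-diffp-2}, which gives $\Res(\widetilde H_{p,q},n)=\tfrac{(xy)^{-n}}{n^q}\,y^{-1}(\ti_p(y)-t_n(p;y))$; summing over $n$ and recognizing the series \eqref{defnS-ES2} yields $-y^{-1}\tilde{S}_{p;q}(y;(xy)^{-1})+y^{-1}\Li_q((xy)^{-1})\ti_p(y)$. At $s=-m$ I use $\Res(\Phi(s;x),-m)=x^{m}$ and the $k=0$ term of \eqref{Lexp-phi---n-diffp-1}, which gives $\Res(\widetilde H_{p,q},-m)=\tfrac{(-1)^q(xy)^m}{m^q}\big(y^{-1}\ti_p(y)+(-1)^p t_m(p;y^{-1})\big)$; summing over $m$ produces $(-1)^{p+q}\tilde{S}_{p;q}(y^{-1};xy)+(-1)^q y^{-1}\Li_q(xy)\ti_p(y)$. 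Together these already account for the left-hand side and for the two $\Li_q\cdot\ti_p$ terms on the right.

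The third step, which is the computational core, is the evaluation of the two higher-order residues. At a negative half-integer $s=-n-1/2$ the principal part of $(-1)^{p-1}\phi^{(p-1)}(s+1/2;y)/(p-1)!$ is the single term $y^n(s+n+1/2)^{-p}$, because $\phi$ has only a simple pole there; hence the residue equals $y^n$ times the coefficient of $(s+n+1/2)^{p-1}$ in the Taylor expansion of $\Phi(s;x)/s^q$. Inserting the expansion \eqref{n-1/2-lemma-MtV} of $\Phi$ and the binomial expansion of $s^{-q}$ about $-n-1/2$, then summing over $n\ge 0$ via $\sum_{n\ge0}(xy)^n/(n+1/2)^{a}=(xy)^{-1}\ti_{a}(xy)$, reproduces the middle $\sum_{m=0}^{p-1}$ of the theorem. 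At $s=0$ I extract the coefficient of $s^{-1}$ from the Cauchy product of the Laurent expansion of $\Phi(s;x)$ at $0$ (Lemma~\ref{lem-rui-xu-two} with $n=0$) and the Taylor expansion of $(-1)^{p-1}\phi^{(p-1)}(s+1/2;y)/(p-1)!$ furnished by Lemma~\ref{lem-extend-xuzhou-three}; the polar part $1/s$ of $\Phi$ contributes the single term $(-1)^q\binom{p+q-1}{p-1}y^{-1}\ti_{p+q}(y)$ (the coefficient of $s^q$ in the latter Taylor series), while the holomorphic part of $\Phi$ contributes the final $\sum_{m=0}^{q-1}$ sum. The main obstacle is purely bookkeeping: keeping straight the signs $(-1)^m$, the two different binomial shapes $\binom{p+q-m-2}{q-1}$ versus $\binom{p+q-m-2}{p-1}$, and the prefactors $(xy)^{-1}$ versus $y^{-1}$ across the four contributions. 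Once the four pieces are assembled, Lemma~\ref{lem-redisue-thm} asserts that their sum vanishes, and transposing the two integer-pole contributions to the left gives precisely the claimed formula.
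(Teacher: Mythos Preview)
Your proof is correct and follows essentially the same approach as the paper: the paper considers the same contour integral (there called $G_{p,q}(x,y;s)$), identifies the same four families of poles (positive integers, negative integers, $s=0$, and the negative half-integers), and computes the four residues in exactly the manner you describe. The only cosmetic difference is that for the expansion of $\phi^{(p-1)}(s+1/2;y)$ at $s=0$ the paper cites Lemma~\ref{lem-extend-rui-xu-one} with $n=0$ (which, since $t_0=0$, collapses to the same series), whereas you invoke the equivalent Lemma~\ref{lem-extend-xuzhou-three}.
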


\begin{proof}
In the context of this paper, the proof of this theorem is based on residue computations of the following contour integral:
\begin{align*}
\oint\limits_{\left( \infty  \right)} G_{p,q}(x,y;s)ds:= \oint\limits_{\left( \infty  \right)} \frac{\Phi(s;x)\phi^{(p-1)}(s+1/2;y)}{(p-1)!s^q} (-1)^{p-1}ds=0.
\end{align*}
Obviously, $n (n\in\N),\ 0,$ and $-(n + 1/2)\ (n\in \N_0)$ are the simple poles, $(q+1)$th-order poles, and $p$th-order poles of the integrand $G_{p,q}(x,y;s)$, respectively. Using Lemma \ref{lem-rui-xu-one}-\ref{lem-extend-rui-xu-two}, the following residue values can be obtained through direct calculation:
\begin{align*}
&\Res\left(G_{p,q}(x,y;s),-n\right)=(-1)^q\frac{(xy)^n}{n^q}\left(\ti_p(y)y^{-1}+(-1)^pt_n\Big(p;y^{-1}\Big) \right) \quad (n\geq 1),\\
&\Res\left(G_{p,q}(x,y;s),n\right)=\frac{x^{-n}y^{-n-1}}{n^q}\left(\ti_p(y)-t_n(p;y)\right)\quad (n\geq 1),\\
&\Res\left(G_{p,q}(x,y;s),-n-1/2\right)=\frac1{(p-1)!} \lim_{s\rightarrow -n-1/2} \frac{d^{p-1}}{ds^{p-1}}\left((s+n+1/2)^pG_{p,q}(x,y;s)\right)\quad (n\geq 0)\\
&=(-1)^q\sum_{m=0}^{p-1} \binom{p+q-m-2}{q-1} \left((-1)^m \ti_{m+1}(x)-x \ti_{m+1}\Big(x^{-1}\Big) \right) \frac{(xy)^n}{(n+1/2)^{p+q-m-1}}
\end{align*}
and
\begin{align*}
&\Res\left(G_{p,q}(x,y;s),0\right)=\frac1{q!} \lim_{s\rightarrow 0} \frac{d^{q}}{ds^{q}}\left(s^{q+1}G_{p,q}(x,y;s)\right)\\
&=(-1)^q\binom{p+q-1}{p-1}y^{-1}\ti_{p+q}(y)\\&\quad+\sum_{m+k=q-1,\atop m,k\geq 0} \binom{k+p-1}{p-1}(-1)^k \ti_{k+p}(y)y^{-1}\left((-1)^m\Li_{m+1}(x)-\Li_{m+1}\Big(x^{-1}\Big)\right).
\end{align*}
Applying Lemma \ref{lem-redisue-thm} in conjunction with the aforementioned four residue values suffices to prove the theorem.
\end{proof}
\begin{exa}Setting $(p,q)=(1,2)$ in Theorem \ref{thm-linearCES-2}, we have
	\begin{align*}
		&y^{-1}\tilde{S}_{1;2}\Big(y;(xy)^{-1}\Big)+ \tilde{S}_{1;2}\Big(y^{-1};xy\Big)\nonumber\\
		&=y^{-1}\Li_2\Big((xy)^{-1}\Big)\ti_1(y)+ y^{-1}\Li_{2}(xy)\ti_1(y)+y^{-1}\ti_{3}(y)\nonumber\\
		&\quad+(xy)^{-1}\ti_{2}(xy)\left(\ti_{1}(x)-x \ti_{1}\Big(x^{-1}\Big)\right)\nonumber\\
		&\quad+y^{-1}\ti_{2}(y)\left(\Li_{1}\Big(x^{-1}\Big)-\Li_{1}(x)\right)-y^{-1}\ti_{1}(y)\left(\Li_{2}\Big(x^{-1}\Big)+\Li_{2}(x)\right).
	\end{align*}
	Setting $(p,q)=(2,1)$ in Theorem \ref{thm-linearCES-2}, we have
	\begin{align*}
		&y^{-1}\tilde{S}_{2;1}\Big(y;(xy)^{-1}\Big)+ \tilde{S}_{2;1}\Big(y^{-1};xy\Big)\nonumber\\
		&=y^{-1}\Li_1\Big((xy)^{-1}\Big)\ti_2(y)- y^{-1}\Li_{1}(xy)\ti_2(y)-2y^{-1}\ti_{3}(y)\nonumber\\
		&\quad-(xy)^{-1}\ti_{2}(xy)\left(\ti_{1}(x)-x \ti_{1}\Big(x^{-1}\Big)\right)+(xy)^{-1}\ti_{1}(xy)\left(\ti_{2}(x)+x \ti_{2}\Big(x^{-1}\Big)\right)\nonumber\\
		&\quad-y^{-1}\ti_{2}(y)\left(\Li_{1}\Big(x^{-1}\Big)-\Li_{1}(x)\right).
	\end{align*}
\end{exa}

\begin{thm}\label{thm-unlinearCES-2} Let $x,x_1,x_2$ be roots of unity, and $p_1,p_2,q\geq 1$ with $(p_1,x_1),(p_2,x_2),(q,xx_1x_2)\neq (1,1)$. We have
	\begin{align}
		&(x_1x_2)^{-1}\tilde{S}_{p_1,p_2;q}\Big(x_1,x_2;(xx_1x_2)^{-1}\Big)+(-1)^{p_1+p_2+q} \tilde{S}_{p_1,p_2;q}\Big(x_1^{-1},x_2^{-1};xx_1x_2\Big)\nonumber\\
		&=-(x_1x_2)^{-1}\ti_{p_1}(x_1)\ti_{p_2}(x_2)\Li_{q}\Big((xx_1x_2)^{-1}\Big)-(-1)^q(x_1x_2)^{-1}\ti_{p_1}(x_1)\ti_{p_2}(x_2)\Li_{q}(xx_1x_2)\nonumber\\
		&\quad+\sum_{\sigma\in\left\{(12),(21)\right\}}(x_1x_2)^{-1}\ti_{p_{\sigma(1)}}\big(x_{\sigma(1)}\big)\tilde{S}_{p_{\sigma(2)};q}\Big(x_{\sigma(2)};(xx_1x_2)^{-1}\Big)\nonumber\\
		&\quad-(-1)^q\sum_{\sigma\in\left\{(12),(21)\right\}}(-1)^{p_{\sigma(2)}}x_{\sigma(1)}^{-1}\ti_{p_{\sigma(1)}}\big(x_{\sigma(1)}\big)\tilde{S}_{p_{\sigma(2)};q}\Big(x_{\sigma(2)}^{-1};xx_1x_2\Big)\nonumber\\
		&\quad-(-1)^q(xx_1x_2)^{-1}\sum_{k=0}^{p_1+p_2-1} \binom{p_1+p_2+q-k-2}{q-1} \nonumber\\
		&\quad\quad\times\left((-1)^k \ti_{k+1}(x)-x \ti_{k+1}\Big(x^{-1}\Big) \right) \ti_{p_1+p_2+q-k-1}(xx_1x_2)\nonumber\\
		&\quad-(-1)^q\sum_{\sigma\in\left\{(12),(21)\right\}}\sum_{0\le k_1+k_2\le p_{\sigma(1)}-1}\binom{p_{\sigma(1)}+q-k_1-k_2-2}{q-1}\binom{k_2+p_{\sigma(2)}-1}{p_{\sigma(2)}-1}\nonumber\\
		&\quad\quad\times\left((-1)^{k_1}\ti_{k_1+1}(x)-x\ti_{k_1+1}\Big(x^{-1}\Big)\right)\nonumber\\
		&\quad\quad\times\begin{Bmatrix}
			(-1)^{k_2}(xx_1x_2)^{-1}\Li_{k_2+p_{\sigma(2)}}\big(x_{\sigma(2)}\big)\ti_{q+p_{\sigma(1)}-k_1-k_2-1}(xx_1x_2)\\+(-1)^{p_{\sigma(2)}}R_{k_2+p_{\sigma(2)};q+p_{\sigma(1)}-k_1-k_2-1}\Big(x_{\sigma(2)}^{-1};(xx_1x_2)\Big)
		\end{Bmatrix}\nonumber\\
		&\quad-\sum_{k_1+k_2=q,\atop k_1,k_2\geq 0}(-1)^q\binom{p_1+k_1-1}{p_1-1}\binom{p_2+k_2-1}{p_2-1}(x_1x_2)^{-1}\ti_{p_1+k_1}(x_1)\ti_{p_2+k_2}(x_2)\nonumber\\
		&\quad-\sum_{k_1+k_2+k_3=q-1,\atop k_1,k_2,k_3\geq 0}\left((-1)^{k_1}\Li_{k_1+1}(x)-\Li_{k_1+1}\Big(x^{-1}\Big)\right)\nonumber\\
		&\quad\quad\times(-1)^{k_2+k_3}\binom{k_2+p_1-1}{p_1-1}\binom{k_3+p_2-1}{p_2-1} \ti_{k_2+p_1}(x_1)\ti_{k_3+p_2}(x_2)(x_1x_2)^{-1}.
	\end{align}
\end{thm}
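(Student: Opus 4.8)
The plan is to follow the pattern of Theorem~\ref{thm-linearCES-2} and Theorem~\ref{thm-unlinearCES-1}, applying Lemma~\ref{lem-redisue-thm} to the contour integral \eqref{contourintegraltwopaper} in the case $r=2$,
\[
\oint_{(\infty)} G_{p_1,p_2,q}(x,x_1,x_2;s)\,ds := \oint_{(\infty)} \frac{\Phi(s;x)\phi^{(p_1-1)}(s+1/2;x_1)\phi^{(p_2-1)}(s+1/2;x_2)}{(p_1-1)!(p_2-1)!\,s^q}(-1)^{p_1+p_2}\,ds = 0.
\]
First I would classify the poles of the integrand. Since $\Phi(s;x)$ is simply polar at every integer, $s^{-q}$ contributes order $q$ at $s=0$, and each $\phi^{(p_j-1)}(s+1/2;x_j)$ is polar of order $p_j$ at the half-integers $-(m+1/2)$ with $m\ge 0$, the poles of $G_{p_1,p_2,q}$ are simple at the nonzero integers, of order $q+1$ at $s=0$, and of order $p_1+p_2$ at each $-(m+1/2)$. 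The strategy is then to evaluate the four families of residues, sum them, and set the total to zero.

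The residues at the nonzero integers are the routine part and supply the two leading $\tilde{S}$-terms. At $s=n$ $(n\ge1)$ I would use Lemma~\ref{lem-extend-rui-xu-one} (eq.~\eqref{Lexp-phi---n-diffp-2}) for each $\phi$-factor, whose leading coefficient is $x_j^{-n-1}(\ti_{p_j}(x_j)-t_n(p_j;x_j))$, together with the simple pole of $\Phi$ from Lemma~\ref{lem-rui-xu-two}; summing over $n\ge1$ then yields $(x_1x_2)^{-1}\tilde{S}_{p_1,p_2;q}(x_1,x_2;(xx_1x_2)^{-1})$ plus the product terms carrying $\Li_q((xx_1x_2)^{-1})$ and the single $\tilde{S}$-sums with argument $(xx_1x_2)^{-1}$. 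Symmetrically, at $s=-n$ I would use eq.~\eqref{Lexp-phi---n-diffp-1}, producing $(-1)^{p_1+p_2+q}\tilde{S}_{p_1,p_2;q}(x_1^{-1},x_2^{-1};xx_1x_2)$ together with the companion $\Li_q(xx_1x_2)$ and single $\tilde{S}$-sums with inverse arguments.

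For the residue at $s=0$ I would expand each $\phi^{(p_j-1)}(s+1/2;x_j)$ by Lemma~\ref{lem-extend-xuzhou-three} (a pure power series with coefficients $(-1)^k\binom{k+p_j-1}{p_j-1}\ti_{k+p_j}(x_j)x_j^{-1}$) and $\Phi(s;x)$ by Lemma~\ref{lem-rui-xu-two} at $n=0$, and read off the coefficient of $s^{q-1}$ in the product of the three series. The polar part $1/s$ of $\Phi$ produces the convolution over $k_1+k_2=q$, and the holomorphic part of $\Phi$ the convolution over $k_1+k_2+k_3=q-1$; these are exactly the final two displayed sums of the statement.

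The main obstacle will be the half-integer residues, which are of order $p_1+p_2$. At $s=-(m+1/2)$ I would set $w=s+m+1/2$ and expand each factor $\phi^{(p_j-1)}(s+1/2;x_j)$ about the integer point $-m$ via Lemma~\ref{lem-rui-xu-one}, eq.~\eqref{Lexp-phi--n-diffp-1}; the principal part of the product then splits into the top-order pole $w^{-p_1-p_2}$ and the two cross terms of the form $w^{-p_j}\times(\text{holomorphic})$, the product of the two holomorphic parts being irrelevant to the residue. Multiplying by the Taylor expansion of $\Phi(s;x)$ from Lemma~\ref{lem-extend-rui-xu-two}, eq.~\eqref{n-1/2-lemma-MtV}, and by the geometric expansion of $(w-(m+1/2))^{-q}$, and extracting the coefficient of $w^{-1}$, will produce the binomial $\binom{p_1+p_2+q-k-2}{q-1}$ from the top-order pole and $\binom{p_{\si(1)}+q-k_1-k_2-2}{q-1}\binom{k_2+p_{\si(2)}-1}{p_{\si(2)}-1}$ from the cross terms. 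The delicate point is that the holomorphic coefficient of $\phi^{(p_{\si(2)}-1)}$ equals $(-1)^{k_2}\Li_{k_2+p_{\si(2)}}(x_{\si(2)})+(-1)^{p_{\si(2)}}\zeta_m(k_2+p_{\si(2)};x_{\si(2)}^{-1})$ and so depends on $m$: after summing over $m\ge0$ the $\Li$-part collapses to $(xx_1x_2)^{-1}\ti_{\bullet}(xx_1x_2)$ while the $\zeta_m$-part reassembles into the Hurwitz-type sum $R_{k_2+p_{\si(2)};\,\bullet}(x_{\si(2)}^{-1};xx_1x_2)$, which together give the two-row brace appearing in the theorem. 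Summing all four residue families and invoking Lemma~\ref{lem-redisue-thm} yields the asserted identity; the bulk of the care lies in keeping the binomial bookkeeping consistent under the symmetrization over $\si\in\{(1,2),(2,1)\}$.
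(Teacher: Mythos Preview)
Your proposal is correct and follows essentially the same route as the paper: the same contour integral $G_{p_1,p_2,q}(x,x_1,x_2;s)$, the same pole classification (indeed your listing of the simple poles at \emph{all} nonzero integers is more carefully phrased than the paper's), the same local expansions from Lemmas~\ref{lem-rui-xu-one}--\ref{lem-extend-rui-xu-two}, and the same final assembly via Lemma~\ref{lem-redisue-thm}. The only cosmetic difference is that at $s=0$ you cite Lemma~\ref{lem-extend-xuzhou-three} (after the shift $s\mapsto s+1/2$) rather than the $n=0$ case of \eqref{Lexp-phi---n-diffp-1}; these are the same expansion, so nothing changes.
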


\begin{proof}
	In the context of this paper, the proof of this theorem is based on residue computations of the following contour integral:
	\begin{align*}
		\oint\limits_{\left( \infty  \right)} G_{p_1,p_2,q}(x,x_1,x_2;s)ds:= \oint\limits_{\left( \infty  \right)} \frac{\Phi(s;x)\phi^{(p_1-1)}(s+1/2;x_1)\phi^{(p_2-1)}(s+1/2;x_2)}{(p_1-1)!(p_2-1)!s^q} (-1)^{p_1+p_2}ds=0.
	\end{align*}
	Obviously, $n (n\in\N),\ 0,$ and $-(n + 1/2)\ (n\in \N_0)$ are the simple poles, $(q+1)$th-order poles, and $(p_1+p_2)$th-order poles of the integrand $G_{p_1,p_2,q}(x,x_1,x_2;s)$, respectively. Using Lemma \ref{lem-rui-xu-one}-\ref{lem-extend-rui-xu-two}, the following residue values can be obtained through direct calculation:
	\begin{align*}
		&\Res\left(G_{p_1,p_2,q}(x,x_1,x_2;s),-n\right)\quad (n\geq 1)\\
		&=(-1)^q\frac{(xx_1x_2)^n}{n^q}\left(\ti_{p_1}(x_1)x_1^{-1}+(-1)^{p_1}t_n\Big(p_1;x_1^{-1}\Big) \right)\left(\ti_{p_2}(x_2)x_2^{-1}+(-1)^{p_2}t_n\Big(p_2;x_2^{-1}\Big) \right) ,\\
		&\Res\left(G_{p_1,p_2,q}(x,x_1,x_2;s),n\right)\\
		&=\frac{(xx_1x_2)^{-n}(x_1x_2)^{-1}}{n^q}\left(\ti_{p_1}(x_1)-t_n(p_1;x_1)\right)\left(\ti_{p_2}(x_2)-t_n(p_2;x_2)\right)\quad (n\geq 1),\\
		&\Res\left(G_{p_1,p_2,q}(x,x_1,x_2;s),-n-1/2\right)\\
		&=\frac1{(p-1)!} \lim_{s\rightarrow -n-1/2} \frac{d^{p-1}}{ds^{p-1}}\left((s+n+1/2)^pG_{p_1,p_2,q}(x,x_1,x_2;s)\right)\quad (n\geq 0)\\
		&=(-1)^q\sum_{k=0}^{p_1+p_2-1} \binom{p_1+p_2+q-k-2}{q-1} \left((-1)^k \ti_{k+1}(x)-x \ti_{k+1}\Big(x^{-1}\Big) \right) \frac{(xx_1x_2)^n}{(n+1/2)^{p_1+p_2+q-k-1}}\\
		&\quad+(-1)^q\sum_{\sigma\in\left\{(12),(21)\right\}}\sum_{0\le k_1+k_2\le p_{\sigma(1)}-1}\binom{p_{\sigma(1)}+q-k_1-k_2-2}{q-1}\binom{k_2+p_{\sigma(2)}-1}{p_{\sigma(2)}-1}\\
		&\quad\quad\times\left((-1)^{k_1}\ti_{k_1+1}(x)-x\ti_{k_1+1}\Big(x^{-1}\Big)\right)\\
		&\quad\quad\times\left((-1)^{k_2}\Li_{k_2+p_{\sigma(2)}}\big(x_{\sigma(2)}\big)+(-1)^{p_{\sigma(2)}}\zeta_n\Big(k_2+p_{\sigma(2)};x_{\sigma(2)}^{-1}\Big)\right)\frac{(xx_1x_2)^n}{(n+1/2)^{q+p_{\sigma(1)}-k_1-k_2-1}}
	\end{align*}
	and
	\begin{align*}
		&\Res\left(G_{p_1,p_2,q}(x,x_1,x_2;s),0\right)\\
		&=\frac1{q!} \lim_{s\rightarrow 0} \frac{d^{q}}{ds^{q}}\left(s^{q+1}G_{p_1,p_2,q}(x,x_1,x_2;s)\right)\\
		&=\sum_{k_1+k_2=q,\atop k_1,k_2\geq 0}(-1)^q\binom{p_1+k_1-1}{p_1-1}\binom{p_2+k_2-1}{p_2-1}(x_1x_2)^{-1}\ti_{p_1+k_1}(x_1)\ti_{p_2+k_2}(x_2)\\
		&\quad+\sum_{k_1+k_2+k_3=q-1,\atop k_1,k_2,k_3\geq 0}\left((-1)^{k_1}\Li_{k_1+1}(x)-\Li_{k_1+1}\Big(x^{-1}\Big)\right)\\
		&\quad\quad\times(-1)^{k_2+k_3}\binom{k_2+p_1-1}{p_1-1}\binom{k_3+p_2-1}{p_2-1} \ti_{k_2+p_1}(x_1)\ti_{k_3+p_2}(x_2)(x_1x_2)^{-1}.
	\end{align*}
	Applying Lemma \ref{lem-redisue-thm} in conjunction with the aforementioned four residue values suffices to prove the theorem.
\end{proof}
\begin{exa}Setting $(p_1,p_2,q)=(1,1,2)$ in Theorem \ref{thm-unlinearCES-2}, we have
\begin{align*}
	&(x_1x_2)^{-1}\tilde{S}_{1,1;2}\Big(x_1,x_2;(xx_1x_2)^{-1}\Big)+ \tilde{S}_{1,1;2}\Big(x_1^{-1},x_2^{-1};xx_1x_2\Big)\nonumber\\
	&=-(x_1x_2)^{-1}\ti_{1}(x_1)\ti_{1}(x_2)\Li_{2}\Big((xx_1x_2)^{-1}\Big)-(x_1x_2)^{-1}\ti_{1}(x_1)\ti_{1}(x_2)\Li_{2}(xx_1x_2)\nonumber\\
	&\quad+(x_1x_2)^{-1}\ti_{1}\big(x_{1}\big)\tilde{S}_{1;2}\Big(x_{2};(xx_1x_2)^{-1}\Big)+(x_1x_2)^{-1}\ti_{1}\big(x_{2}\big)\tilde{S}_{1;2}\Big(x_{1};(xx_1x_2)^{-1}\Big)\\
	&\quad+x_{1}^{-1}\ti_{1}\big(x_{1}\big)\tilde{S}_{1;2}\Big(x_{2}^{-1};xx_1x_2\Big)+x_{2}^{-1}\ti_{1}\big(x_{2}\big)\tilde{S}_{1;2}\Big(x_{1}^{-1};xx_1x_2\Big)\\
	&\quad-2(xx_1x_2)^{-1}\left(\ti_{1}(x)-x \ti_{1}\Big(x^{-1}\Big) \right) \ti_{3}(xx_1x_2)+(xx_1x_2)^{-1}\left(\ti_{2}(x)+x \ti_{2}\Big(x^{-1}\Big)\right)\ti_{2}(xx_1x_2)\\
	&\quad-\left(\ti_{1}(x)-x\ti_{1}\Big(x^{-1}\Big)\right)
	\left((xx_1x_2)^{-1}\Li_{1}(x_{2})\ti_{2}(xx_1x_2)-R_{1;2}\Big(x_{2}^{-1};(xx_1x_2)\Big)\right)\\
	&\quad-\left(\ti_{1}(x)-x\ti_{1}\Big(x^{-1}\Big)\right)\left(
		(xx_1x_2)^{-1}\Li_{1}(x_{1})\ti_{2}(xx_1x_2)-R_{1;2}\Big(x_{1}^{-1};(xx_1x_2)\Big)\right)\\
	&\quad-(x_1x_2)^{-1}\ti_{3}(x_1)\ti_{1}(x_2)-(x_1x_2)^{-1}\ti_{1}(x_1)\ti_{3}(x_2)-(x_1x_2)^{-1}\ti_{2}(x_1)\ti_{2}(x_2)\nonumber\\
	&\quad+\left(\Li_{2}(x)+\Li_{2}\Big(x^{-1}\Big)\right) \ti_{1}(x_1)\ti_{1}(x_2)(x_1x_2)^{-1}+\left(\Li_{1}(x)-\Li_{1}\Big(x^{-1}\Big)\right) \ti_{1}(x_1)\ti_{2}(x_2)(x_1x_2)^{-1}\\
	&\quad+\left(\Li_{1}(x)-\Li_{1}\Big(x^{-1}\Big)\right) \ti_{2}(x_1)\ti_{1}(x_2)(x_1x_2)^{-1}.
\end{align*}
Setting $(p_1,p_2,q)=(1,2,2)$ in Theorem \ref{thm-unlinearCES-2}, we have
	\begin{align*}
	&(x_1x_2)^{-1}\tilde{S}_{1,2;2}\Big(x_1,x_2;(xx_1x_2)^{-1}\Big)- \tilde{S}_{1,2;2}\Big(x_1^{-1},x_2^{-1};xx_1x_2\Big)\nonumber\\
	&=-(x_1x_2)^{-1}\ti_{1}(x_1)\ti_{2}(x_2)\Li_{2}\Big((xx_1x_2)^{-1}\Big)-(x_1x_2)^{-1}\ti_{1}(x_1)\ti_{2}(x_2)\Li_{2}(xx_1x_2)\nonumber\\
	&\quad+(x_1x_2)^{-1}\ti_{1}\big(x_{1}\big)\tilde{S}_{2;2}\Big(x_{2};(xx_1x_2)^{-1}\Big)+(x_1x_2)^{-1}\ti_{2}\big(x_{2}\big)\tilde{S}_{1;2}\Big(x_{1};(xx_1x_2)^{-1}\Big)\\
	&\quad-x_{1}^{-1}\ti_{1}\big(x_{1}\big)\tilde{S}_{2;2}\Big(x_{2}^{-1};xx_1x_2\Big)+x_{2}^{-1}\ti_{2}\big(x_{2}\big)\tilde{S}_{1;2}\Big(x_{1}^{-1};xx_1x_2\Big)\\
	&\quad-3(xx_1x_2)^{-1}\left(\ti_{1}(x)-x \ti_{1}\Big(x^{-1}\Big) \right) \ti_{4}(xx_1x_2)+2(xx_1x_2)^{-1}\left(\ti_{2}(x)+x \ti_{2}\Big(x^{-1}\Big) \right) \ti_{3}(xx_1x_2)\\
	&\quad-(xx_1x_2)^{-1}\left(\ti_{3}(x)-x \ti_{3}\Big(x^{-1}\Big) \right) \ti_{2}(xx_1x_2)\nonumber\\
	&\quad-\left(\ti_{1}(x)-x\ti_{1}\Big(x^{-1}\Big)\right)\left((xx_1x_2)^{-1}\Li_{2}\big(x_{2}\big)\ti_{2}(xx_1x_2)+R_{2;2}\Big(x_{2}^{-1};(xx_1x_2)\Big)\right)\\
	&\quad-2\left(\ti_{1}(x)-x\ti_{1}\Big(x^{-1}\Big)\right)\left((xx_1x_2)^{-1}\Li_{1}\big(x_{1}\big)\ti_{3}(xx_1x_2)-R_{1;3}\Big(x_{1}^{-1};(xx_1x_2)\Big)\right)\\
	&\quad+\left(\ti_{2}(x)+x\ti_{2}\Big(x^{-1}\Big)\right)\left((xx_1x_2)^{-1}\Li_{1}\big(x_{1}\big)\ti_{2}(xx_1x_2)-R_{1;2}\Big(x_{1}^{-1};(xx_1x_2)\Big)\right)\\
	&\quad+\left(\ti_{1}(x)-x\ti_{1}\Big(x^{-1}\Big)\right)\left((xx_1x_2)^{-1}\Li_{2}\big(x_{1}\big)\ti_{2}(xx_1x_2)+R_{2;2}\Big(x_{1}^{-1};(xx_1x_2)\Big)\right)\\
	&\quad-3(x_1x_2)^{-1}\ti_{1}(x_1)\ti_{4}(x_2)-(x_1x_2)^{-1}\ti_{3}(x_1)\ti_{2}(x_2)-2(x_1x_2)^{-1}\ti_{2}(x_1)\ti_{3}(x_2)\\	
	&\quad+\left(\Li_{2}(x)+\Li_{2}\Big(x^{-1}\Big)\right) \ti_{1}(x_1)\ti_{2}(x_2)(x_1x_2)^{-1}+\left(\Li_{1}(x)-\Li_{1}\Big(x^{-1}\Big)\right) \ti_{2}(x_1)\ti_{2}(x_2)(x_1x_2)^{-1}\\
	&\quad+2\left(\Li_{1}(x)-\Li_{1}\Big(x^{-1}\Big)\right) \ti_{1}(x_1)\ti_{3}(x_2)(x_1x_2)^{-1}.
\end{align*}
\end{exa}
In \cite{Wang-Xu2025}, the author of this paper and Wang presented the following relationships between cyclotomic linear and quadratic Euler $\tilde{S}$-sums and cyclotomic multiple $T$-values of depth $\leq 3$:
\begin{align*}
&\Ti_{p,q}(x,y)=\frac{y}{2^{p+q-2}}\tilde{S}_{p;q}(x;y),\\
&\Ti_{p,q,r}(x,y,z)=\frac{yz}{2^{p+q+r-3}}\ti_r(z)\tilde{S}_{p;q}(x;y)-\frac{yz}{2^{p+q+r-3}}\tilde{S}_{p,r;q}(x,z;y).
\end{align*}
They also provided declarative results on the parity of cyclotomic multiple $T$-values of depth $\leq 3$ but did not give explicit formulas. However, Theorems \ref{thm-linearCES-2} and \ref{thm-unlinearCES-2} in this paper can supply specific formulas for the parity of cyclotomic multiple $T$-values of depth $\leq 3$, which will not be elaborated further here.

\section{Parity of Arbitrary-Order Cyclotomic Euler $R/\tilde{S}$-Sums}
In this section, we will employ the method of contour integrals to prove Theorems \ref{thm-parityc-CERSmain} and \ref{thm-parityc-CE-SS}.
\begin{thm}\label{thm-parityc-CERS} Let $x,x_1,\ldots,x_r$ be roots of unity, and $p_1,\ldots,p_r,q\geq 1$ with $(p_j,x_j)\neq (1,1)\ (j=1,2,\ldots,r)$ and $ (q,xx_1\cdots x_r)\neq (1,1)$. The
\begin{align*}
&(xx_1\cdots x_r)^{-1}R_{p_1,\ldots,p_r;q}\Big(x_1,\ldots,x_r;(xx_1\cdots x_r)^{-1}\Big)\\
&\quad+(-1)^{p_1+\cdots+p_r+q+r}R_{p_1,\ldots,p_r;q}\Big(x_1^{-1},\ldots,x_r^{-1};xx_1\cdots x_r\Big)
\end{align*}
reduces to a combination of sums of lower orders.
\end{thm}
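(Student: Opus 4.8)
The plan is to run the same contour-integral machinery used for the linear and quadratic cases (Theorems \ref{thm-linearCES-1} and \ref{thm-unlinearCES-1}), now with $r$ generalized-digamma factors. Concretely, I would apply Lemma \ref{lem-redisue-thm} to the integrand
\[
H_{p_1,\ldots,p_r,q}(x,x_1,\ldots,x_r;s):=\frac{\Phi(s;x)\,\phi^{(p_1-1)}(s;x_1)\cdots\phi^{(p_r-1)}(s;x_r)}{(p_1-1)!\cdots(p_r-1)!\,(s-1/2)^q}(-1)^{p_1+\cdots+p_r-r}
\]
appearing in the contour integral \eqref{contourintegralonepaper}. The global sign $(-1)^{p_1+\cdots+p_r-r}$ is chosen precisely so that each factor $\tfrac{\phi^{(p_j-1)}(s;x_j)}{(p_j-1)!}(-1)^{p_j-1}$ matches the left-hand sides of Lemmas \ref{lem-rui-xu-one} and \ref{lem-extend-xuzhou-three} verbatim. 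The poles are: simple poles at $s=n\ (n\ge1)$, poles of order $p_1+\cdots+p_r+1$ at $s=-n\ (n\ge0)$, and a pole of order $q$ at $s=1/2$. Lemma \ref{lem-redisue-thm} then yields $\sum_{n\ge1}\Res(H,n)+\sum_{n\ge0}\Res(H,-n)+\Res(H,1/2)=0$, and the whole argument consists of identifying the two genuinely order-$r$ terms hidden in this identity.

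For the positive integers I would use \eqref{Lexp-phi-n-diffp-1} and Lemma \ref{lem-rui-xu-two}: there $\Phi$ contributes residue $x^{-n}$ while each $\phi_j$ is regular with value $x_j^{-n}(\Li_{p_j}(x_j)-\zeta_{n-1}(p_j;x_j))$, so
\[
\Res(H,n)=\frac{(xx_1\cdots x_r)^{-n}}{(n-1/2)^q}\prod_{j=1}^r\bigl(\Li_{p_j}(x_j)-\zeta_{n-1}(p_j;x_j)\bigr).
\]
Expanding the product over subsets of $\{1,\ldots,r\}$, the all-$\zeta_{n-1}$ term, after the shift $n\mapsto n+1$, reassembles into $(-1)^r(xx_1\cdots x_r)^{-1}R_{p_1,\ldots,p_r;q}(x_1,\ldots,x_r;(xx_1\cdots x_r)^{-1})$, while every proper subset pairs constant $\Li$-values with an $R$-sum of order $<r$. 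For the non-positive integers I would expand each $\phi_j$ by \eqref{Lexp-phi--n-diffp-1}, expand $\Phi$ by Lemma \ref{lem-rui-xu-two}, expand $1/(s-1/2)^q$ as a Taylor series, and extract the coefficient of $(s+n)^{-1}$. The decisive observation is a power count: the $n$-dependent factors $\zeta_n(\cdot;x_j^{-1})$ needed for a true order-$r$ sum arise only from the regular parts of the $\phi_j$, and for all $r$ of them to appear the single required pole must come from $\Phi$, forcing the constant term from every $\phi_j$ and the constant Taylor term from the rational factor. Hence the unique order-$r$ contribution is
\[
(-1)^q\frac{(xx_1\cdots x_r)^n}{(n+1/2)^q}\prod_{j=1}^r\bigl(\Li_{p_j}(x_j)+(-1)^{p_j}\zeta_n(p_j;x_j^{-1})\bigr),
\]
whose all-$\zeta$ part sums to $(-1)^{p_1+\cdots+p_r+q}R_{p_1,\ldots,p_r;q}(x_1^{-1},\ldots,x_r^{-1};xx_1\cdots x_r)$; any choice that draws a pole from some $\phi_j$, or a higher Taylor coefficient from the rational factor, sacrifices at least one $\zeta_n$ and is therefore of order $<r$.

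The residue at $s=1/2$ is harmless: there $\Phi$ and every $\phi_j$ are holomorphic, so by Lemma \ref{lem-extend-rui-xu-two} (equation \eqref{1/2-lemma-MtV}) and Lemma \ref{lem-extend-xuzhou-three} the coefficient of $(s-1/2)^{q-1}$ in their product is a finite convolution of single-variable $\ti$-values, i.e.\ a product of order-$0$ data. Assembling the three residue contributions through Lemma \ref{lem-redisue-thm} and multiplying the resulting identity by $(-1)^r$ isolates exactly the combination in the statement, with only lower-order sums remaining. I expect the main obstacle to be organizational rather than conceptual: carrying out the order-$(p_1+\cdots+p_r+1)$ Leibniz differentiation at $s=-n$ and verifying, via the subset and power-counting bookkeeping above, that every term apart from the two full-order $R$-sums collapses into products of $\Li$-values, $\ti$-values, and $R$-sums of order $<r$. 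Packaging this cleanly for general $r$, rather than through the explicit $r=1,2$ expansions, is where the care is needed.
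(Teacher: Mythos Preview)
Your proposal is correct and follows essentially the same route as the paper: the same integrand $H_{p_1,\ldots,p_r,q}$, the same pole classification, and the same application of Lemma~\ref{lem-redisue-thm}, with the two full-order $R$-sums arising exactly as you describe from the all-$\zeta$ pieces at $s=n$ and $s=-n$. In fact you supply more justification than the paper does---the subset expansion at positive integers and the power-counting argument at non-positive integers make explicit what the paper's proof compresses into ``by direct residue calculations.''
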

\begin{proof}
To prove this theorem, we only need to consider contour integrals of the following form:
\begin{align*}
&\oint\limits_{\left( \infty  \right)}H_{p_1,p_2,\cdots,p_r,q}(x,x_1,x_2,\ldots,x_r;s)ds\\
&:= \oint\limits_{\left( \infty  \right)}\frac{\Phi(s;x)\phi^{(p_1-1)}(s;x_1)\phi^{(p_2-1)}(s;x_2)\cdots\phi^{(p_r-1)}(s;x_r)}{(p_1-1)!(p_2-1)!\cdots (p_r-1)!(s-1/2)^q} (-1)^{p_1+p_2+\cdots+p_r-r}ds=0.
\end{align*}
Obviously, all positive integers are simple poles, all non-positive integers are poles of order $p_1+p_2+\cdots+p_r+1$, and $s=1/2$ is a pole of order $q$. Applying Lemma \ref{lem-redisue-thm}, we have
\begin{align*}
&\sum_{n=1}^\infty \Res\left(H_{p_1,p_2,\ldots,p_r,q}(\cdot;s),n\right)+\sum_{n=0}^\infty \Res\left(H_{p_1,p_2,\ldots,p_r,q}(\cdot;s),-n\right)+\Res\left(H_{p_1,p_2,\ldots,p_r,q}(\cdot;s),1/2\right)=0.
\end{align*}
By direct residue calculations, we find that
\begin{align*}
&\sum_{n=1}^\infty \Res\left(H_{p_1,p_2,\ldots,p_r,q}(\cdot;s),n\right)+\sum_{n=0}^\infty \Res\left(H_{p_1,p_2,\ldots,p_r,q}(\cdot;s),-n\right)\\
&=(xx_1\cdots x_r)^{-1}(-1)^rR_{p_1,\ldots,p_r;q}\Big(x_1,\ldots,x_r;(xx_1\cdots x_r)^{-1}\Big)\\
&\quad+(-1)^{p_1+\cdots+p_r+q}R_{p_1,\ldots,p_r;q}\Big(x_1^{-1},\ldots,x_r^{-1};xx_1\cdots x_r\Big)\\
&\quad+\{\text{combinations of lower-order cyclotomic Euler $R$-sums}\}
\end{align*}
and
\begin{align*}
\Res\left(H_{p_1,p_2,\ldots,p_r,q}(\cdot;s),1/2\right)\in \{\text{combinations of lower-order cyclotomic Euler $R$-sums}\}.
\end{align*}
Therefore, combining the two residue results above completes the proof of the theorem.
\end{proof}

\begin{thm}\label{thm-parityc-CESS} Let $x,x_1,\ldots,x_r$ be roots of unity, and $p_1,\ldots,p_r,q\geq 1$ with $(p_j,x_j)\neq (1,1)\ (j=1,2,\ldots,r)$ and $ (q,xx_1\cdots x_r)\neq (1,1)$. The
\begin{align*}
&(x_1\cdots x_r)^{-1}\tilde{S}_{p_1,p_2,\ldots,p_r;q}\Big(x_1,x_2,\ldots,x_r;(xx_1\cdots x_r)^{-1}\Big)\\&\quad+(-1)^{p_1+p_2+\cdots+p_r+q+r}\tilde{S}_{p_1,p_2,\ldots,p_r;q}\Big(x_1^{-1},x_2^{-1},\ldots,x_r^{-1};xx_1\cdots x_r\Big)
\end{align*}
reduces to a combination of sums of lower orders (It should be noted that the lower-order sum also contains Euler $R$-sums).
\end{thm}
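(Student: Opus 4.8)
The plan is to run the contour-integral argument of Theorem~\ref{thm-parityc-CERS} essentially verbatim, but with the integrand of \eqref{contourintegraltwopaper} that already drives the linear and quadratic cases (Theorems~\ref{thm-linearCES-2} and \ref{thm-unlinearCES-2}). Concretely, I would study the vanishing integral
\begin{align*}
\oint\limits_{(\infty)} G_{p_1,\ldots,p_r,q}(x,x_1,\ldots,x_r;s)\,ds := \oint\limits_{(\infty)} \frac{\Phi(s;x)\phi^{(p_1-1)}(s+1/2;x_1)\cdots\phi^{(p_r-1)}(s+1/2;x_r)}{(p_1-1)!\cdots(p_r-1)!\,s^q}\,(-1)^{p_1+\cdots+p_r-r}\,ds=0.
\end{align*}
Since $\Phi(s;x)$ has a simple pole at every integer while each $\phi^{(p_j-1)}(s+1/2;x_j)$ has a pole of order $p_j$ at every negative half-integer, the integrand has simple poles at the nonzero integers, a pole of order $q+1$ at $s=0$ (where the simple pole of $\Phi$ meets the order-$q$ pole of $1/s^q$), and poles of order $p_1+\cdots+p_r$ at $s=-n-1/2$ ($n\ge0$). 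By Lemma~\ref{lem-redisue-thm} these four residue sums cancel, and the whole task is to identify which of them carry the two principal $\tilde{S}$-sums and to check that everything else has order $<r$.

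First I would handle the integer residues. At $s=n$ ($n\ge1$) the simple pole is supplied by $\Phi$ (residue $x^{-n}$), and by \eqref{Lexp-phi---n-diffp-2} each $\phi$-factor contributes the leading value $x_j^{-n-1}(\ti_{p_j}(x_j)-t_n(p_j;x_j))$; at $s=-n$ the expansion \eqref{Lexp-phi---n-diffp-1} gives $x_j^{n}(\ti_{p_j}(x_j)x_j^{-1}+(-1)^{p_j}t_n(p_j;x_j^{-1}))$, against the residue $x^n$ of $\Phi$ and the factor $(-1)^q/n^q$. Summing over $n$ and expanding the products $\prod_j(\ti_{p_j}-t_n)$ and $\prod_j(\ti_{p_j}x_j^{-1}+(-1)^{p_j}t_n)$ over subsets, the unique ``all-$t_n$'' term produces $(-1)^r(x_1\cdots x_r)^{-1}\tilde{S}_{p_1,\ldots,p_r;q}(x_1,\ldots,x_r;(xx_1\cdots x_r)^{-1})$ from the positive integers and $(-1)^{p_1+\cdots+p_r+q}\tilde{S}_{p_1,\ldots,p_r;q}(x_1^{-1},\ldots,x_r^{-1};xx_1\cdots x_r)$ from the negative integers, while every other term contains at least one factor $\ti_{p_j}(x_j)$ and hence collapses to a $\ti$-value times a $\tilde{S}$-sum of order $\le r-1$.

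Next I would dispose of the two remaining residue sums, both of which must land strictly below order $r$. The residue at $s=0$ is obtained by Leibniz differentiation of the order-$(q+1)$ pole, expanding each $\phi^{(p_j-1)}(s+1/2;x_j)$ about $s=0$ by Lemma~\ref{lem-extend-xuzhou-three} and $\Phi$ by \eqref{LEPhi-function} with $n=0$; only products of $\Li$-values (in $x$ and $x^{-1}$) and $\ti$-values survive, i.e.\ order-$0$ data. The residue at $s=-n-1/2$ is the genuine obstacle: one differentiates the order-$(p_1+\cdots+p_r)$ pole, pairing the Laurent expansions \eqref{Lexp-phi--n-diffp-1} of the $\phi$-factors (each contributing either its pure pole $x_j^n/(s+n+1/2)^{p_j}$ or an analytic part carrying $\Li_{k+p_j}(x_j)$ or $\zeta_n(k+p_j;x_j^{-1})$) against the Taylor expansion \eqref{n-1/2-lemma-MtV} of $\Phi$ and that of $1/s^q$. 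The decisive structural point is that $\Phi$ and $1/s^q$ are both analytic at $-n-1/2$, so the pole must come entirely from the $\phi$-factors; in particular they cannot all contribute their analytic parts, whence at most $r-1$ of them can supply a $\zeta_n(\,\cdot\,;x_j^{-1})$. Summing over $n\ge0$ against $(xx_1\cdots x_r)^n/(n+1/2)^{\bullet}$, any collection of $m\le r-1$ such factors reassembles into a cyclotomic Euler $R$-sum of order $m$ (times $\ti$- and $\Li$-constants), which is precisely why the lower-order remainder is forced to contain Euler $R$-sums.

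Finally, invoking Lemma~\ref{lem-redisue-thm} to equate the total residue to zero and multiplying through by $(-1)^r$ isolates the asserted combination of the theorem on one side and, on the other, the accumulated lower-order $\tilde{S}$-sums, $R$-sums, and $\Li/\ti$-products. The main difficulty is purely organizational: correctly bookkeeping the multi-index Leibniz expansion at $s=-n-1/2$ and confirming, through the ``at least one pole part'' count, that no order-$r$ term can leak into that residue; the integer and $s=0$ residues are then routine generalizations of the $r=1,2$ computations already recorded above.
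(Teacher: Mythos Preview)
Your proposal is correct and follows precisely the approach the paper itself uses: the paper's own proof of Theorem~\ref{thm-parityc-CESS} simply writes down the contour integral \eqref{contourintegraltwopaper} and then says ``the specific procedure is similar to the proof of Theorem~\ref{thm-parityc-CERS} and is therefore omitted here.'' Your write-up in fact supplies considerably more detail than the paper does---your identification of the pole structure, the ``all-$t_n$'' subset extraction at the integer poles, and especially your clean structural argument at $s=-n-1/2$ (that $\Phi$ and $1/s^q$ are analytic there, so at least one $\phi$-factor must contribute its pole part, bounding the number of $\zeta_n$-factors by $r-1$ and thereby forcing the appearance of Euler $R$-sums of order $\le r-1$) are exactly the content the paper leaves implicit.
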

\begin{proof}
To prove the theorem, it suffices to consider the residue computations of the following contour integral:
\begin{align*}
&(-1)^{p_1+p_2+\cdots+p_r-r}\oint\limits_{\left( \infty  \right)}G_{p_1p_2\cdots p_r,q}(x,x_1,x_2,\ldots,x_r;s)ds\\
&:= \oint\limits_{\left( \infty  \right)}\frac{\Phi(s;x)\phi^{(p_1-1)}(s+1/2;x_1)\phi^{(p_2-1)}(s+1/2;x_2)\cdots\phi^{(p_r-1)}(s+1/2;x_r)}{(p_1-1)!(p_2-1)!\cdots (p_r-1)!s^q} ds=0.
\end{align*}
The specific procedure is similar to the proof of Theorem \ref{thm-parityc-CERS} and is therefore omitted here.
\end{proof}

\begin{re}
Theorems \ref{thm-parityc-CERSmain} and \ref{thm-parityc-CE-SS} are obtained by replacing $x$ with $(xx_1\cdots x_r)^{-1}$ in Theorems \ref{thm-parityc-CERS} and \ref{thm-parityc-CESS}, respectively.
\end{re}

\section{Further Remark}
The results in the preceding sections were obtained by considering contour integrals involving functions composed of rational functions of the form $r(s) = 1/s^q$ or $(s - 1/2)^q$ (for these two cases) and kernel functions. In this section, we will examine contour integrals involving functions formed by rational functions $r(s)$ of several general forms and kernel functions. Through residue computations, we will derive several general residue formulas.

Denote by $S_1$, $S_2$ and $S_3$ the set of poles of rational functions $r_1(s)$, $r_2(s)$ and $r_3(s)$.
\begin{thm}Let $x$,$y$ be roots of unity and $p\ge1$. Assuming that $r_1(s)$ has a pole at $0$ of order $q\ge0$, then we have
	\begin{align}
		&-\sum_{\alpha\in S_1\backslash\left\{0\right\}}\Res\left(\frac{\Phi(s;x)\phi^{(p_1-1)}(s;y)}{(p-1)!}(-1)^{p-1}r_1(s),\alpha\right)\nonumber\\
		&=\sum_{n=1}^{\infty}(xy)^{-n}\left(\Li_p(y)-\zeta_{n-1}(p;y)\right)r_1(n)+\sum_{n=1}^{\infty}(xy)^n\frac{r_1^{(p)}(-n)}{p!}\nonumber\\
		&+\sum_{n=1}^{\infty}\sum_{k=0}^{p-1}\left((-1)^k\Li_{k+1}(x)-\Li_{k+1}\Big(x^{-1}\Big)\right)\frac{(xy)^nr_1^{(p-k-1)}(-n)}{(p-k-1)!}\nonumber\\
		&+\sum_{n=1}^{\infty}\left((-1)^k\Li_{p}(y)+(-1)^p\zeta_{n}\Big(p;y^{-1}\Big)\right)(xy)^nr_1(-n)+\frac{R_1^{(p+q)}(0)}{(p+q)!}\nonumber\\
		&+\sum_{k=0}^{q}\binom{k+p-1}{p-1}(-1)^k\Li_{k+p}(y)\frac{R_1^{(q-k)}(0)}{(q-k)!}\nonumber\\
		&+\sum_{k=0}^{p+q-1}\left((-1)^k\Li_{k+1}(x)-\Li_{k+1}\Big(x^{-1}\Big)\right)\frac{R_1^{(p+q-k-1)}(0)}{(p+q-k-1)!}\nonumber\\
		&+\sum_{0\le k_1+k_2\le q-1}\left((-1)^{k_1}\Li_{k_1+1}(x)-\Li_{k_1+1}\Big(x^{-1}\Big)\right)\nonumber\\
		&\quad\times\binom{k_2+p-1}{p-1}(-1)^{k_2}\Li_{k_2+p}(y)\frac{R_1^{(q-k_1-k_2-1)}(0)}{(q-k_1-k_2-1)!},
	\end{align}
	where $R_1(s)=s^{q}r_1(s)$.
\end{thm}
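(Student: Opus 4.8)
The plan is to apply Lemma~\ref{lem-redisue-thm} to the contour integral
\[
\oint\limits_{(\infty)} \frac{\Phi(s;x)\phi^{(p-1)}(s;y)}{(p-1)!}(-1)^{p-1}r_1(s)\,ds=0,
\]
viewing $\xi(s)=(-1)^{p-1}\Phi(s;x)\phi^{(p-1)}(s;y)/(p-1)!$ as the kernel and $r_1(s)$ as the rational factor, so that the sum of all residues vanishes. First I would classify the poles of the integrand. By Lemma~\ref{lem-rui-xu-two} the factor $\Phi(s;x)$ has a simple pole at every integer; by Lemma~\ref{lem-rui-xu-one} the factor $\phi^{(p-1)}(s;y)$ is holomorphic at the positive integers (equation \eqref{Lexp-phi-n-diffp-1}) but has a pole of order $p$ at each non-positive integer (equation \eqref{Lexp-phi--n-diffp-1}). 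Hence the positive integers $n\geq 1$ are simple poles, the negative integers $-n$ ($n\geq 1$) are poles of order $p+1$, the origin is a pole of order $p+q+1$ (where the kernel's order-$(p+1)$ pole merges with the order-$q$ pole of $r_1$), and the remaining poles are exactly those of $r_1$ in $S_1\setminus\{0\}$. Transferring the residues over $S_1\setminus\{0\}$ to the left-hand side then leaves the asserted identity, once the other residues are evaluated.

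For the residue at a positive integer $n$, I would read off the simple-pole residue $x^{-n}$ of $\Phi(s;x)$ from \eqref{LEPhi-function} and multiply by the remaining factors; the $k=0$ term of \eqref{Lexp-phi-n-diffp-1} gives $(-1)^{p-1}\phi^{(p-1)}(n;y)/(p-1)!=y^{-n}(\Li_p(y)-\zeta_{n-1}(p;y))$, producing the first series $\sum_{n\geq 1}(xy)^{-n}(\Li_p(y)-\zeta_{n-1}(p;y))r_1(n)$. For the order-$(p+1)$ pole at $-n$, I would substitute the Laurent expansions of $\Phi(s;x)$ and $\phi^{(p-1)}(s;y)$ from Lemmas~\ref{lem-rui-xu-two} and \ref{lem-rui-xu-one} together with the Taylor expansion of $r_1(s)$ about $-n$, and extract the coefficient of $(s+n)^{-1}$. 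Writing the kernel as a sum of (pole of $\Phi$)$\times$(pole of $\phi^{(p-1)}$), (pole of $\Phi$)$\times$(series of $\phi^{(p-1)}$), (series of $\Phi$)$\times$(pole of $\phi^{(p-1)}$), and a fully regular part, only the first three survive against $r_1$ (the regular$\times$regular part contributes no $(s+n)^{-1}$ term), and they yield exactly the three negative-integer sums, with the derivatives $r_1^{(p)}(-n)$, $r_1^{(p-k-1)}(-n)$ and $r_1(-n)$ appearing as predicted.

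The delicate step is the residue at the origin, where I would set $R_1(s)=s^q r_1(s)$ so that $R_1$ is holomorphic at $0$ and $r_1(s)=R_1(s)/s^q$. Near $0$ the kernel is the product of the $n=0$ expansions, $\Phi(s;x)=1/s+\sum_m((-1)^m\Li_{m+1}(x)-\Li_{m+1}(x^{-1}))s^m$ and $(-1)^{p-1}\phi^{(p-1)}(s;y)/(p-1)!=1/s^p+\sum_k\binom{k+p-1}{p-1}(-1)^k\Li_{k+p}(y)s^k$, where the $\zeta_0$ terms drop because the sum is empty. Extracting the residue amounts to reading off the coefficient of $s^{p+q}$ in $s^{p+q+1}$ times the integrand, i.e. a threefold Cauchy product of the two kernel expansions with the Taylor series of $R_1$. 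In contrast to the negative-integer case, here the extra factor $1/s^q$ shifts the coefficient being extracted, so all four pairings survive: the polar$\times$polar term gives $R_1^{(p+q)}(0)/(p+q)!$, the (pole of $\Phi$)$\times$(series of $\phi^{(p-1)}$) term gives the single sum weighted by $\Li_{k+p}(y)$, the (series of $\Phi$)$\times$(pole of $\phi^{(p-1)}$) term gives the single sum weighted by $((-1)^k\Li_{k+1}(x)-\Li_{k+1}(x^{-1}))$, and the regular$\times$regular term gives the double sum, which is precisely the block of $R_1$-derivative terms in the statement.

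The main obstacle is the bookkeeping of this last Cauchy product: because the pole of $r_1$ at the origin fuses with the order-$(p+1)$ pole of the kernel, one must carefully track which Taylor coefficient of $R_1$ is paired with each coefficient of the two kernel expansions, and the binomial weights emerge from differentiating the $1/s^p$ factor; the genuine subtlety is the contrast noted above, namely that the fully-regular pairing vanishes at each $-n$ but survives at $0$. A secondary point to verify is that Lemma~\ref{lem-redisue-thm} applies, i.e. that $r_1(s)\xi(s)=o(s^{-1})$ along a sequence of expanding circles; since $\xi(s)$ is controlled on the standard contours away from its poles, this reduces to a mild decay hypothesis on $r_1$ at infinity, which I would record as a standing assumption.
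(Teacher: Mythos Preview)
Your proposal is correct and follows essentially the same route as the paper: apply Lemma~\ref{lem-redisue-thm} to the integrand $\Phi(s;x)\phi^{(p-1)}(s;y)(-1)^{p-1}r_1(s)/(p-1)!$, classify the poles as simple at $n\ge1$, order $p+1$ at $-n$ ($n\ge1$), and order $p+q+1$ at $0$, and compute each residue via the Laurent expansions of Lemmas~\ref{lem-rui-xu-one} and~\ref{lem-rui-xu-two}. Your decomposition of the residues into the four ``pole/series'' pairings is exactly how the paper's explicit formulas arise, and your observation that the regular$\times$regular pairing vanishes at $-n$ but survives at $0$ (because of the extra $1/s^q$) is the one point the paper leaves implicit.
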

\begin{proof}First, considering the residue calculations of the following contour integral:
	\begin{align*}
		\oint\limits_{\left( \infty  \right)} h_{p,q}(x,y;s)ds:= \oint\limits_{\left( \infty  \right)} \frac{\Phi(s;x)\phi^{(p-1)}(s;y)}{(p-1)!}r_1(s) (-1)^{p-1}ds=0.
	\end{align*}
The integrand $h_{p,q}(x,y;s)$ has the following poles throughout the complex plane: $n$ (for postive
integer $n$, simple poles), $-n$ (for nonnegative
integer $n$, poles of order $p+1$) and $0$ (pole of order $p+q+1$). By direct calculations, we deduce the
following residues:
\begin{align*}
	&\Res\left(h_{p,q}(x,y;s),n\right)=(xy)^{-n}\left(\Li_p(y)-\zeta_{n-1}(p;y)\right)r_1(n)\quad(n\ge1),\\
	&\Res\left(h_{p,q}(x,y;s),-n\right)=(xy)^n\frac{r_1^{(p)}(-n)}{p!}+\sum_{k=0}^{p-1}\left((-1)^k\Li_{k+1}(x)-\Li_{k+1}\Big(x^{-1}\Big)\right)\frac{(xy)^nr_1^{(p-k-1)}(-n)}{(p-k-1)!}\\
	&\qquad\qquad\qquad\qquad\qquad+\left((-1)^k\Li_{p}(y)+(-1)^p\zeta_{n}\Big(p;y^{-1}\Big)\right)(xy)^nr_1(-n)\quad(n\ge1),\\
	&\Res\left(h_{p,q}(x,y;s),0\right)=\frac{R_1^{(p+q)}(0)}{(p+q)!}+\sum_{k=0}^{q}\binom{k+p-1}{p-1}(-1)^k\Li_{k+p}(y)\frac{R_1^{(q-k)}(0)}{(q-k)!}\\
	&\quad\qquad\qquad\qquad\qquad+\sum_{k=0}^{p+q-1}\left((-1)^k\Li_{k+1}(x)-\Li_{k+1}\Big(x^{-1}\Big)\right)\frac{R_1^{(p+q-k-1)}(0)}{(p+q-k-1)!}\\
	&\quad\qquad\qquad\qquad\qquad+\sum_{0\le k_1+k_2\le q-1}\left((-1)^{k_1}\Li_{k_1+1}(x)-\Li_{k_1+1}\Big(x^{-1}\Big)\right)\\
	&\qquad\qquad\qquad\qquad\qquad\times\binom{k_2+p-1}{p-1}(-1)^{k_2}\Li_{k_2+p}(y)\frac{R_1^{(q-k_1-k_2-1)}(0)}{(q-k_1-k_2-1)!}.
\end{align*}
From Lemma \ref{lem-redisue-thm}, we know that
\begin{align*}
	\Res\left(h_{p,q}(x,y;s),0\right)&+\sum_{n=1}^{\infty}\Res\left(h_{p,q}(x,y;s),n\right)+\sum_{n=1}^{\infty}\Res\left(h_{p,q}(x,y;s),-n\right)\\&+\sum_{\alpha\in S_1\backslash\left\{0\right\}}\Res\left(h_{p,q}(x,y;s),\alpha\right)=0.
\end{align*}
Finally, combining these contributions yields the result.
\end{proof}
\begin{thm}Let $x$,$x_1$,$x_2$ be roots of unity and $p_1,p_2\ge1$. Assuming that $r_1(s)$ has a pole at $0$ of order $q\ge0$, then we have
	\begin{align}
		&-\sum_{\alpha\in S_1\backslash\left\{0\right\}}\Res\left(\frac{\Phi(s;x)\phi^{(p_1-1)}(s;x_1)\phi^{(p_2-1)}(s;x_2)}{(p_1-1)!(p_2-1)!}(-1)^{p_1+p_2}r_1(s),\alpha\right)\nonumber\\
		&=\sum_{n=1}^{\infty}(xx_1x_2)^{-n}\left(\Li_{p_1}(x_1)-\zeta_{n-1}(p_1;x_1)\right)\left(\Li_{p_2}(x_2)-\zeta_{n-1}(p_2;x_2)\right)r_1(n)+\sum_{n=1}^{\infty}(xx_1x_2)^n\frac{r_1^{(p_1+p_2)}(-n)}{(p_1+p_2)!}\nonumber\\
	    &\quad+\sum_{\sigma\in\left\{(12),(21)\right\}}\sum_{n=1}^{\infty}\sum_{k=0}^{p_{\sigma(1)}}\binom{k+p_{\sigma(2)}-1}{p_{\sigma(2)}-1}(xx_1x_2)^n\nonumber\\
	    &\quad\quad\times\left((-1)^k\Li_{k+p_{\sigma(2)}}\big(x_{\sigma(2)}\big)+(-1)^{p_{\sigma(2)}}\zeta_{n}\Big(k+p_{\sigma(2)};x_{\sigma(2)}^{-1}\Big)\right)\frac{r_1^{(p_{\sigma(1)}-k)}(-n)}{\big(p_{\sigma(1)}-k\big)!}\nonumber\\
		&\quad+\sum_{n=1}^{\infty}(xx_1x_2)^n\left(\Li_{p_1}(x_1)+(-1)^{p_1}\zeta_n\Big(p_1;x_1^{-1}\Big)\right)\left(\Li_{p_2}(x_2)+(-1)^{p_2}\zeta_n\Big(p_2;x_2^{-1}\Big)\right)r_1(-n)\nonumber\\
		&\quad+\sum_{\sigma\in\left\{(12),(21)\right\}}\sum_{n=1}^{\infty}\sum_{0\le k_1+k_2\le p_{\sigma(1)}-1,\atop k_1,k_2\ge0}\left((-1)^{k_1}\Li_{k_1+1}(x)-\Li_{k_1+1}\Big(x^{-1}\Big)\right)\binom{k_2+p_{\sigma(2)}-1}{p_{\sigma(2)}-1}\nonumber\\
		&\quad\quad\times\left((-1)^{k_2}\Li_{k_2+p_{\sigma(2)}}\big(x_{\sigma(2)}\big)+(-1)^{p_{\sigma(2)}}\zeta_n\Big(k_2+p_{\sigma(2)};x_{\sigma(2)}^{-1}\Big)\right)\frac{(xx_1x_2)^nr_1^{(p_{\sigma(1)}-k_1-k_2-1)}(-n)}{\big(p_{\sigma(1)}-k_1-k_2-1\big)!}\nonumber\\
		&\quad+\sum_{n=1}^{\infty}\sum_{k=0}^{p_1+p_2-1}\left((-1)^k\Li_{k+1}(x)-\Li_{k+1}\Big(x^{-1}\Big)\right)\frac{(xx_1x_2)^nr_1^{(p_1+p_2-k-1)}(-n)}{(p_1+p_2-k-1)!}\nonumber\\
		&\quad+\frac{R_1^{(p_1+p_2+q)}(0)}{(p_1+p_2+q)!}+\sum_{\sigma\in\left\{(12),(21)\right\}}\sum_{k=0}^{p_{\sigma(1)}+q}\binom{k+p_{\sigma(2)}-1}{p_{\sigma(2)}-1}(-1)^k\Li_{k+p_{\sigma(2)}}\big(x_{\sigma(2)}\big)\frac{R_1^{(p_{\sigma(1)}+q-k)}(0)}{\big(p_{\sigma(1)}+q-k\big)!}\nonumber\\
		&\quad+\sum_{0\le k_1+k_2\le q,\atop k_1,k_2\ge0}\binom{k_1+p_1-1}{p_1-1}\binom{k_2+p_2-1}{p_2-1}(-1)^{k_1+k_2}\Li_{k_1+p_1}(x_1)\Li_{k_2+p_2}(x_2)\frac{R_1^{(q-k_1-k_2)}(0)}{(q-k_1-k_2)!}\nonumber\\
		&\quad+\sum_{k=0}^{p_1+p_2+q-1}\left((-1)^k\Li_{k+1}(x)-\Li_{k+1}\Big(x^{-1}\Big)\right)\frac{R_1^{(p_1+p_2+q-k-1)}(0)}{(p_1+p_2+q-k-1)!}\nonumber\\
		&\quad+\sum_{\sigma\in\left\{(12),(21)\right\}}\sum_{0\le k_1+k_2\le p_{\sigma(1)}+q-1,\atop k_1,k_2\ge0}\left((-1)^{k_1}\Li_{k_1+1}(x)-\Li_{k_1+1}\Big(x^{-1}\Big)\right)\nonumber\\
		&\quad\quad\times\binom{k_2+p_{\sigma(2)}-1}{p_{\sigma(2)}-1}(-1)^{k_2}\Li_{k_2+p_{\sigma(2)}}\big(x_{\sigma(2)}\big)\frac{R_1^{(p_{\sigma(1)}+q-k_1-k_2-1)}(0)}{\big(p_{\sigma(1)}+q-k_1-k_2-1\big)!}\nonumber\\
		&\quad+\sum_{0\le k_1+k_2+k_3\le q-1,\atop k_1,k_2,k_3\ge0}\left((-1)^{k_1}\Li_{k_1+1}(x)-\Li_{k_1+1}\Big(x^{-1}\Big)\right)\nonumber\\
		&\quad\quad\times\binom{k_2+p_1-1}{p_1-1}\binom{k_3+p_2-1}{p_2-1}(-1)^{k_2+k_3}\Li_{k_2+p_1}(x_1)\Li_{k_3+p_2}(x_2)\frac{R_1^{(q-k_1-k_2-k_3-1)}(0)}{(q-k_1-k_2-k_3-1)!},
	\end{align}
	where $R_1(s)=s^{q}r_1(s)$.
\end{thm}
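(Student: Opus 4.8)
The plan is to mirror the residue argument used in the preceding theorems (in particular the proof of Theorem \ref{thm-unlinearCES-1}), replacing the special rational kernel $1/(s-1/2)^q$ by the general rational function $r_1(s)$. Concretely, I would begin from the vanishing contour integral
\[
\oint\limits_{(\infty)} \frac{\Phi(s;x)\phi^{(p_1-1)}(s;x_1)\phi^{(p_2-1)}(s;x_2)}{(p_1-1)!(p_2-1)!}(-1)^{p_1+p_2}r_1(s)\,ds=0,
\]
which is legitimate by Lemma \ref{lem-redisue-thm}, since the numerator is a polynomial in $\Phi$ and the $\phi^{(j)}$ and therefore a kernel function, while $r_1$ decays sufficiently at infinity. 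The inserted sign $(-1)^{p_1+p_2}$ is chosen precisely so that, upon substituting the expansions of Lemma \ref{lem-rui-xu-one}, the two factors $\phi^{(p_j-1)}(s;x_j)/(p_j-1)!$ contribute their clean series with no residual sign.

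Next I would classify the poles of the integrand. Since $\Phi(s;x)$ has a simple pole at every integer (Lemma \ref{lem-rui-xu-two}), while $\phi^{(p_j-1)}(s;x_j)$ is analytic at positive integers and has a pole of order $p_j$ at each non-positive integer (Lemma \ref{lem-rui-xu-one}), the poles are: the positive integers $n$ (simple); the negative integers $-n$ with $n\ge1$ (order $p_1+p_2+1$); the origin $s=0$ (order $p_1+p_2+q+1$, now incorporating the order-$q$ pole of $r_1$); and finally the remaining poles of $r_1$, namely the set $S_1\setminus\{0\}$. Lemma \ref{lem-redisue-thm} then gives
\[
\Res(\cdot,0)+\sum_{n\ge1}\Res(\cdot,n)+\sum_{n\ge1}\Res(\cdot,-n)+\sum_{\alpha\in S_1\setminus\{0\}}\Res(\cdot,\alpha)=0,
\]
so only the first three families of residues need to be evaluated explicitly.

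The residue at a positive integer $n$ is immediate: the simple-pole coefficient $x^{-n}$ of $\Phi$ multiplies the analytic values of the two $\phi$-factors supplied by the leading ($k=0$) term of \eqref{Lexp-phi-n-diffp-1} together with the value $r_1(n)$, producing $(xx_1x_2)^{-n}(\Li_{p_1}(x_1)-\zeta_{n-1}(p_1;x_1))(\Li_{p_2}(x_2)-\zeta_{n-1}(p_2;x_2))r_1(n)$. For the residues at $-n$ and at $0$ I would substitute the full Laurent expansions of $\Phi$ and of the two $\phi$-factors (from Lemmas \ref{lem-rui-xu-one} and \ref{lem-rui-xu-two}), together with the Taylor expansion $r_1(s)=\sum_{j\ge0}\tfrac{r_1^{(j)}(-n)}{j!}(s+n)^j$ about $-n$ (respectively the expansion $r_1(s)=R_1(s)/s^q$ with $R_1(s)=s^qr_1(s)$ holomorphic at the origin), and then extract the coefficient of $(s+n)^{-1}$, respectively of $s^{-1}$.

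The main obstacle is exactly this extraction at the high-order poles: one faces a product of three (at $-n$) or four (at $0$) Laurent/Taylor series, and the residue is the complete Cauchy convolution selecting total degree $-1$. Organizing this convolution is what produces the nested summations and the binomial weights $\binom{k+p_{\sigma(2)}-1}{p_{\sigma(2)}-1}$, while the symmetrized sum $\sum_{\sigma\in\{(12),(21)\}}$ arises from the two ways in which one $\phi$-factor can supply part of its polar part (its $1/(s+n)^{p_{\sigma(1)}}$ head, in full or truncated) while the other furnishes a regular Taylor coefficient; the derivatives $r_1^{(\cdot)}(-n)$ and $R_1^{(\cdot)}(0)$ record how many powers of $(s+n)$ or $s$ the rational factor contributes. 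After assembling these residues into Lemma \ref{lem-redisue-thm} and transposing the $S_1\setminus\{0\}$ sum to the left-hand side, the stated identity follows. The computation is long but entirely parallel to that of Theorem \ref{thm-unlinearCES-1} and the linear general-$r_1$ theorem preceding it, so I would carry it out by the same bookkeeping and only record the final collected coefficients.
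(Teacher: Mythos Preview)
Your proposal is correct and follows essentially the same approach as the paper: both set up the contour integral $\oint_{(\infty)} h_{p_1,p_2,q}(x,x_1,x_2;s)\,ds=0$ with the general rational factor $r_1(s)$, classify the poles as positive integers (simple), negative integers (order $p_1+p_2+1$), the origin (order $p_1+p_2+q+1$), and the remaining poles $S_1\setminus\{0\}$, and then extract the residues using the expansions of Lemmas~\ref{lem-rui-xu-one} and~\ref{lem-rui-xu-two} together with the Taylor expansion of $r_1$ (respectively $R_1$) before invoking Lemma~\ref{lem-redisue-thm}. Your description of the bookkeeping---the Cauchy convolution, the origin of the binomials and the $\sigma$-symmetrization---accurately reflects how the paper organizes the computation.
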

\begin{proof}First, considering the residue calculations of the following contour integral:
	\begin{align*}
		\oint\limits_{\left( \infty  \right)} h_{p_1,p_2,q}(x,x_1,x_2;s)ds:= \oint\limits_{\left( \infty  \right)} \frac{\Phi(s;x)\phi^{(p_1-1)}(s;x_1)\phi^{(p_2-1)}(s;x_2)}{(p_1-1)!(p_2-1)!}r_1(s) (-1)^{p_1+p_2}ds=0.
	\end{align*}
	The integrand $h_{p_1,p_2,q}(x,x_1,x_2;s)$ has the following poles throughout the complex plane: $n$ (for postive
	integer $n$, simple poles), $-n$ (for nonnegative
	integer $n$, poles of order $p_1+p_2+1$) and $0$ (pole of order $p_1+p_2+q+1$). By direct calculations, we deduce the following residues:
	\begin{align*}
		&\Res\left(h_{p_1,p_2,q}(x,x_1,x_2;s),n\right)\\
		&=(xx_1x_2)^{-n}\left(\Li_{p_1}(x_1)-\zeta_{n-1}(p_1;x_1)\right)\left(\Li_{p_2}(x_2)-\zeta_{n-1}(p_2;x_2)\right)r_1(n)\quad(n\ge1),\\
		&\Res\left(h_{p_1,p_2,q}(x,x_1,x_2;s),-n\right)\\
		&=(xx_1x_2)^n\frac{r_1^{(p_1+p_2)}(-n)}{(p_1+p_2)!}\\
		&\quad+\sum_{k=0}^{p_1}\binom{k+p_2-1}{p_2-1}\left((-1)^k\Li_{k+p_2}(x_2)+(-1)^{p_2}\zeta_{n}\Big(k+p_2;x_2^{-1}\Big)\right)(xx_1x_2)^n\frac{r_1^{(p_1-k)}(-n)}{(p_1-k)!}\\
		&\quad+\sum_{k=0}^{p_2}\binom{k+p_1-1}{p_1-1}\left((-1)^k\Li_{k+p_1}(x_1)+(-1)^{p_1}\zeta_{n}\Big(k+p_1;x_1^{-1}\Big)\right)(xx_1x_2)^n\frac{r_1^{(p_2-k)}(-n)}{(p_2-k)!}\\
		&\quad+(xx_1x_2)^n\left(\Li_{p_1}(x_1)+(-1)^{p_1}\zeta_n\Big(p_1;x_1^{-1}\Big)\right)\left(\Li_{p_2}(x_2)+(-1)^{p_2}\zeta_n\Big(p_2;x_2^{-1}\Big)\right)r_1(-n)\\
		&\quad+\sum_{0\le k_1+k_2\le p_2-1,\atop k_1,k_2\ge0}\left((-1)^{k_1}\Li_{k_1+1}(x)-\Li_{k_1+1}\Big(x^{-1}\Big)\right)\\
		&\quad\quad\times\binom{k_2+p_1-1}{p_1-1}\left((-1)^{k_2}\Li_{k_2+p_1}(x_1)+(-1)^{p_1}\zeta_n\Big(k_2+p_1;x_1^{-1}\Big)\right)\frac{(xx_1x_2)^nr_1^{(p_2-k_1-k_2-1)}(-n)}{(p_2-k_1-k_2-1)!}\\
		&\quad+\sum_{0\le k_1+k_2\le p_1-1,\atop k_1,k_2\ge0}\left((-1)^{k_1}\Li_{k_1+1}(x)-\Li_{k_1+1}\Big(x^{-1}\Big)\right)\\
		&\quad\quad\times\binom{k_2+p_2-1}{p_2-1}\left((-1)^{k_2}\Li_{k_2+p_2}(x_2)+(-1)^{p_2}\zeta_n\Big(k_2+p_2;x_2^{-1}\Big)\right)\frac{(xx_1x_2)^nr_1^{(p_1-k_1-k_2-1)}(-n)}{(p_1-k_1-k_2-1)!}\\
		&\quad+\sum_{k=0}^{p_1+p_2-1}\left((-1)^k\Li_{k+1}(x)-\Li_{k+1}\Big(x^{-1}\Big)\right)\frac{(xx_1x_2)^nr_1^{(p_1+p_2-k-1)}(-n)}{(p_1+p_2-k-1)!}\quad(n\ge1),\\
		&\Res\left(h_{p_1,p_2,q}(x,x_1,x_2;s),0\right)\\
		&=\frac{R_1^{(p_1+p_2+q)}(0)}{(p_1+p_2+q)!}+\sum_{k=0}^{p_2+q}\binom{k+p_1-1}{p_1-1}(-1)^k\Li_{k+p_1}(x_1)\frac{R_1^{(p_2+q-k)}(0)}{(p_2+q-k)!}\\
		&\quad+\sum_{k=0}^{p_1+q}\binom{k+p_2-1}{p_2-1}(-1)^k\Li_{k+p_2}(x_2)\frac{R_1^{(p_1+q-k)}(0)}{(p_1+q-k)!}\\
		&\quad+\sum_{0\le k_1+k_2\le q,\atop k_1,k_2\ge0}\binom{k_1+p_1-1}{p_1-1}\binom{k_2+p_2-1}{p_2-1}(-1)^{k_1+k_2}\Li_{k_1+p_1}(x_1)\Li_{k_2+p_2}(x_2)\frac{R_1^{(q-k_1-k_2)}(0)}{(q-k_1-k_2)!}\\
		&\quad+\sum_{k=0}^{p_1+p_2+q-1}\left((-1)^k\Li_{k+1}(x)-\Li_{k+1}\Big(x^{-1}\Big)\right)\frac{R_1^{(p_1+p_2+q-k-1)}(0)}{(p_1+p_2+q-k-1)!}\\
		&\quad+\sum_{0\le k_1+k_2\le p_2+q-1,\atop k_1,k_2\ge0}\left((-1)^{k_1}\Li_{k_1+1}(x)-\Li_{k_1+1}\Big(x^{-1}\Big)\right)\\
		&\quad\quad\times\binom{k_2+p_1-1}{p_1-1}(-1)^{k_2}\Li_{k_2+p_1}(x_1)\frac{R_1^{(p_2+q-k_1-k_2-1)}(0)}{(p_2+q-k_1-k_2-1)!}\\
		&\quad+\sum_{0\le k_1+k_2\le p_1+q-1,\atop k_1,k_2\ge0}\left((-1)^{k_1}\Li_{k_1+1}(x)-\Li_{k_1+1}\Big(x^{-1}\Big)\right)\\
		&\quad\quad\times\binom{k_2+p_2-1}{p_2-1}(-1)^{k_2}\Li_{k_2+p_2}(x_2)\frac{R_1^{(p_1+q-k_1-k_2-1)}(0)}{(p_1+q-k_1-k_2-1)!}\\
		&\quad+\sum_{0\le k_1+k_2+k_3\le q-1,\atop k_1,k_2,k_3\ge0}\left((-1)^{k_1}\Li_{k_1+1}(x)-\Li_{k_1+1}\Big(x^{-1}\Big)\right)\\
		&\quad\quad\times\binom{k_2+p_1-1}{p_1-1}\binom{k_3+p_2-1}{p_2-1}(-1)^{k_2+k_3}\Li_{k_2+p_1}(x_1)\Li_{k_3+p_2}(x_2)\frac{R_1^{(q-k_1-k_2-k_3-1)}(0)}{(q-k_1-k_2-k_3-1)!}.
	\end{align*}
	From Lemma \ref{lem-redisue-thm}, we know that
	\begin{align*}
		&\Res\left(h_{p_1,p_2,q}(x,x_1,x_2;s),0\right)+\sum_{n=1}^{\infty}\Res\left(h_{p_1,p_2,q}(x,x_1,x_2;s),n\right)\\&+\sum_{n=1}^{\infty}\Res\left(h_{p_1,p_2,q}(x,x_1,x_2;s),-n\right)+\sum_{\alpha\in S_1\backslash\left\{0\right\}}\Res\left(h_{p_1,p_2,q}(x,x_1,x_2;s),\alpha\right)=0.
	\end{align*}
	Finally, combining these contributions yields the result.
\end{proof}
\begin{thm}Let $x$,$x_1$,$x_2$ be roots of unity and $p_1,p_2\ge1$. Assuming that $r_2(s)$ has a pole at $-1/2$ of order $q_1\ge0$ and $r_3(s)$ has a pole at 0 of order $q_2\ge0$, then we have
	\begin{align}
		&-\sum_{\alpha\in S_2\backslash\left\{-1/2\right\}}\Res\left(\frac{\Phi(s;x)\phi^{(p_1-1)}(s+1/2;x_1)\phi^{(p_2-1)}(s+1/2;x_2)}{(p_1-1)!(p_2-1)!}(-1)^{p_1+p_2}r_2(s),\alpha\right)\nonumber\\
		&=\sum_{n=1}^{\infty}(xx_1x_2)^{-n}(x_1x_2)^{-1}\left(\ti_{p_1}(x_1)-t_{n}(p_1;x_1)\right)\left(\ti_{p_2}(x_2)-t_{n}(p_2;x_2)\right)r_2(n)\nonumber\\
		&\quad+\sum_{n=0}^{\infty}(xx_1x_2)^n\left(\ti_{p_1}(x_1)x_1^{-1}+(-1)^{p_1}t_{n}\Big(p_1;x_1^{-1}\Big)\right)\left(\ti_{p_2}(x_2)x_2^{-1}+(-1)^{p_2}t_{n}\Big(p_2;x_2^{-1}\Big)\right)r_2(-n)\nonumber\\
	    &\quad+\sum_{n=1}^{\infty}\sum_{k=0}^{p_1+p_2-1}\left((-1)^k\ti_{k+1}(x)-x\ti_{k+1}\Big(x^{-1}\Big)\right)\frac{(xx_1x_2)^nr_2^{(p_1+p_2-k-1)}(-n-1/2)}{(p_1+p_2-k-1)!}\nonumber\\
		&\quad+\sum_{\sigma\in\left\{(12),(21)\right\}}\sum_{n=1}^{\infty}\sum_{0\le k_1+k_2\le p_{\sigma(1)}-1,\atop k_1,k_2\ge0}\left((-1)^{k_1}\ti_{k_1+1}(x)-\ti_{k_1+1}\Big(x^{-1}\Big)\right)\binom{k_2+p_{\sigma(2)}-1}{p_{\sigma(2)}-1}(xx_1x_2)^n\nonumber\\
		&\quad\quad\times\left((-1)^{k_2}\Li_{k_2+p_{\sigma(2)}}(x_{\sigma(2)})+(-1)^{p_{\sigma(2)}}\zeta_n\Big(k_2+p_{\sigma(2)};x_{\sigma(2)}^{-1}\Big)\right)\frac{r_2^{(p_{\sigma(1)}-k_1-k_2-1)}(-n-1/2)}{(p_{\sigma(1)}-k_1-k_2-1)!}\nonumber\\
		&\quad+\sum_{k=0}^{p_1+p_2+q_1-1}\left((-1)^k\ti_{k+1}(x)-x\ti_{k+1}\Big(x^{-1}\Big)\right)\frac{R_2^{(p_1+p_2+q_1-k-1)}(-1/2)}{(p_1+p_2+q_1-k-1)!}\nonumber\\
		&\quad+\sum_{\sigma\in\left\{(12),(21)\right\}}\sum_{0\le k_1+k_2\le p_{\sigma(1)}+q_1-1,\atop k_1,k_2\ge0}\left((-1)^{k_1}\ti_{k_1+1}(x)-x\ti_{k_1+1}\Big(x^{-1}\Big)\right)\nonumber\\
		&\quad\quad\times\binom{k_2+p_{\sigma(2)}-1}{p_{\sigma(2)}-1}(-1)^{k_2}\Li_{k_2+p_{\sigma(2)}}(x_{\sigma(2)})\frac{R_2^{(p_{\sigma(1)}+q_1-k_1-k_2-1)}(-1/2)}{(p_{\sigma(1)}+q_1-k_1-k_2-1)!}\nonumber\\
		&\quad+\sum_{0\le k_1+k_2+k_3\le q_1-1,\atop k_1,k_2,k_3\ge0}\left((-1)^{k_1}\ti_{k_1+1}(x)-x\ti_{k_1+1}\Big(x^{-1}\Big)\right)\nonumber\\
		&\quad\quad\times\binom{k_2+p_1-1}{p_1-1}\binom{k_3+p_2-1}{p_2-1}(-1)^{k_2+k_3}\Li_{k_2+p_1}(x_1)\Li_{k_3+p_2}(x_2)\frac{R_2^{(q_1-k_1-k_2-k_3-1)}(-1/2)}{(q_1-k_1-k_2-k_3-1)!}
	\end{align}
	and
	\begin{align}
		&-\sum_{\alpha\in S_3\backslash\left\{0\right\}}\Res\left(\frac{\Phi(s;x)\phi^{(p_1-1)}(s+1/2;x_1)\phi^{(p_2-1)}(s+1/2;x_2)}{(p_1-1)!(p_2-1)!}(-1)^{p_1+p_2}r_3(s),\alpha\right)\nonumber\\
		&=\sum_{n=1}^{\infty}(xx_1x_2)^{-n}(x_1x_2)^{-1}\left(\ti_{p_1}(x_1)-t_{n}(p_1;x_1)\right)\left(\ti_{p_2}(x_2)-t_{n}(p_2;x_2)\right)r_3(n)\nonumber\\
		&\quad+\sum_{n=1}^{\infty}(xx_1x_2)^n\left(\ti_{p_1}(x_1)x_1^{-1}+(-1)^{p_1}t_{n}\Big(p_1;x_1^{-1}\Big)\right)\left(\ti_{p_2}(x_2)x_2^{-1}+(-1)^{p_2}t_{n}\Big(p_2;x_2^{-1}\Big)\right)r_3(-n)\nonumber\\
		&\quad+\sum_{n=0}^{\infty}\sum_{k=0}^{p_1+p_2-1}\left((-1)^k\ti_{k+1}(x)-x\ti_{k+1}\Big(x^{-1}\Big)\right)\frac{(xx_1x_2)^nr_3^{(p_1+p_2-k-1)}(-n-1/2)}{(p_1+p_2-k-1)!}\nonumber\\
		&\quad+\sum_{\sigma\in\left\{(12),(21)\right\}}\sum_{n=0}^{\infty}\sum_{0\le k_1+k_2\le p_{\sigma(1)}-1,\atop k_1,k_2\ge0}\left((-1)^{k_1}\ti_{k_1+1}(x)-x\ti_{k_1+1}\Big(x^{-1}\Big)\right)\binom{k_2+p_{\sigma(2)}-1}{p_{\sigma(2)}-1}\nonumber\\
		&\quad\quad\times\left((-1)^{k_2}\Li_{k_2+p_{\sigma(2)}}(x_{\sigma(2)})+(-1)^{p_2}\zeta_n\Big(k_2+p_{\sigma(2)};x_{\sigma(2)}^{-1}\Big)\right)\frac{(xx_1x_2)^nr_3^{(p_{\sigma(1)}-k_1-k_2-1)}(-n-1/2)}{(p_{\sigma(1)}-k_1-k_2-1)!}\nonumber\\
		&\quad+\sum_{0\le k_1+k_2\le q_2,\atop k_1,k_2\ge0}(x_1x_2)^{-1}\binom{k_1+p_1-1}{p_1-1}\binom{k_2+p_2-1}{p_2-1}\ti_{k_1+p_1}(x_1)\ti_{k_2+p_2}(x_2)\frac{(-1)^{k_1+k_2}R_3^{(q_2-k_1-k_2)}(0)}{(q_2-k_1-k_2)!}\nonumber\\
		&\quad+\sum_{0\le k_1+k_2+k_3\le q_2-1,\atop k_1,k_2,k_3\ge0}\left((-1)^{k_1}\Li_{k_1+1}(x)-\Li_{k_1+1}\Big(x^{-1}\Big)\right)\nonumber\\
		&\quad\quad\times\binom{k_2+p_1-1}{p_1-1}\binom{k_3+p_2-1}{p_2-1}(-1)^{k_2+k_3}\ti_{k_2+p_1}(x_1)\ti_{k_3+p_2}(x_2)\frac{(x_1x_2)^{-1}R_3^{(q_2-k_1-k_2-k_3-1)}(0)}{(q_2-k_1-k_2-k_3-1)!}
	\end{align}
	where $R_2(s)=(s+1/2)^{q_1}r_2(s)$ and $R_3(s)=s^{q_2}r_3(s)$.
\end{thm}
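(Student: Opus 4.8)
The plan is to prove both identities in parallel by the residue method of Lemma~\ref{lem-redisue-thm}, exactly along the lines of the proof of Theorem~\ref{thm-unlinearCES-2}; the present statement is the generalization in which the rational factor $1/s^q$ is replaced by a rational function whose pole is prescribed but otherwise arbitrary. For the first identity I would integrate
\[
h_{p_1,p_2,q_1}(x,x_1,x_2;s):=\frac{\Phi(s;x)\phi^{(p_1-1)}(s+1/2;x_1)\phi^{(p_2-1)}(s+1/2;x_2)}{(p_1-1)!(p_2-1)!}(-1)^{p_1+p_2}r_2(s)
\]
over the expanding circles, and for the second the same kernel with $r_2$ replaced by $r_3$; assuming $r_2,r_3$ decay fast enough that each integrand is $o(s^{-1})$ on the circles, Lemma~\ref{lem-redisue-thm} applies and the total sum of residues vanishes. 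The structural point is the location of the pole of the rational factor relative to the poles of the kernel: $\Phi(s;x)$ has simple poles at every integer, whereas $\phi^{(p_j-1)}(s+1/2;x_j)$ has a pole of order $p_j$ at every half-integer $-n-1/2$ $(n\ge0)$. Hence for $r_2$ the pole at $-1/2$ \emph{merges} with the half-integer pole of the kernel into a single pole of order $p_1+p_2+q_1$, leaving every integer pole simple; for $r_3$ the pole at $0$ merges with the simple integer pole of $\Phi$ into a pole of order $q_2+1$, leaving every half-integer pole of order $p_1+p_2$. This dichotomy is precisely what produces the two different shapes of the right-hand sides (the non-positive-integer residue sum reaching down to $n=0$ for $r_2$, versus the half-integer residue sum reaching down to $n=0$, i.e.\ to the point $-1/2$, for $r_3$).

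I would then record the residues at the generic poles. At a positive integer $n$ one reads off $\Res(\Phi,n)=x^{-n}$ from Lemma~\ref{lem-rui-xu-two} and the constant terms of the two kernel factors from the second expansion in Lemma~\ref{lem-extend-rui-xu-one}, giving the factor $(xx_1x_2)^{-n}(x_1x_2)^{-1}(\ti_{p_1}(x_1)-t_n(p_1;x_1))(\ti_{p_2}(x_2)-t_n(p_2;x_2))$ times $r(n)$; at a non-positive integer $-n$ the first expansion in Lemma~\ref{lem-extend-rui-xu-one} produces instead the factors $\ti_{p_j}(x_j)x_j^{-1}+(-1)^{p_j}t_n(p_j;x_j^{-1})$ times $r(-n)$. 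At a half-integer $-n-1/2$ that is \emph{not} the merged pole, the rational factor is holomorphic and is expanded in its Taylor series (whence the derivatives $r^{(\cdot)}(-n-1/2)$), $\Phi$ is expanded by Lemma~\ref{lem-extend-rui-xu-two}, and each $\phi^{(p_j-1)}(s+1/2;x_j)$ is expanded by the Laurent series of Lemma~\ref{lem-rui-xu-one} at the integer $-n$ (since $s+1/2=-n$ there); extracting the coefficient of $(s+n+1/2)^{-1}$ from this triple product yields the half-integer sums, including the symmetrized $\sum_{\sigma}$ term that records which $\phi$ factor supplies its principal part and which its regular part.

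The only genuinely new computation, and the step I expect to be the main obstacle, is the residue at the merged pole. For $r_2$ this is the residue at $-1/2$: writing $r_2(s)=R_2(s)/(s+1/2)^{q_1}$ with $R_2$ holomorphic there, I would expand $\Phi(s;x)$ about $s=-1/2$ by Lemma~\ref{lem-extend-rui-xu-two}, expand each $\phi^{(p_j-1)}(s+1/2;x_j)$ by the Laurent series of Lemma~\ref{lem-rui-xu-one} at $0$ (noting $\zeta_0=0$, so only the regular part carrying $\Li_{k+p_j}(x_j)$ and the principal part $(s+1/2)^{-p_j}$ survive), expand $R_2$ in its Taylor series, and take the coefficient of $(s+1/2)^{-1}$ in the product of these four Laurent/Taylor series. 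Sorting this convolution according to whether each $\phi$ factor contributes its pole or its regular part, and isolating the contribution of the regular part of $\Phi$, reproduces the four blocks of $R_2^{(\cdot)}(-1/2)/(\cdot)!$ terms with their nested constraints $k_1+k_2\le\cdots$ and binomial weights $\binom{k+p-1}{p-1}$. The computation for $r_3$ at $s=0$ is entirely analogous with $R_3(s)=s^{q_2}r_3(s)$: here the simple pole of $\Phi$ (expanded by Lemma~\ref{lem-rui-xu-two}) merges with the order-$q_2$ pole of $r_3$, the factors $\phi^{(p_j-1)}(s+1/2;x_j)$ are expanded by Lemma~\ref{lem-extend-rui-xu-one} at $0$ (where $t_0=0$ collapses their constant terms to $\ti_{k+p_j}(x_j)x_j^{-1}$), and one obtains the blocks of $R_3^{(\cdot)}(0)/(\cdot)!$ terms. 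Substituting all residues into $\sum_\alpha\Res=0$ and transferring the residues at $S_2\setminus\{-1/2\}$ (respectively $S_3\setminus\{0\}$) to the left-hand side then completes the proof; the delicate part is purely the bookkeeping of the merged-pole convolution, keeping the symmetrization and the several layers of summation indices consistent, while every other residue is a direct transcription of the preliminary expansions.
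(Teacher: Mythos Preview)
Your proposal is correct and follows essentially the same approach as the paper: the paper also considers the contour integrals of $\Phi(s;x)\phi^{(p_1-1)}(s+1/2;x_1)\phi^{(p_2-1)}(s+1/2;x_2)r_j(s)$, identifies exactly the pole structure you describe (simple poles at integers, order-$(p_1+p_2)$ poles at generic half-integers, and the merged pole at $-1/2$ or $0$), computes each residue using Lemmas~\ref{lem-rui-xu-one}--\ref{lem-extend-rui-xu-two} precisely as you indicate, and then applies Lemma~\ref{lem-redisue-thm}. Your description of the merged-pole computation and the reason for the differing summation ranges in the two cases matches the paper's argument.
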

\begin{proof}First, considering the residue calculations of the following contour integral:
	\begin{align*}
		\oint\limits_{\left( \infty  \right)} g_{p_1,p_2,q_1}(x,x_1,x_2;s)ds:= \oint\limits_{\left( \infty  \right)} \frac{\Phi(s;x)\phi^{(p_1-1)}(s+1/2;x_1)\phi^{(p_2-1)}(s+1/2;x_2)}{(p_1-1)!(p_2-1)!}r_2(s) (-1)^{p_1+p_2}ds=0.
	\end{align*}
	The integrand $g_{p_1,p_2,q_1}(x,x_1,x_2;s)$ has the following poles throughout the complex plane: All integers (simple poles), $-(n+1/2)$ (for positive integer $n$, poles of order $p_1+p_2$) and $-1/2$ (pole of order $p_1+p_2+q_1$). By direct calculations, we deduce the
	following residues:
	\begin{align*}
		&\Res\left(g_{p_1,p_2,q_1}(x,x_1,x_2;s),n\right)\quad(n\ge1)\\
		&=(xx_1x_2)^{-n}(x_1x_2)^{-1}\left(\ti_{p_1}(x_1)-t_{n}(p_1;x_1)\right)\left(\ti_{p_2}(x_2)-t_{n}(p_2;x_2)\right)r_2(n),\\
		&\Res\left(g_{p_1,p_2,q_1}(x,x_1,x_2;s),-n\right)\quad(n\ge0)\\
		&=(xx_1x_2)^n\left(\ti_{p_1}(x_1)x_1^{-1}+(-1)^{p_1}t_{n}\Big(p_1;x_1^{-1}\Big)\right)\left(\ti_{p_2}(x_2)x_2^{-1}+(-1)^{p_2}t_{n}\Big(p_2;x_2^{-1}\Big)\right)r_2(-n),\\
		&\Res\left(g_{p_1,p_2,q_1}(x,x_1,x_2;s),-n-1/2\right)\quad(n\ge1)\\
		&=\sum_{k=0}^{p_1+p_2-1}\left((-1)^k\ti_{k+1}(x)-x\ti_{k+1}\Big(x^{-1}\Big)\right)\frac{(xx_1x_2)^nr_2^{(p_1+p_2-k-1)}(-n-1/2)}{(p_1+p_2-k-1)!}\nonumber\\
		&\quad+\sum_{0\le k_1+k_2\le p_2-1,\atop k_1,k_2\ge0}\left((-1)^{k_1}\ti_{k_1+1}(x)-x\ti_{k_1+1}\Big(x^{-1}\Big)\right)\binom{k_2+p_1-1}{p_1-1}\nonumber\\
		&\quad\quad\times\left((-1)^{k_2}\Li_{k_2+p_1}(x_1)+(-1)^{p_1}\zeta_n\Big(k_2+p_1;x_1^{-1}\Big)\right)\frac{(xx_1x_2)^nr_2^{(p_2-k_1-k_2-1)}(-n-1/2)}{(p_2-k_1-k_2-1)!}\nonumber\\
		&\quad+\sum_{0\le k_1+k_2\le p_1-1,\atop k_1,k_2\ge0}\left((-1)^{k_1}\ti_{k_1+1}(x)-x\ti_{k_1+1}\Big(x^{-1}\Big)\right)\binom{k_2+p_2-1}{p_2-1}\nonumber\\
		&\quad\quad\times\left((-1)^{k_2}\Li_{k_2+p_2}(x_2)+(-1)^{p_2}\zeta_n\Big(k_2+p_2;x_2^{-1}\Big)\right)\frac{(xx_1x_2)^nr_2^{(p_1-k_1-k_2-1)}(-n-1/2)}{(p_1-k_1-k_2-1)!},\\
		&\Res\left(g_{p_1,p_2,q_1}(x,x_1,x_2;s),-1/2\right)\\
		&=\sum_{k=0}^{p_1+p_2+q_1-1}\left((-1)^k\ti_{k+1}(x)-x\ti_{k+1}\Big(x^{-1}\Big)\right)\frac{R_2^{(p_1+p_2+q_1-k-1)}(-1/2)}{(p_1+p_2+q_1-k-1)!}\nonumber\\
		&\quad+\sum_{0\le k_1+k_2\le p_2+q_1-1,\atop k_1,k_2\ge0}\left((-1)^{k_1}\ti_{k_1+1}(x)-x\ti_{k_1+1}\Big(x^{-1}\Big)\right)\nonumber\\
		&\quad\quad\times\binom{k_2+p_1-1}{p_1-1}(-1)^{k_2}\Li_{k_2+p_1}(x_1)\frac{R_2^{(p_2+q_1-k_1-k_2-1)}(-1/2)}{(p_2+q_1-k_1-k_2-1)!}\nonumber\\
		&\quad+\sum_{0\le k_1+k_2\le p_1+q_1-1,\atop k_1,k_2\ge0}\left((-1)^{k_1}\ti_{k_1+1}(x)-x\ti_{k_1+1}\Big(x^{-1}\Big)\right)\nonumber\\
		&\quad\quad\times\binom{k_2+p_2-1}{p_2-1}(-1)^{k_2}\Li_{k_2+p_2}(x_2)\frac{R_2^{(p_1+q_1-k_1-k_2-1)}(-1/2)}{(p_1+q_1-k_1-k_2-1)!}\nonumber\\
		&\quad+\sum_{0\le k_1+k_2+k_3\le q_1-1,\atop k_1,k_2,k_3\ge0}\left((-1)^{k_1}\ti_{k_1+1}(x)-x\ti_{k_1+1}\Big(x^{-1}\Big)\right)\nonumber\\
		&\quad\quad\times\binom{k_2+p_1-1}{p_1-1}\binom{k_3+p_2-1}{p_2-1}(-1)^{k_2+k_3}\Li_{k_2+p_1}(x_1)\Li_{k_3+p_2}(x_2)\frac{R_2^{(q_1-k_1-k_2-k_3-1)}(-1/2)}{(q_1-k_1-k_2-k_3-1)!}.
	\end{align*}
	From Lemma \ref{lem-redisue-thm}, we know that
	\begin{align*}
		&\Res\left(g_{p_1,p_2,q_1}(x,x_1,x_2;s),-1/2\right)+\sum_{n=1}^{\infty}\Res\left(g_{p_1,p_2,q_1}(x,x_1,x_2;s),n\right)\\&+\sum_{n=1}^{\infty}\Res\left(g_{p_1,p_2,q_1}(x,x_1,x_2;s),-n-1/2\right)+\sum_{n=0}^{\infty}\Res\left(g_{p_1,p_2,q_1}(x,x_1,x_2;s),-n\right)\\&+\sum_{\alpha\in S_2\backslash\left\{-1/2\right\}}\Res\left(g_{p_1,p_2,q_1}(x,x_1,x_2;s),\alpha\right)=0.
	\end{align*}
	Finally, combining these contributions yields the result.\\
	Considering the residue calculations of the following contour integral:
	\begin{align*}
	\oint\limits_{\left( \infty  \right)} g_{p_1,p_2,q_2}(x,x_1,x_2;s)ds:= \oint\limits_{\left( \infty  \right)} \frac{\Phi(s;x)\phi^{(p_1-1)}(s+1/2;x_1)\phi^{(p_2-1)}(s+1/2;x_2)}{(p_1-1)!(p_2-1)!}r_3(s) (-1)^{p_1+p_2}ds=0.
	\end{align*}
		The integrand $g_{p_1,p_2,q_2}(x,x_1,x_2;s)$ has the following poles throughout the complex plane: All integers (simple poles), $-(n+1/2)$ (for positive integer $n$, poles of order $p_1+p_2$) and $0$ (pole of order $q_2+1$). By direct calculations, we deduce the following residues:
	\begin{align*}
		&\Res\left(g_{p_1,p_2,q_2}(x,x_1,x_2;s),n\right)\quad(n\ge1)\\
		&=(xx_1x_2)^{-n}(x_1x_2)^{-1}\left(\ti_{p_1}(x_1)-t_{n}(p_1;x_1)\right)\left(\ti_{p_2}(x_2)-t_{n}(p_2;x_2)\right)r_3(n),\\
		&\Res\left(g_{p_1,p_2,q_2}(x,x_1,x_2;s),-n\right)\quad(n\ge1)\\
		&=(xx_1x_2)^n\left(\ti_{p_1}(x_1)x_1^{-1}+(-1)^{p_1}t_{n}\Big(p_1;x_1^{-1}\Big)\right)\left(\ti_{p_2}(x_2)x_2^{-1}+(-1)^{p_2}t_{n}\Big(p_2;x_2^{-1}\Big)\right)r_3(-n),\\
		&\Res\left(g_{p_1,p_2,q_2}(x,x_1,x_2;s),-n-1/2\right)\quad(n\ge0)\\
		&=\sum_{k=0}^{p_1+p_2-1}\left((-1)^k\ti_{k+1}(x)-x\ti_{k+1}\Big(x^{-1}\Big)\right)\frac{(xx_1x_2)^nr_3^{(p_1+p_2-k-1)}(-n-1/2)}{(p_1+p_2-k-1)!}\nonumber\\
		&\quad+\sum_{0\le k_1+k_2\le p_2-1,\atop k_1,k_2\ge0}\left((-1)^{k_1}\ti_{k_1+1}(x)-x\ti_{k_1+1}\Big(x^{-1}\Big)\right)\binom{k_2+p_1-1}{p_1-1}\nonumber\\
		&\quad\quad\times\left((-1)^{k_2}\Li_{k_2+p_1}(x_1)+(-1)^{p_1}\zeta_n\Big(k_2+p_1;x_1^{-1}\Big)\right)\frac{(xx_1x_2)^nr_3^{(p_2-k_1-k_2-1)}(-n-1/2)}{(p_2-k_1-k_2-1)!}\nonumber\\
		&\quad+\sum_{0\le k_1+k_2\le p_1-1,\atop k_1,k_2\ge0}\left((-1)^{k_1}\ti_{k_1+1}(x)-x\ti_{k_1+1}\Big(x^{-1}\Big)\right)\binom{k_2+p_2-1}{p_2-1}\nonumber\\
		&\quad\quad\times\left((-1)^{k_2}\Li_{k_2+p_2}(x_2)+(-1)^{p_2}\zeta_n\Big(k_2+p_2;x_2^{-1}\Big)\right)\frac{(xx_1x_2)^nr_3^{(p_1-k_1-k_2-1)}(-n-1/2)}{(p_1-k_1-k_2-1)!},\\
		&\Res\left(g_{p_1,p_2,q_2}(x,x_1,x_2;s),0\right)\\
		&=\sum_{0\le k_1+k_2\le q_2,\atop k_1,k_2\ge0}(x_1x_2)^{-1}\binom{k_1+p_1-1}{p_1-1}\binom{k_2+p_2-1}{p_2-1}\ti_{k_1+p_1}(x_1)\ti_{k_2+p_2}(x_2)\frac{(-1)^{k_1+k_2}R_3^{(q_2-k_1-k_2)}(0)}{(q_2-k_1-k_2)!}\nonumber\\
		&\quad+\sum_{0\le k_1+k_2+k_3\le q_2-1,\atop k_1,k_2,k_3\ge0}\left((-1)^{k_1}\Li_{k_1+1}(x)-\Li_{k_1+1}\Big(x^{-1}\Big)\right)\nonumber\\
		&\quad\quad\times\binom{k_2+p_1-1}{p_1-1}\binom{k_3+p_2-1}{p_2-1}(-1)^{k_2+k_3}\ti_{k_2+p_1}(x_1)\ti_{k_3+p_2}(x_2)\frac{(x_1x_2)^{-1}R_3^{(q_2-k_1-k_2-k_3-1)}(0)}{(q_2-k_1-k_2-k_3-1)!}.
	\end{align*}
	From Lemma \ref{lem-redisue-thm}, we know that
	\begin{align*}
		&\Res\left(g_{p_1,p_2,q_2}(x,x_1,x_2;s),0\right)+\sum_{n=1}^{\infty}\Res\left(g_{p_1,p_2,q_2}(x,x_1,x_2;s),n\right)\\&+\sum_{n=0}^{\infty}\Res\left(g_{p_1,p_2,q_2}(x,x_1,x_2;s),-n-1/2\right)+\sum_{n=1}^{\infty}\Res\left(g_{p_1,p_2,q_2}(x,x_1,x_2;s),-n\right)\\&+\sum_{\alpha\in S_3\backslash\left\{0\right\}}\Res\left(g_{p_1,p_2,q_2}(x,x_1,x_2;s),\alpha\right)=0.
	\end{align*}
	Finally, combining these contributions yields the result.
\end{proof}

Clearly, the results presented in Sections 3 and 4 can be derived in this section by assigning specific values to the rational functions discussed here. Similarly, assigning other values to these rational functions may yield additional results on cyclotomic Euler-type sums, which we leave to interested readers.

\medskip

{\bf Declaration of competing interest.}
The author declares that he has no known competing financial interests or personal relationships that could have
appeared to influence the work reported in this paper.

{\bf Data availability.}
No data was used for the research described in the article.

{\bf Acknowledgments.} Ce Xu is supported by the General Program of Natural Science Foundation of Anhui Province (Grant No. 2508085MA014). Ce Xu gratefully acknowledges the invitation from Professor Chengming Bai of Nankai University to the Chern Institute of Mathematics and from Professor Shaoyun Yi of Xiamen University to the Tianyuan Mathematical Center in Southeast China (TMSE). This work commenced during these visits. Ce Xu also extend his sincere gratitude to Dr. Hongyuan Rui from Sichuan University for his valuable suggestions, which have significantly improved the quality of this paper.


\begin{thebibliography}{99}

\bibitem{AC2017}
H. Alzer and J. Choi, The Riemann zeta function and classes of infinite series, \emph{Appl. Anal. Discrete Math.} \textbf{11}(2017), pp.\ 386-398.
		
\bibitem{AC2020}
H. Alzer and J. Choi, Four parametric linear Euler sums, \emph{J. Math. Anal. Appl.} \textbf{484}(1)(2020), 0022-247X.

\bibitem{A2000}
G.E.\ Andrews, R.\ Askey and R.\ Roy, \emph{Special functions},
Cambridge University Press, 2000, pp.\ 11-13.

\bibitem{ChenEie2006}
K-W. Chen and M. Eie, Explicit evaluations of extended Euler sums, \emph{J. Number Theory} \textbf{117}(2006), 31--52.

\bibitem{DilBoyadzhiev2015}
A. Dil and K.N. Boyadzhiev, Euler sums of hyperharmonic numbers, \emph{J. Number Theory} \textbf{147}(2015), 490--498.

\bibitem{Euler1776}
L. Euler, Meditationes circa singulare serierum genus, Novi Comm. Acad. Sci. Petropol 20(1776), 140-186, reprinted in Opera Omnia ser. I, vol. 15, B. G. Teubner,
Berlin (1927), 217-267.

\bibitem{Flajolet-Salvy}
P. Flajolet and B. Salvy, Euler sums and contour integral representations, \emph{Experiment. Math.} \textbf{7}(1)(1998), pp.\  15-35.

\bibitem{Hirose2025}
M. Hirose, An explicit parity theorem for multiple zeta values via multitangent functions, \emph{Ramanujan J.} (2025)67:87.

\bibitem{H1992}
M.E. Hoffman, Multiple harmonic series, \emph{Pacific J.\ Math.} \textbf{152}(1992), pp.\ 275--290.

\bibitem{H2019}
M.E. Hoffman, An odd variant of multiple zeta values, \emph{Comm. Number Theory Phys.} \textbf{13}(2019), 529--567.

\bibitem{KanekoTs2019}
M. Kaneko and H. Tsumura, On multiple zeta values of level two, \emph{Tsukuba J.Math.} \textbf{44-2}(2020), pp.\ 213--234.

\bibitem{KomatsuLuca2025}
T. Komatsu and F. Luca, Some explicit forms of special values of an alternating $q$-multiple $t$-function of general level at roots of unity, \emph{Ramanujan J.} (2025)68:3.

\bibitem{LiSong2023}
Z. Li and Y. Song, Ohno-Zagier type relations for multiple $t$-values, \emph{Bull. Aust. Math. Soc.} \textbf{107}(2)(2023), 215--226.

\bibitem{LiYan2023}
Z. Li and L. Yan, Generating functions of multiple $t$-star values, \emph{Quaest. Math.} {\bf 47}(5)(2023), pp.\ 1095-1117.

\bibitem{LiYan2025}
Z. Li and L. Yan, Generating functions of multiple $t$-star values of general level, \emph{Adv. Appl. Math.} {\bf 165}(2025), 102853.

\bibitem{MoZhou2025}
F. Mo and J. Zhou, Euler-type sums involving odd harmonic numbers and binomial coefficients, \emph{Integral Transforms Spec. Funct.} \textbf{36}(7)(2025), pp.\ 530-556.

\bibitem{Nielsen1906}
N. Nielsen, Handbuch der Theorie der Gammafunktion and Theorie des Integrallogarithmua und ueruumdier Transzendenten, 1906. Reprinted together as Die Gammafunktion, Chelsea, New York,
1965.

\bibitem{Panzer2017}
E. Panzer, The parity theorem for multiple polylogarithms, \emph{J.\ Number Theory} \textbf{172}(2017), pp.\ 93--113.

\bibitem{Rui-Xu2025}
H. Rui and C. Xu, Contour integrations and parity results of cyclotomic Euler sums and multiple polylogarithm function, arXiv:2509.00638.

\bibitem{SiXin2021}
X. Si, Euler-type sums involving multiple harmonic sums and binomial coefficients, \emph{Open Math.} \textbf{19}(2021), pp. 1612--1619.

\bibitem{Sofo2018}
A. Sofo, General order Euler sums with multiple argument, \emph{J. Number Theory} \textbf{189}(2018), 255--271.

\bibitem{Umezawa2025arxiv}
R. Umezawa, An explicit parity theorem for multiple polylogarithms, arXiv:2508.02040.

\bibitem{WangLyu2018}
W. Wang and Y. Lyu, Euler sums and Stirling sums, \emph{J. Number Theory} \textbf{185}(2018), 160--193.

\bibitem{Wang-Xu2025}
Z. Wang and C. Xu, Contour integrations and parity results of
cyclotomic Euler $T$-sums and multiple $t$-values, arXiv:2509.06706.

\bibitem{Xu-Wang2020}
C. Xu and W. Wang, Explicit formulas of Euler sums via multiple zeta values, \emph{J. Symb. Comput.} \textbf{101}(2020), pp.\ 109-127.

\bibitem{Xu-Wang2022}
C. Xu and W. Wang, Two variants of Euler sums, \emph{Monatsh. Math.} \textbf{199}(2022), pp.\ 431-454.

\bibitem{Xu-Wang2023}
C. Xu and W. Wang, Dirichlet type extensions of Euler sums, \emph{Comptes Rendus Math.}, \textbf{361}(2023), 979-1010.

\bibitem{YuanZh2014a}
H.\ Yuan and J.\ Zhao, Double shuffle relations of double zeta values
and double Eisenstein series of level $N$,
\emph{J.\ London Math.\ Soc.}  \textbf{92}(2)(2015), pp. \ 520--546.

\bibitem{DZ1994}
D. Zagier, Values of zeta functions and their applications, First European Congress
of Mathematics, Volume II, Birkhauser, Boston, \textbf{120}(1994), pp.\ 497--512.

\bibitem{Zhao2007d}
J. Zhao, Analytic continuation of multiple polylogarithms, \emph{Anal.\ Math.} \textbf{33}(2007), pp.\ 301--323.

\bibitem{Zhao2010}
J. Zhao, Linear relations of special values of multiple polylogarithms at roots of unity, \emph{Doc. Math.} \textbf{15}(2010), pp.\ 1--34.

\bibitem{Zhao2015}
J. Zhao, Sum formula of multiple Hurwitz-zeta values, \emph{Forum Math.} \textbf{27}(2)(2015), pp.\  929--936.

\bibitem{Z2016}
J. Zhao, \emph{Multiple zeta functions, multiple polylogarithms and their special values}, Series on Number
Theory and its Applications, Vol.~12, World Scientific Publishing Co. Pte. Ltd., Hackensack, NJ, 2016.

\bibitem{Z2024}
J. Zhao, Weighted and restricted sum formulas of Euler sums, \emph{Res. Number Theory} \textbf{10}(4)(2024), Paper No. 87, 40 pp.

\bibitem{ZhengYang2025}
W. Zheng and Y. Yang, Evaluations of sums involving odd harmonic numbers, \emph{Anal. Math.} \textbf{50}(2024), pp.\ 323-334. 		
	
\bibitem{ZhengYang2025(1)}
W. Zheng and Y. Yang, Multiple hyperharmonic-star numbers, \emph{Integral Transforms Spec. Funct.} \textbf{36}(7)(2025), pp.\ 503-512.

\end{thebibliography}
\end{document}